\theoremstyle{plain}
\newtheorem{thm}{Theorem}[section]
\newtheorem{pro}[thm]{Proposition}
\newtheorem{lem}[thm]{Lemma}
\newtheorem{cor}[thm]{Corollary}
\newtheorem{theoalph}{Theorem}
\theoremstyle{definition}
\newtheorem{defi}[thm]{Definition}
\newtheorem{rem}[thm]{Remark}
\def\og{\leavevmode\raise.3ex\hbox{$\scriptscriptstyle\langle\!\langle$~}}
\def\fg{\leavevmode\raise.3ex\hbox{~$\!\scriptscriptstyle\,\rangle\!\rangle$}}
\numberwithin{equation}{section}       
\begin{document}
\selectlanguage{english}

\title[Action of the Cremona group on foliations on $\mathbb{P}^2_\mathbb{C}$: some curious facts]{Action of the Cremona group on foliations on $\mathbb{P}^2_\mathbb{C}$: \\ some curious facts}
\thanks{Second author supported by the Swiss National Science Foundation grant no 
PP00P2\_128422 /1 and by ANR Grant "BirPol"  ANR-11-JS01-004-01.}

\author{Dominique \textsc{Cerveau}}

\address{Membre de l'Institut Universitaire de France.
IRMAR, UMR 6625 du CNRS, Universit\'e de Rennes $1$, $35042$ Rennes, France.}
\email{dominique.cerveau@univ-rennes1.fr}

\author{Julie \textsc{D\'eserti}}
\address{IMJ-PRG, UMR $7586$, Universit\'e 
Paris $7$, B\^atiment Sophie Germain, Case $7012$, $75205$ Paris Cedex 13, France.}
\email{julie.deserti@imj-prg.fr}

\maketitle

\begin{abstract}
The Cremona group of birational transformations of $\mathbb{P}^2_\mathbb{C}$ acts on the space $\mathbb{F}(2)$ of holomorphic foliations on the complex projective plane. Since this action is not compatible with the natural graduation of $\mathbb{F}(2)$ by the degree, its description is complicated. The fixed points of the action are essentially described by Cantat-Favre in \cite{CF}. In that paper we are interested in problems of "aberration of the degree" that is pairs $(\phi,\mathcal{F})\in\mathrm{Bir}(\mathbb{P}^2_\mathbb{C})\times\mathbb{F}(2)$ for which $\deg\phi^*\mathcal{F}<(\deg\mathcal{F}+1)\deg\phi+\deg\phi-2$, the generic degree of such pull-back. We introduce the notion of numerical invariance ($\deg\phi^*\mathcal{F}=\deg\mathcal{F}$) and relate it in small degrees to the existence of transversal structure for the considered foliations.

\noindent{\it 2010 Mathematics Subject Classification. --- 14E07, 37F75}
\end{abstract}

\section{Introduction}

Let us consider on the complex projective plane $\mathbb{P}^2_\mathbb{C}$ a foliation $\mathcal{F}$ of degree $d$ and a birational map $\phi$ of degree $k$. If the pair $(\mathcal{F},\phi)$ is generic then 
\[
\deg\phi^*\mathcal{F}=(d+1)k+k-2. 
\]
For example if $\mathcal{F}$ and $\phi$ are both of degree $2$, then $\phi^*\mathcal{F}$ is of degree $6$. Nevertheless one has the following statement which says that "aberration of the degree" is not exceptional:

\begin{theoalph}\label{Thm:main}
For any foliation $\mathcal{F}$ of degree $2$ on $\mathbb{P}^2_\mathbb{C}$ there exists a quadratic birational map $\psi$ of~$\mathbb{P}^2_\mathbb{C}$ such that $\deg\psi^*\mathcal{F}\leq 3.$
\end{theoalph}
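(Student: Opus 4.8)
The strategy is to read off $\deg\psi^{*}\mathcal{F}$ from the tangencies of $\mathcal{F}$ with conics, and then to spend the base points of $\psi$ at singular points of $\mathcal{F}$.

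Let $\psi$ be a quadratic birational map, let $q_{1},q_{2},q_{3}$ be the base points of $\psi^{-1}$, and assume first that they are three distinct points in general position. For a general line $L$ the image $C=\psi(L)$ is a general conic through $q_{1},q_{2},q_{3}$, and $\psi$ induces an isomorphism between $L$ minus the three points where it meets the lines contracted by $\psi$ and $C\setminus\{q_{1},q_{2},q_{3}\}$. Those three points of $L$ are ordinary points of $\psi^{*}\mathcal{F}$, whether or not the contracted lines become $\psi^{*}\mathcal{F}$-invariant, so the tangency points of $\psi^{*}\mathcal{F}$ with $L$ correspond exactly, with multiplicities, to the tangency points of $\mathcal{F}$ with $C$ lying off $\{q_{1},q_{2},q_{3}\}$. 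Since $\mathcal{F}$ has degree $2$ its canonical bundle is $\mathcal{O}(1)$, so for any smooth conic $C$ not invariant by $\mathcal{F}$ the standard tangency formula gives $\mathrm{tang}(\mathcal{F},C)=K_{\mathcal{F}}\cdot C+C\cdot C=2+4=6$, the generic pull-back degree; hence
$$\deg\psi^{*}\mathcal{F}=6-\sum_{k=1}^{3}\mathrm{tang}_{q_{k}}(\mathcal{F},C).$$
I would first double-check this identity on two explicit examples --- a radial foliation and a linear foliation with three well-placed singular points --- to be sure of the conventions.

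The key elementary point is that if $q\in\mathrm{Sing}(\mathcal{F})$ and $C$ is smooth at $q$ then $\mathrm{tang}_{q}(\mathcal{F},C)\geq 1$, because a local $1$-form defining $\mathcal{F}$ vanishes at $q$, hence so does its restriction to $C$. Consequently, if $\mathcal{F}$ has three non-collinear singular points $q_{1},q_{2},q_{3}$ and $\psi$ is the quadratic involution with base locus $\{q_{1},q_{2},q_{3}\}$, then $\deg\psi^{*}\mathcal{F}\leq 6-3=3$. As a degree $2$ foliation generically has seven distinct singular points in general position, this disposes of the typical case at once. The only obstruction to three non-collinear singularities is that every singular point of $\mathcal{F}$ lie on a single line $\ell$, which happens only for very special $\mathcal{F}$: if $\ell$ carried three or more singular points it would have to be $\mathcal{F}$-invariant (a non-invariant line carries tangency exactly $d=2$, hence at most two singular points), and an invariant line carries at most $d+1=3$ singular points.

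The all-singularities-collinear configuration is where I expect the real difficulty. There $\psi$ cannot have three proper base points at singular points --- collinear base points never yield a quadratic map --- so instead I would use a quadratic map whose base locus is concentrated at, and infinitely near, the singular points of $\mathcal{F}$, the infinitely near points being chosen along the local separatrices of $\mathcal{F}$ (or along the branches of the tangent cone, when a singularity has algebraic multiplicity $\geq 2$). Forcing the conic $C=\psi(L)$ to osculate such a local invariant structure makes the corresponding tangency $\mathrm{tang}_{q_{k}}(\mathcal{F},C)$ --- now computed with its infinitely near contributions, in the appropriate refinement of the displayed formula --- grow; with the right choice one gets $\sum_{k}\mathrm{tang}_{q_{k}}(\mathcal{F},C)\geq 3$, so that $\deg\psi^{*}\mathcal{F}\leq 3$ once more. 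Carrying this out requires the short list of possible local types of a singular point of a degree $2$ foliation absorbing (most of) the length-$7$ singular scheme, together with the bookkeeping of infinitely near tangencies; that bookkeeping is, I expect, the only delicate part, the generic case being immediate.
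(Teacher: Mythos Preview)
Your approach is genuinely different from the paper's and, in the generic case, more conceptual. The paper proceeds by explicit computation throughout: for a foliation with at least two singular points it normalizes two of them to $(1:0:0)$ and $(0:1:0)$ and writes down an explicit $\psi\in\mathscr{O}(\rho)$ (not $\mathscr{O}(\sigma)$) depending on the coefficients of $\omega$, then checks $\psi^{*}\omega=yz^{2}\omega'$ with $\omega'$ of degree~$4$; for foliations with exactly one singular point it invokes the classification of \cite{CDGBM} (four normal forms $\Omega_{1},\ldots,\Omega_{4}$) and exhibits for each an explicit quadratic map, in $\mathscr{O}(\rho)$ or $\mathscr{O}(\tau)$. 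Your tangency formula $\deg\psi^{*}\mathcal{F}=6-\sum_{k}\mathrm{tang}_{q_{k}}(\mathcal{F},C)$ and the observation that any singular point forces $\mathrm{tang}_{q_{k}}\geq 1$ dispose of the three-non-collinear case in one stroke, and this is cleaner than anything in the paper.

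However, as written the proposal has a real gap: everything after ``The all-singularities-collinear configuration is where I expect the real difficulty'' is a plan, not a proof. You say what you \emph{would} do (base points infinitely near, chosen along separatrices or tangent cones) and that it ``requires the short list of possible local types \ldots\ together with the bookkeeping of infinitely near tangencies,'' but none of that bookkeeping is carried out. This is not a minor omission: your split leaves for the degenerate bin every foliation with one or two singular points, and also those with three collinear ones, and the refined tangency formula for $\rho$- and $\tau$-type maps (infinitely near base points) is never stated, let alone verified to yield $\sum\mathrm{tang}\geq 3$ in each local model. The paper's experience suggests this is exactly where the work lies --- for the one-singularity case it resorts to an external classification theorem and four separate explicit checks, and for $\Omega_{4}$ the map that works is in $\mathscr{O}(\tau)$ and gives degree exactly~$3$. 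If you want to complete your argument along these lines, you will need either to carry out that case analysis in full (including the correct version of the displayed identity when base points collide) or, as the paper does, to fall back on explicit computation for the finitely many remaining normal forms.
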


Holomorphic singular foliations on compact complex projective surfaces have been classified up to birational equivalence by Brunella, McQuillan and Mendes (\cite{Brunella}). Let $\mathcal{F}$ be a holomorphic singular foliation on a compact complex projective surface $S$. Let $\mathrm{Bir}(\mathcal{F})$ (resp. $\mathrm{Aut}(\mathcal{F})$) denote the group of birational (resp. biholomorphic) maps of $S$ that send leaf to leaf. If $\mathcal{F}$ is of general type, then $\mathrm{Bir}(\mathcal{F})=\mathrm{Aut}(\mathcal{F})$ is a finite group. In \cite{CF} Cantat and Favre classify the pairs $(S,\mathcal{F})$ for which $\mathrm{Bir}(\mathcal{F})$ (resp. $\mathrm{Aut}(\mathcal{F})$) is infinite; in the case of $\mathbb{P}^2_\mathbb{C}$ such foliations are given by closed rational $1$-forms.

In this article we introduce a weaker notion: the numerical invariance. We consider on $\mathbb{P}^2_\mathbb{C}$ a pair $(\mathcal{F},\phi)$ of a foliation $\mathcal{F}$ of degree $d$ and a birational map $\phi$ of degree $k\geq 2$. The foliation $\mathcal{F}$ is \textbf{\textit{numerically invariant}} under the action of $\phi$ if $\deg\phi^*\mathcal{F}=\deg\mathcal{F}$. We characterize such pairs $(\mathcal{F},\phi)$ with $\deg\mathcal{F}=\deg\phi=2$ which is the first degree with deep (algebraic and dynamical) phenomena, both for foliations and birational maps. We prove that a numerically invariant foliation under the action of a generic quadratic map is special:

\begin{theoalph}\label{Thm:gen}
Let $\mathcal{F}$ be a foliation of degree $2$ on $\mathbb{P}^2_\mathbb{C}$ numerically invariant under the action of a generic quadratic birational map of $\mathbb{P}^2_\mathbb{C}$. Then $\mathcal{F}$ is transversely projective.
\end{theoalph}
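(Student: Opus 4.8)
The plan is to convert the hypothesis into a purely local statement about $\mathcal{F}$ at one point, and then to produce a transverse affine structure by hand. First, since transverse projectivity is a birational invariant and every quadratic birational map of generic type is of the form $\phi=A\circ\sigma\circ B$ with $A,B\in\mathrm{PGL}_3(\mathbb{C})$ and $\sigma\colon[x_0:x_1:x_2]\dashrightarrow[x_1x_2:x_0x_2:x_0x_1]$ the standard Cremona involution, one has $\deg\phi^{*}\mathcal{F}=\deg\sigma^{*}(A^{*}\mathcal{F})$, and $A^{*}\mathcal{F}$ is again a degree-$2$ foliation whose algebraic multiplicity at the base point $e_i$ of $\sigma$ equals that of $\mathcal{F}$ at $A(e_i)$. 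So what I really need is the degree formula
$$\deg\sigma^{*}\mathcal{G}=6-\bigl(\nu_0+\nu_1+\nu_2\bigr),$$
$\nu_i$ being the algebraic multiplicity at $e_i$ of a degree-$2$ foliation $\mathcal{G}$ given by $\omega=\sum a_i\,dx_i$. I would obtain it by expanding $\sigma^{*}\omega$, whose coefficients have degree $7$: its greatest common divisor is supported on the three contracted lines $\{x_i=0\}$, and one checks $\mathrm{ord}_{x_i}(\sigma^{*}\omega)=\nu_i$, using that $\nu_i\le 2$ always and that the cancellations which could make this order larger would force $\omega$ to be reducible. The clean way to organise this, and to be sure no further drop occurs, is through the resolution of $\sigma$ together with the blow-up rule $K_{\widetilde{\mathcal{F}}}=\pi^{*}K_{\mathcal{F}}+(\nu-1)E$ for a non-dicritical singularity, noting that a multiplicity-$2$ singularity of a genuine degree-$2$ foliation is automatically non-dicritical.

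By the formula, numerical invariance forces $\sum_i\nu_{A(e_i)}(\mathcal{F})=4$; since every algebraic multiplicity is $\le 2$, this requires either two of these to equal $2$, or one to equal $2$ and two to equal $1$. The first case is impossible: if a degree-$2$ foliation had algebraic multiplicity $2$ at two distinct points $p\ne q$, then, normalising $p=e_0$, Euler's relation forces $\omega=b_1(x_1,x_2)(x_0dx_1-x_1dx_0)+b_2(x_1,x_2)(x_0dx_2-x_2dx_0)+g(x_1,x_2)(x_2dx_1-x_1dx_2)$ with $b_1,b_2,g$ binary quadratic forms, and imposing the same condition at $q=e_1$ makes $b_1,b_2,g$ all divisible by $x_2$, whence $x_2\mid\omega$ — a contradiction. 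Therefore the hypothesis implies that $\mathcal{F}$ has a singular point of algebraic multiplicity exactly $2$.

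It then remains to show that any degree-$2$ foliation with a multiplicity-$2$ singularity is transversely affine. Normalising that singularity to $e_0$, so $\omega$ has the form above, I pass to the affine chart $x_1=1$ with coordinates $(\lambda,\mu)=(x_2/x_1,\,x_0/x_1)$, where $\lambda$ is a coordinate on the pencil of lines through $e_0$. A short substitution shows $\mathcal{F}$ is then given by
$$\omega'=\bigl(\mu\,\beta_2(\lambda)-\gamma(\lambda)\bigr)\,d\lambda-\bigl(\beta_1(\lambda)+\lambda\,\beta_2(\lambda)\bigr)\,d\mu,\qquad \beta_i=b_i(1,\lambda),\ \ \gamma=g(1,\lambda),$$
that is, by a linear non-homogeneous differential equation $d\mu/d\lambda=P(\lambda)\mu+Q(\lambda)$ with $P,Q$ rational — a Riccati equation with no quadratic term. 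Equivalently, one checks that the closed rational $1$-form $\alpha=\rho(\lambda)\,d\lambda$ with $\rho=\dfrac{(\beta_1+\lambda\beta_2)'+\beta_2}{\beta_1+\lambda\beta_2}$ satisfies $d\omega'=\alpha\wedge\omega'$, which is exactly a transverse affine structure for $\mathcal{F}$; the degenerate cases $\beta_1+\lambda\beta_2\equiv 0$ (which would make $\mathcal{F}$ of degree $0$ and is therefore excluded) and $\beta_2\equiv 0$ (which makes $\omega'$ separable) give only simpler such structures. A transversely affine foliation is a fortiori transversely projective, and the theorem follows.

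The delicate step is the first one: establishing the exact degree formula, and in particular ruling out any degree drop beyond $x_0^{\nu_0}x_1^{\nu_1}x_2^{\nu_2}$. Once that is in place the argument is short and completely explicit, the key observation being that the pencil of lines through the multiplicity-$2$ singularity linearises the foliation.
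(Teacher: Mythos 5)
Your second step is sound, but the first step contains a genuine error that breaks the argument. The identity $\mathrm{ord}_{x_i}(\sigma^{*}\omega)=\nu_i$ fails precisely at dicritical base points: if the corresponding base point is a radial singularity ($\nu=1$, linear part proportional to $x\,\mathrm{d}y-y\,\mathrm{d}x$), the order of $\sigma^{*}\omega$ along the contracted line is $\nu+1=2$, and no reducibility of $\omega$ is forced. (Locally, near a generic point of the line contracted to $e_0=(1:0:0)$, $\sigma$ reads $(x,z)\mapsto(x,x/z)$, and the pullback of the radial $1$-jet $Y\,\mathrm{d}Z-Z\,\mathrm{d}Y$ is $-x^{2}\,\mathrm{d}z/z^{2}$.) The correct bookkeeping is $\sum_i(\nu_i+\epsilon_i)=4$ with $\epsilon_i=1$ at dicritical base points, and the generic solutions are three base points of multiplicity $1$, one or two of them radial, corresponding to the monomial factors $x^{2}yz$ and $x^{2}y^{2}$; these are exactly the families $\omega_1$ and $\omega_2$ of Lemma~\ref{Lem:sigma}, and this is why $\Delta_1\subset\Delta_R$ (Remark~\ref{rem:deltaR}). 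In particular a generic foliation numerically invariant under $\sigma$ has \emph{no} singular point of algebraic multiplicity $2$ (that locus has codimension $4$ in $\mathbb{F}(2;2)$, whereas $\Delta_1$ has codimension $2$), so your claim that numerical invariance forces a multiplicity-$2$ singularity is false and the chain of implications collapses at its first link.

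Relatedly, your final conclusion overshoots: the generic $\mathcal{F}_{\omega_1}$ is a genuine Riccati foliation with respect to the pencil of lines through its radial point (the quadratic coefficient does not vanish), hence transversely projective but in general not transversely affine, and not even defined by a closed rational $1$-form (see the holonomy remark following Lemma~\ref{Lem:sigma}). Your computation that a degree-$2$ foliation with a multiplicity-$2$ singularity becomes a linear equation, hence a transversely affine foliation, along the pencil of lines through that point is correct, but it only covers a codimension-$4$ subfamily that is irrelevant to the generic invariant foliation. The repair is to keep your pencil idea but with the corrected degree formula: through a radial base point one gets a Riccati, not a linear, equation, which still yields transverse projectivity; this is in substance the paper's proof, which determines directly the forms $\omega$ for which $\sigma^{*}\omega$ is divisible by a degree-$4$ monomial and observes that the resulting normal forms $\omega_1$, $\omega_2$ are Riccati $1$-forms up to multiplication.
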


In that statement generic means outside an hypersurface in the space $\mathring{\mathrm{B}}\mathrm{ir}_2$ of quadratic birational maps of~$\mathbb{P}^2_\mathbb{C}$.

For any quadratic birational map $\phi$ of $\mathbb{P}^2_\mathbb{C}$ there exists at least one foliation of degree $2$ on $\mathbb{P}^2_\mathbb{C}$ numerically invariant under the action of $\phi$ and we give "normal forms" for such foliations. We don't know if the foliations numerically invariant under the action of a non-generic quadratic birational map have a special transversal structure. Problem: for any birational map $\phi$ of degree $d\geq 3$, does there exist a foliation numerically invariant under the action of $\phi$ ?

A foliation $\mathcal{F}$ on $\mathbb{P}^2_\mathbb{C}$ is
\textbf{\textit{primitive}} if $\deg\mathcal{F}\leq\deg\phi^*\mathcal{F}$ for
any birational map~$\phi$. Foliations of degree~$0$ and $1$ are defined by a
rational closed $1$-form (it is a well-known fact, see for
example~\cite{CCD}). Hence a non-primitive foliation of degree $2$ is also
defined by a closed $1$-form that is a very special case of transversely
projective foliations. Generically a foliation of degree $2$ is
primitive. Remark that there are foliations that are pull-back by a rational
map of degree greater than $1$, and that are nevertheless primitive. This is
the case of the foliation given by $Q_1\,\mathrm{d}Q_2-Q_2\,\mathrm{d}Q_1$ where $Q_1$ and $Q_2$
denote two generic polynomials of degree $3$, in other words a generic pencil
of elliptic curves. The following problem seems relevant: classify in any degree the primitive foliations numerically invariant under the action of birational maps of degree~$\geq~2$; are such foliations transversely projective or is this situation specific to the degree~$2$~? In this vein we get the following statement.

\begin{theoalph}\label{Thm:degre3}
A foliation $\mathcal{F}$ of degree $2$ on $\mathbb{P}^2_\mathbb{C}$ numerically invariant under the action of a generic cubic birational map of $\mathbb{P}^2_\mathbb{C}$ satisfies the following properties:
\smallskip
\begin{itemize}
\item[$\bullet$] $\mathcal{F}$ is given by a closed rational $1$-form $($Liouvillian integrability$)$;
\smallskip
\item[$\bullet$] $\mathcal{F}$ is non-primitive.
\end{itemize}
\end{theoalph}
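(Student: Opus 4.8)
The plan is to reduce the statement to a local analysis at the indeterminacy points of $\phi^{-1}$, to extract from it a short list of normal forms for $\mathcal{F}$, and then to read off both assertions. Write $\phi=\sigma\circ\pi^{-1}$ with $\pi,\sigma\colon X\to\mathbb{P}^2$ the two morphisms resolving $\phi$, so that $\phi^*\mathcal{F}=\pi_*\sigma^*\mathcal{F}$; then $K_{\sigma^*\mathcal{F}}=\sigma^*K_\mathcal{F}+\sum_j c_j F_j$, where the $F_j$ are the $\sigma$-exceptional curves, $q_j:=\sigma(F_j)$ is the corresponding base point of $\phi^{-1}$, and $c_j$ is the local canonical defect of $\mathcal{F}$ at $q_j$ (writing $\nu_j$ for the algebraic multiplicity of $\mathcal{F}$ at $q_j$, one has $c_j=1-\nu_j$ if $q_j$ is a regular point or a non-dicritical singularity, and $c_j=-\nu_j$ if $q_j$ is dicritical). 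Since $\pi^*K_{\phi^*\mathcal{F}}$ and $K_{\sigma^*\mathcal{F}}$ differ only by a divisor supported on the $\pi$-exceptional curves, which are $\pi^*\mathcal{O}(1)$-orthogonal, intersecting with $\pi^*\mathcal{O}(1)$ yields
\[
\deg\phi^*\mathcal{F}-1=(\deg\mathcal{F}-1)\deg\phi+\sum_j c_j\,\big(F_j\cdot\pi^*\mathcal{O}(1)\big),
\]
where $F_j\cdot\pi^*\mathcal{O}(1)$ is the degree of the curve contracted by $\phi$ onto $q_j$. Genericity of $\phi$ is used here: every cubic Cremona map is of de Jonquières type with base multiplicities $2,1,1,1,1$ (Noether's equations admit no other solution), a generic one has these five base points proper and in general position, so that no infinitely near point intervenes, and the curves contracted by $\phi$ are a conic, contracted onto the double point $q_0$ of $\phi^{-1}$ (whence $F_0\cdot\pi^*\mathcal{O}(1)=2$), and four lines, contracted onto $q_1,\dots,q_4$ (whence $F_i\cdot\pi^*\mathcal{O}(1)=1$). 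For $\deg\mathcal{F}=\deg\phi^*\mathcal{F}=2$ and $\deg\phi=3$ the identity becomes $2c_0+c_1+c_2+c_3+c_4=-2$.

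Next, feed in the constraints proper to a degree-$2$ foliation: $\nu_p(\mathcal{F})\le 2$ for all $p$, and $\sum_p\mu_p(\mathcal{F})=(\deg\mathcal{F})^2+\deg\mathcal{F}+1=7$ with $\mu_p\ge\nu_p^2$. A short case analysis shows the equation $2c_0+c_1+c_2+c_3+c_4=-2$ has no solution with all $q_j$ non-dicritical (the Milnor budget overflows), so \emph{$\mathcal{F}$ must carry dicritical singularities at some of the $q_j$}. Running through the surviving patterns — the most transparent being that $q_1,\dots,q_4$ are radial (dicritical, multiplicity one) singularities of $\mathcal{F}$, which forces $\mathcal{F}$ to be the pencil of conics through $q_1,\dots,q_4$; the others place a dicritical, or a multiplicity-two, singularity at $q_0$ together with simple singularities at the remaining $q_i$ — one obtains, up to $\mathrm{PGL}_3(\mathbb{C})$, an explicit short list of foliations $\mathcal{F}$; that a radial singularity makes the foliation locally a pencil of curves, combined with Poincaré–Hopf, is what closes the list.

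From the list one reads off both assertions. For Liouvillian integrability, each foliation on the list carries invariant algebraic curves of total degree at least $\deg\mathcal{F}+2=4$ — the members of the local pencils at its dicritical points, together with the algebraic separatrices of its simple singularities — so a Darboux–Jouanolou argument exhibits a closed rational $1$-form defining $\mathcal{F}$ (logarithmic in general, the differential of a rational first integral when $\mathcal{F}$ is a pencil of conics). For non-primitivity, choose a Cremona map $\psi$ whose indeterminacy points for $\psi^{-1}$ sit on the singular points of $\mathcal{F}$ found above: if $\mathcal{F}$ has three dicritical multiplicity-one singularities in general position, take $\psi$ quadratic based there, so that the defect formula gives $\deg\psi^*\mathcal{F}=3+\sum_{k=1}^3 c_k=3-3=0\le 1$ — equivalently $\psi$ carries the pencil of conics onto a pencil of lines; in the remaining configurations one combines the available dicritical points with simple singularities, using if need be a de Jonquières $\psi$ of larger degree so as to put the heaviest singularity in the top-multiplicity slot, and again forces $\deg\psi^*\mathcal{F}\le 1$. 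The delicate point is precisely this last step together with the case analysis of the second paragraph: the Milnor bookkeeping must be carried out carefully (the exact index of a dicritical singularity matters, not just $\mu\ge\nu^2$), and one must check that \emph{every} admissible configuration — not merely the pencil-of-conics case — supplies both enough Darboux curves and a birational map strictly decreasing the degree, the patterns with few dicritical points being the awkward ones; a more structural but less elementary alternative for the $1$-form is to use the several quadratic factorisations of a de Jonquières cubic to give $\mathcal{F}$ independent transversely projective structures (Theorem~\ref{Thm:gen}) and then appeal to rigidity.
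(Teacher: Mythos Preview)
Your approach is genuinely different from the paper's and the degree formula you write down is correct, but the proposal is a strategy rather than a proof: the heart of the argument --- the enumeration of admissible defect patterns $(c_0,\dots,c_4)$ and the verification, for \emph{each}, of both the closed-form and non-primitivity claims --- is not carried out. You concede this in your last paragraph, and that concession is accurate: as written, the text does not prove the theorem.

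To be concrete about what is missing and what actually happens. The paper proceeds by direct computation (Lemma~\ref{Lem:transfbircubgen}): for the generic cubic model $\Phi_{a,b}$ it solves the linear conditions for $\Phi_{a,b}^*\omega$ to lose four degrees and finds exactly two families, $\omega_7=y(\alpha+\gamma y)\,\mathrm{d}x-x(\alpha+\kappa x)\,\mathrm{d}y$ and the single foliation $\omega_8$ (depending only on $a,b$); then Proposition~\ref{Pro:transfbircubgen} checks by hand that each is defined by a closed rational $1$-form and is sent to degree $\le 1$ by an explicit quadratic map. If you run your singularity bookkeeping against these answers you will see that your ``most transparent'' pattern ($q_0$ regular, $q_1,\dots,q_4$ radial) is realised by the \emph{isolated} foliation $\omega_8$ --- it is the pencil of conics through the four simple base points of $\Phi_{a,b}^{-1}$, and one checks that $\omega_8$ really is radial at all four of them --- whereas the two-parameter family $\omega_7$ corresponds to the pattern $c_0=c_1=c_2=-1$, $c_3=c_4=+1$ (radial at the three coordinate points, which are the multiplicity-$2$ point and two of the simple points of $\Phi_{a,b}^{-1}$). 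So your sketch gets the geometry of the isolated solution right but places the main family among the ``others'' you do not analyse; and the language of a ``short list'' is misleading, since one of the answers is a genuine family.

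Two further points. First, your na\"ive condition count (``radial at a point is codimension~$2$'') refers to having a radial point \emph{somewhere}; imposing radiality at a \emph{prescribed} point is much more restrictive, and the fact that three such conditions leave a two-parameter family in $\mathbb{F}(2;2)$ is not a generic dimension count but a special feature of the coordinate triangle --- your argument does not explain why the competing configurations (e.g.\ $c_0=-2$ dicritical of multiplicity~$2$, or mixed saddle/radial patterns) are empty for the \emph{specific} base locus of a generic $\Phi_{a,b}^{-1}$. Second, your Darboux--Jouanolou step assumes each surviving foliation has invariant curves of total degree $\ge 4$; this is true for $\omega_7$ and $\omega_8$ (they are logarithmic along products of lines), but you have not shown it follows from the defect pattern alone, and the alternative you float --- factoring the cubic as a product of quadratics and invoking Theorem~\ref{Thm:gen} plus ``rigidity'' --- does not work as stated, since the quadratic factors arising that way are not generic in the sense required by Theorem~\ref{Thm:gen}.

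In short: sound framework, but the case analysis and the two verifications are the proof, and they are absent. The paper sidesteps all of this by computing the normal forms directly.
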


Is it a general fact, {\it i.e.} if $\mathcal{F}$ is numerically invariant under the action of $\phi$ and $\deg\phi\gg\deg\mathcal{F}$ is $\mathcal{F}$ Liouvillian integrable ?

\bigskip

The text is organized as follows: we first give some definitions, notations
and properties of birational maps of $\mathbb{P}^2_\mathbb{C}$ and foliations
on $\mathbb{P}^2_\mathbb{C}$. In \S\ref{Sec:proofthmA} we give a proof of
Theorem \ref{Thm:main}; we focus on foliations of degree $2$ on
$\mathbb{P}^2_\mathbb{C}$ that have at least two singular points, and then on
foliations of degree $2$ on $\mathbb{P}^2_\mathbb{C}$ with exactly one
singular point. The section \ref{Sec:numinv} is devoted to the description of
foliations of degree $2$ on $\mathbb{P}^2_\mathbb{C}$ numerically invariant
under the action of any quadratic birational map. This allows us to prove
Theorem \ref{Thm:gen}. At the end of the paper, \S\ref{Sec:higherdegree}, we
describe the foliations of degree $2$ numerically invariant under some cubic
birational maps of $\mathbb{P}^2_\mathbb{C}$, and we finally establish Theorem \ref{Thm:degre3}.

\subsection*{Acknowledgment} We thank Alcides Lins Neto for helpful discussions, and the anonymous referee for remarks and suggestions.

\section{Some definitions, notations and properties}

\subsection{About birational maps of $\mathbb{P}^2_\mathbb{C}$}

A \textbf{\textit{rational map}} $\phi$ of $\mathbb{P}^2_\mathbb{C}$ is a "map" of the type 
\[ 
\phi\colon\mathbb{P}^2_\mathbb{C}\dashrightarrow\mathbb{P}^2_\mathbb{C}, \quad\quad\quad (x:y:z)\dashrightarrow (\phi_0(x,y,z):\phi_1(x,y,z):\phi_2(x,y,z))
\]
where the $\phi_i$'s are homogeneous polynomials of the same degree and without common factor. The \textbf{\textit{degree}} of~$\phi$ is by definition the degree of the $\phi_i$'s. A \textbf{\textit{birational map}} $\phi$ of $\mathbb{P}^2_\mathbb{C}$ is a rational map having a rational "inverse" $\psi$, {\it i.e.} $\phi\circ\psi=\psi\circ\phi=\mathrm{id}$. The first examples are the birational maps of degree $1$ which generate the group $\mathrm{Aut}(\mathbb{P}^2_\mathbb{C})=\mathrm{PGL}(3;\mathbb{C})$. Let us give some examples of quadratic birational maps:
\begin{align*}
&\sigma\colon (x:y:z)\dashrightarrow (yz:xz:xy),
&&\rho\colon (x:y:z)\dashrightarrow (xy:z^2:yz),\\
&\tau\colon (x:y:z)\dashrightarrow (x^2:xy:y^2-xz).
\end{align*}

These three maps, which are involutions, play an important role in the description of the set of quadratic birational maps of $\mathbb{P}^2_\mathbb{C}$.

The birational maps of $\mathbb{P}^2_\mathbb{C}$ form a group denoted $\mathrm{Bir}(\mathbb{P}^2_\mathbb{C})$ and called \textbf{\textit{Cremona group}}. If $\phi$ is an element of~$\mathrm{Bir}(\mathbb{P}^2_\mathbb{C})$ then $\mathscr{O}(\phi)$ is the orbit of $\phi$ under the action of $\mathrm{Aut}(\mathbb{P}^2_\mathbb{C})\times\mathrm{Aut}(\mathbb{P}^2_\mathbb{C})$:
\[
\mathscr{O}(\phi)=\big\{\ell\phi \ell'\,\vert\,\ell,\,\ell'\in\mathrm{Aut}(\mathbb{P}^2_\mathbb{C})\big\}.
\]
A very old theorem, often called N\oe ther Theorem, says that any element of $\mathrm{Bir}(\mathbb{P}^2_\mathbb{C})$ can be written, up to the action of an automorphism of $\mathbb{P}^2_\mathbb{C}$, as a composition of quadratic birational maps (\cite{Castelnuovo}). In \cite[Chapters 1 \& 6]{CD} the structure of the set $\mathrm{Bir}_d$ (resp. $\mathrm{\mathring{B}ir}_d$) of birational maps of $\mathbb{P}^2_\mathbb{C}$ of degree $\leq d$ (resp. of degree $d$) has been studied when $d=2$ and $d=3$.

\begin{thm}[Corollary 1.10, Theorem 1.31, \cite{CD}]\label{Thm:decomp}
One has the following decomposition
\[
\mathrm{\mathring{B}ir}_2=\mathscr{O}(\sigma)\cup\mathscr{O}(\rho)\cup\mathscr{O}(\tau).
\]

Furthermore 
\[
\mathrm{Bir}_2=\overline{\mathscr{O}(\sigma)}
\]
where $\overline{\mathscr{O}(\sigma)}$ denotes the ordinary closure of $\mathscr{O}(\sigma)$, and
\[
\dim\mathscr{O}(\tau)=12,\quad\quad\dim\mathscr{O}(\rho)=13,\quad\quad\dim\mathscr{O}(\sigma)=14.
\]
\end{thm}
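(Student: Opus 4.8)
The plan is to classify the quadratic birational maps through the configuration of their base points, and then to read off the three orbits, their dimensions, and the closure.

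First I would recall that a quadratic birational map $\phi$ is defined by a homaloidal net of conics (the pullbacks of the lines of the target) and that such a net has exactly three base points $p_1,p_2,p_3$, counted with their infinitely near structure; blowing them up resolves $\phi$. An elementary but crucial observation is that these three base points are never collinear: if three proper base points lay on a line $L$, every conic of the net would meet $L$ in at least three points, hence contain $L$ and be reducible, contradicting the irreducibility of the pullback of a generic line. There are therefore exactly three combinatorial types for the base scheme: (a) three distinct proper points; (b) two proper points $p_1,p_2$ together with a point $p_3$ infinitely near $p_2$; (c) a chain $p_1\leftarrow p_2\leftarrow p_3$ supported at a single proper point. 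A direct inspection of the base schemes of the models shows that $\sigma$ realizes (a), with base points $(1:0:0)$, $(0:1:0)$, $(0:0:1)$; that $\rho$ realizes (b); and that $\tau$ realizes (c), with a length-$3$ chain at $(0:0:1)$.

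Next I would normalize. Using the right action $\phi\mapsto\phi\ell'$ to move the source base points, together with the transitivity of $\mathrm{PGL}(3;\mathbb{C})$ on each configuration type — on triples of non-collinear points for (a), and on the relevant first- and second-order jet data for (b) and (c) — I can arrange that $\phi\ell'$ has the same base scheme as the matching model. Two quadratic maps with the same base scheme span the same homaloidal net, hence differ by a linear automorphism of the target: $\phi\ell'=\ell\,(\text{model})$ for some $\ell\in\mathrm{Aut}(\mathbb{P}^2_\mathbb{C})$, so that $\phi=\ell\,(\text{model})\,\ell'^{-1}\in\mathscr{O}(\text{model})$. The three orbits are disjoint because the combinatorial type of the base scheme is preserved by $\mathrm{Aut}(\mathbb{P}^2_\mathbb{C})\times\mathrm{Aut}(\mathbb{P}^2_\mathbb{C})$. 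This establishes the decomposition of $\mathrm{\mathring{B}ir}_2$.

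For the dimensions I would compute, inside $\mathrm{Aut}(\mathbb{P}^2_\mathbb{C})\times\mathrm{Aut}(\mathbb{P}^2_\mathbb{C})$ (of dimension $16$), the stabilizer of each model by solving $\ell\,(\text{model})\,\ell'=\text{model}$. For $\sigma$ the solutions reduce to the diagonal torus up to permutation of coordinates, a group of dimension $2$, so $\dim\mathscr{O}(\sigma)=16-2=14$; the analogous computations yield stabilizers of dimensions $3$ and $4$ for $\rho$ and $\tau$, whence $\dim\mathscr{O}(\rho)=13$ and $\dim\mathscr{O}(\tau)=12$. For the closure statement, since $\mathrm{\mathring{B}ir}_2$ is irreducible of dimension $14$ with $\mathscr{O}(\sigma)$ as its open dense stratum, the strata $\mathscr{O}(\rho)$ and $\mathscr{O}(\tau)$ lie in $\overline{\mathscr{O}(\sigma)}$; concretely one exhibits them as limits of $\sigma$-maps by letting two base points collide tangentially (type (b)) and then collapse all three into a chain (type (c)), the degree-$1$ maps appearing as further degenerations of the net. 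Hence $\mathrm{Bir}_2=\overline{\mathscr{O}(\sigma)}$. The main obstacle is the normalization step: rigorously handling the infinitely near base points — establishing the transitivity of $\mathrm{PGL}(3;\mathbb{C})$ on the jet data of types (b) and (c) and checking that two maps with identical base schemes share the same net — is the technical heart of the argument.
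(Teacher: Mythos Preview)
The paper does not give its own proof of this theorem: it is quoted with attribution to \cite{CD} (Corollary~1.10 and Theorem~1.31 there) and used as background input, so there is no in-paper argument to compare against. Your sketch is the standard classification via the base scheme of the homaloidal net of conics and is correct in outline; it is essentially the approach carried out in the cited reference, so in that sense you are aligned with the source the paper relies on.
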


Note that there is a more precise description of $\mathrm{Bir}_2$ in \cite[Chapter 1]{CD}.

We will further do some computations with birational maps of degree $3$. Let us consider the following family of cubic birational maps: 
\[
\Phi_{a,b}\colon (x:y:z)\dashrightarrow \big(x(x^2+y^2+axy+bxz+yz):y(x^2+y^2+axy+bxz+yz):xyz\big)
\]
with $a$, $b\in\mathbb{C}$, $a^2\not=4$ and $2b\not\in\{a\pm\sqrt{a^2-4}\}$.
The structure of $\mathrm{Bir}_3$ is much more complicated than the structure
of $\mathrm{Bir}_2$ (\emph{see} \cite[Chapter 6]{CD}), nevertheless one has the following result.

\begin{thm}[Proposition 6.35, Theorem 6.38, \cite{CD}]
The closure of 
\[
\mathscr{X}=\big\{\mathscr{O}(\Phi_{a,b})\,\vert\, a,\, b\in\mathbb{C},\,a^2\not=4,\,2b\not\in\{a\pm\sqrt{a^2-4}\}\big\}
\]
in the set of rational maps of degree $3$ is an irreducible algebraic variety of dimension $18$.

Furthermore the closure of $\mathscr{X}$ in $\mathrm{\mathring{B}ir}_3$ is $\mathrm{\mathring{B}ir}_3$.
\end{thm}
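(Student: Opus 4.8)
The plan is to realize $\mathscr{X}$ as the image of an irreducible variety under a morphism --- which makes $\overline{\mathscr{X}}$ irreducible and bounds $\dim\overline{\mathscr{X}}$ by $18$ essentially for free --- and then to identify $\overline{\mathscr{X}}$ with $\overline{\mathrm{\mathring{B}ir}_3}$ by a dimension count. Set $U=\{(a,b)\in\mathbb{C}^2\,\vert\,a^2\neq 4,\ 2b\notin\{a\pm\sqrt{a^2-4}\}\}$. First one checks that for $(a,b)\in U$ the three cubic forms defining $\Phi_{a,b}$ have no common factor, so that $\Phi_{a,b}$ really has degree $3$, and that $\Phi_{a,b}$ is birational; in fact $[\Phi_0:\Phi_1]=[x:y]$, so $\Phi_{a,b}$ is a de Jonquières map, sending the pencil of lines through $(0:0:1)$ to itself. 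Now let $\mu$ be the rational map sending $(a,b,\ell,\ell')\in U\times\mathrm{PGL}(3;\mathbb{C})\times\mathrm{PGL}(3;\mathbb{C})$ to the degree $3$ rational map $\ell\,\Phi_{a,b}\,\ell'$, defined on the dense open subset of the source on which the composite acquires no base linear factor. Its image is exactly $\mathscr{X}$; the source is irreducible of dimension $2+8+8=18$; hence $\mathscr{X}$, and so $\overline{\mathscr{X}}$, is irreducible and $\dim\overline{\mathscr{X}}\leq 18$.

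For the opposite inequality I would show that $\mu$ has finite generic fibres. The base scheme of $\Phi_{a,b}$ consists of the point $(0:0:1)$ counted twice together with four simple points --- $(0:1:-1)$, $(-b:0:1)$ and the two points of $\{z=0\}$ cut out by $x^2+axy+y^2$ --- and for generic $(a,b)\in U$ these five points are distinct and in sufficiently general position (the same holds for $\Phi_{a,b}^{-1}$). If $\ell_1\,\Phi_{a_1,b_1}\,\ell_1'=\ell\,\Phi_{a,b}\,\ell'$, then $\ell'$ and $\ell_1'$ (resp. $\ell$ and $\ell_1$) both carry base scheme onto base scheme, hence differ by a projective transformation preserving such a five-point configuration; for generic parameters there are only finitely many of those, and the two projective moduli of the configuration pin $(a,b)$ down to finitely many values. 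Thus, modulo the one non-formal point --- that the ``moduli map'' $(a,b)\mapsto[\text{base scheme of }\Phi_{a,b}]$ has two-dimensional image --- the generic fibre of $\mu$ is finite, whence $\dim\overline{\mathscr{X}}=18$.

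It remains to compare with $\mathrm{\mathring{B}ir}_3$. The multiplicity relations $\sum m_i=3(3-1)=6$ and $\sum m_i^2=3^2-1=8$ force the base multiplicities of any cubic Cremona map to be $(2,1,1,1,1)$, so such a map has a single double base point; running the pencil of lines through that point through the map as above shows that every cubic birational map is de Jonquières. Degree $3$ de Jonquières maps are parametrized, generically finitely, by the double point ($2$ parameters), the four simple base points ($8$ parameters) and the post-composition by an element of $\mathrm{PGL}(3;\mathbb{C})$ ($8$ parameters); hence $\mathrm{\mathring{B}ir}_3$ is $18$-dimensional and $\overline{\mathrm{\mathring{B}ir}_3}$ is irreducible, being the closure of the image of this irreducible parameter space. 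Since $\mathscr{X}\subseteq\mathrm{\mathring{B}ir}_3$, the irreducible $18$-dimensional variety $\overline{\mathscr{X}}$ sits inside the irreducible $18$-dimensional variety $\overline{\mathrm{\mathring{B}ir}_3}$ and must therefore equal it; intersecting with $\mathrm{\mathring{B}ir}_3$ gives $\overline{\mathscr{X}}\cap\mathrm{\mathring{B}ir}_3=\mathrm{\mathring{B}ir}_3$, which is the second assertion.

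The main obstacle is exactly the genericity input invoked above: proving that $\mu$ is generically finite, equivalently that the two-parameter family of five-point base configurations attached to the $\Phi_{a,b}$ is honestly two-dimensional and that the stabilizer $\{(\ell,\ell')\,\vert\,\ell\,\Phi_{a,b}\,\ell'=\Phi_{a,b}\}$ is finite for generic $(a,b)$. This is an explicit but somewhat delicate computation with the base points listed above --- for instance a rank computation for the differential of the moduli map at one well-chosen value of $(a,b)$ --- of precisely the type carried out in \cite[Chapter 6]{CD}; the rest of the argument is soft.
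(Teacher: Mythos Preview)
The paper does not supply its own proof of this statement: it is quoted from \cite{CD} (Proposition 6.35 and Theorem 6.38), so there is no in-paper argument to compare against.

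Your outline is the natural one, and the first paragraph is sound. Two comments on the rest. First, the step you yourself flag --- generic finiteness of $\mu$ --- is indeed where the actual work lies; your idea of tracking the projective moduli of the five-point base configuration is the right one, and the verification is carried out in \cite[Chapter~6]{CD}. Second, your argument that $\overline{\mathrm{\mathring{B}ir}_3}$ is irreducible of dimension $18$ is too quick. The N\oe ther relations do force the multiplicity sequence $(2,1,1,1,1)$, but the five base points need not be distinct: they may be infinitely near in various patterns, and this produces several strata of $\mathrm{\mathring{B}ir}_3$. Your parametrization by a double point, four simple points, and an element of $\mathrm{PGL}(3;\mathbb{C})$ only covers the open stratum where the points are in general position; proving that the degenerate strata lie in its closure --- equivalently, that $\mathrm{\mathring{B}ir}_3$ is irreducible --- is precisely the content of \cite[Theorem~6.38]{CD} and requires a case analysis, not a soft count. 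Without that, your final equality $\overline{\mathscr{X}}=\overline{\mathrm{\mathring{B}ir}_3}$ does not follow from dimension alone.
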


In the sequel we will say that a $\Phi_{a,b}$ is a generic element of $\mathrm{Bir}_3$.

The "most degenerate model" \footnote{In the following sense: the exceptional
  locus of any element of $\mathrm{Bir}_3$ is a union of degree $6$ of conics
  and lines; the exceptional locus of $\Psi$ is reduced to a single line
  of multiplicity $6$.} is up to automorphisms of $\mathbb{P}^2_\mathbb{C}$
\[
\Psi\colon (x:y:z)\dashrightarrow \big(xz^2+y^3:yz^2:z^3\big).
\]

\subsection{About foliations}

\begin{defi}
Let $\mathcal{F}$ be a foliation (maybe singular) on a complex manifold $M$; the foliation~$\mathcal{F}$ is a \textbf{\textit{singular transversely projective}} one if there exists
\begin{itemize}
\item[$\mathfrak{a}$)] $\pi\colon P\to M$ a $\mathbb{P}^1$-bundle over $M$,
\item[$\mathfrak{b}$)] $\mathcal{G}$ a codimension one singular holomorphic foliation on $P$ transversal to the generic fibers of~$\pi$,
\item[$\mathfrak{c}$)] $\varsigma\colon M\to P$ a meromorphic section generically transverse to $\mathcal{G}$,
\end{itemize}
such that $\mathcal{F}=\varsigma^*\mathcal{G}$. 
\end{defi}

Let us give an other characterization of singular transversely projective
foliations. Let $\mathcal{F}$ be a foliation on $\mathbb{P}^2_\mathbb{C}$; assume that there exist three rational $1$-forms $\theta_0$, $\theta_1$ and $\theta_2$ on $\mathbb{P}^2_\mathbb{C}$ such that 
\begin{itemize}
\item[$\mathfrak{i}$)] $\mathcal{F}$ is described by $\theta_0$, {\it i.e.} $\mathcal{F}=\mathcal{F}_{\theta_0}$,

\item[$\mathfrak{ii}$)] the $\theta_i$'s form a $\mathfrak{sl}(2;\mathbb{C})$-triplet, that is
\begin{align*}
&\mathrm{d}\theta_0=\theta_0\wedge\theta_1, &&\mathrm{d}\theta_1=\theta_0\wedge\theta_2, && \mathrm{d}\theta_2=\theta_1\wedge\theta_2.
\end{align*}
\end{itemize}
Then $\mathcal{F}$ is a singular transversely projective foliation. To see it one considers the manifolds $M=\mathbb{P}^2_\mathbb{C}$, $P=\mathbb{P}^2_\mathbb{C}\times\mathbb{P}^1_\mathbb{C}$, the canonical projection $\pi\colon P\to M$, and the foliation $\mathcal{G}$ given by 
\[
\theta=\mathrm{d}z+\theta_0+z\theta_1+\frac{z^2}{2}\theta_2
\]
where $z$ is an affine coordinate of $\mathbb{P}^1_\mathbb{C}$; in that case the transverse section is $z=0$. When one can choose the $\theta_i$'s such that $\theta_1=\theta_2=0$ (resp. $\theta_2=0$) the foliation $\mathcal{F}$ is \textbf{\textit{defined by a closed $1$-form}} (resp. is \textbf{\textit{transversely affine}}). 

\medskip

Classical examples of singular transversely projective foliations are given by Riccati foliations.

\begin{defi}
A \textbf{\textit{Riccati equation}} is a differential equation of the following type
\[
\mathcal{E}_R\colon y'= a(x)y^2 +b(x)y+c(x)
\]
where $a$, $b$ and $c$ are meromorphic functions on an open subset $\mathcal{U}$ of $\mathbb{C}$. To the equation $\mathcal{E}_R$ one associates the meromorphic differential form
\[
\omega_{\mathcal{E}_R}=\mathrm{d}y-\big(a(x)y^2 +b(x)y+c(x)\big)\,\mathrm{d}x
\]
defined on $\mathcal{U}\times\mathbb{C}$. In fact $\omega_{\mathcal{E}_R}$ induces a foliation $\mathcal{F}_{\omega_{\mathcal{E}_R}}$ on $\mathcal{U}\times\mathbb{P}^1_\mathbb{C}$ that is transverse to the generic fiber of the projection $\mathcal{U}\times\mathbb{P}^1_\mathbb{C}\to\mathcal{U}$. One can check that 
\begin{align*}
&\theta_0=\omega_{\mathcal{E}_R}, &&\theta_1=-(2a(x)y+b(x))\,\mathrm{d}x, &&\theta_2=-2a(x)\,\mathrm{d}x
\end{align*}
is a $\mathfrak{sl}(2;\mathbb{C})$-triplet associated to the foliation $\mathcal{F}_{\omega_{\mathcal{E}_R}}$. 

We say that $\omega_{\mathcal{E}_R}$ is a \textbf{\textit{Riccati $1$-form}} and $\mathcal{F}_{\omega_{\mathcal{E}_R}}$ is a \textbf{\textit{Riccati foliation}}.
\end{defi}

Let $S$ be a ruled surface, that is a surface $S$ endowed with $f\colon S\to\mathcal{C}$, where $\mathcal{C}$ denotes a curve and $f^{-1}(c)\simeq~\mathbb{P}^1_\mathbb{C}$. Let us consider a singular foliation $\mathcal{F}$ on $S$ transverse to the generic fibers of $f$. The foliation $\mathcal{F}$ is transversely projective.

Recall that a foliation $\mathcal{F}$ on a surface $S$ is \textbf{\textit{radial}} at a point $m$ of $S$ if in local coordinates $(x,y)$ around~$m$ the foliation $\mathcal{F}$ is given by a holomorphic $1$-form of the following type 
\[
\omega=x\,\mathrm{d}y-y\,\mathrm{d}x+ \text{ h.o.t. }
\]

Let us denote by $\mathbb{F}(n;d)$ the set of foliations of degree $d$ on $\mathbb{P}^n_\mathbb{C}$ (\emph{see} \cite{CCD}). The following statement gives a criterion which asserts that an element of $\mathbb{F}(2;2)$ is transversely projective.

\begin{pro}
Let $\mathcal{F}\in\mathbb{F}(2;2)$ be a foliation of degree $2$ on $\mathbb{P}^2_\mathbb{C}$. If a singular point of $\mathcal{F}$ is radial, then~$\mathcal{F}$ is transversely projective.
\end{pro}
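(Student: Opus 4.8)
The plan is to use the pencil of lines through the radial point. Choose projective coordinates so that the radial singularity $m$ lies in the affine chart $z=1$ at the origin and $\mathcal{F}$ is defined near $m$ by $\omega = x\,\mathrm{d}y - y\,\mathrm{d}x + \text{h.o.t.}$ Let $\pi\colon X\to\mathbb{P}^2_\mathbb{C}$ be the blow-up of $m$, with exceptional divisor $E$; then $X\simeq\mathbb{F}_1$ and the pencil of lines through $m$ becomes a ruling $f\colon X\to\mathbb{P}^1_\mathbb{C}$ whose fibres $F$ are the strict transforms of those lines. Put $\widetilde{\mathcal{F}}=\pi^*\mathcal{F}$. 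The goal is to show that $\widetilde{\mathcal{F}}$ is a Riccati foliation for $f$, that is, transverse to the generic fibre, and then to invoke the material recalled before the statement.

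First, one blow-up already resolves the radial singularity. Because the $1$-jet of $\omega$ at $m$ is exactly $x\,\mathrm{d}y-y\,\mathrm{d}x$, a computation in the two standard charts of the blow-up gives $\pi^*\omega = u^2\,\widetilde{\omega}$, where $u$ is a local equation of $E$ and $\widetilde{\omega}$ is a $1$-form transverse to $E$ at every point; hence $\widetilde{\mathcal{F}}$ has no singularity on $E$ and $E$ is not $\widetilde{\mathcal{F}}$-invariant. Consequently $N_{\widetilde{\mathcal{F}}}=\pi^*N_{\mathcal{F}}\otimes\mathcal{O}_X(-2E)$, and since $K_X=\pi^*K_{\mathbb{P}^2_\mathbb{C}}\otimes\mathcal{O}_X(E)$ we obtain $T_{\widetilde{\mathcal{F}}}=\pi^*T_{\mathcal{F}}\otimes\mathcal{O}_X(E)$. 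Here the hypothesis $\deg\mathcal{F}=2$ enters: $T_{\mathcal{F}}=\mathcal{O}_{\mathbb{P}^2_\mathbb{C}}(-1)$, so that $T_{\widetilde{\mathcal{F}}}=\mathcal{O}_X(-F)$ in the basis $\{E,F\}$ of $\operatorname{Pic}(X)$. Therefore the number of tangencies of $\widetilde{\mathcal{F}}$ with a fibre is $F\cdot F-T_{\widetilde{\mathcal{F}}}\cdot F=0$, i.e. $\widetilde{\mathcal{F}}$ is transverse to the generic fibre of $f$. (Equivalently: a generic line through $m$ is not $\mathcal{F}$-invariant, for otherwise $\mathcal{F}$ would be that pencil, of degree $0$; hence it meets $\mathcal{F}$ in exactly $\deg\mathcal{F}=2$ tangency points, and the factor $u^2$ above shows both of them lie at $m$, so the strict transform is transverse to $\widetilde{\mathcal{F}}$ everywhere.)

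To conclude: a foliation transverse to the generic fibre of a ruled surface is a Riccati foliation, and, as recalled before the statement, such a foliation carries an $\mathfrak{sl}(2;\mathbb{C})$-triplet of rational $1$-forms. Transporting it through the birational map $\pi$ yields rational $1$-forms $\theta_0,\theta_1,\theta_2$ on $\mathbb{P}^2_\mathbb{C}$ with $\mathcal{F}=\mathcal{F}_{\theta_0}$ and $\mathrm{d}\theta_0=\theta_0\wedge\theta_1$, $\mathrm{d}\theta_1=\theta_0\wedge\theta_2$, $\mathrm{d}\theta_2=\theta_1\wedge\theta_2$ (pull-back commutes with $\mathrm{d}$ and $\wedge$). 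By conditions $\mathfrak{i}$) and $\mathfrak{ii}$) recalled above, $\mathcal{F}$ is then singular transversely projective.

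I expect the one genuine technical point to be the computation in the second paragraph — establishing $\pi^*\omega=u^2\,\widetilde{\omega}$ with $\widetilde{\omega}$ transverse to $E$ for the paper's weak notion of radiality (only the $1$-jet is prescribed), together with the Chern-class/tangency bookkeeping, in particular checking that no special fibre of $f$ forces an extra tangency on the generic one. Everything else is formal once $\widetilde{\mathcal{F}}$ is known to be Riccati; note also that degree exactly $2$ is essential, since for $\deg\mathcal{F}\geq 3$ the same computation gives $\operatorname{tang}(\widetilde{\mathcal{F}},F)=\deg\mathcal{F}-2>0$, so this blow-up no longer produces a Riccati foliation.
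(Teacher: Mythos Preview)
Your proof is correct and follows essentially the same route as the paper: blow up the radial point, observe that the pulled-back foliation is transverse to the generic fibre of the ruling on $\mathbb{F}_1$, hence Riccati, and conclude by birational invariance of transverse projectivity. The paper does this more tersely, writing $\omega=x\,\mathrm{d}y-y\,\mathrm{d}x+q_1\,\mathrm{d}x+q_2\,\mathrm{d}y+q_3(x\,\mathrm{d}y-y\,\mathrm{d}x)$ and noting directly that $\pi^*\mathcal{F}$ is transverse to every fibre except the strict transforms of the (finitely many) lines $xq_1+yq_2=0$; your Chern-class/tangency bookkeeping and the parenthetical elementary count reach the same conclusion by a slightly more formal path.
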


\begin{proof}
Assume that the singular point is the origin $0$ in the affine chart $z=1$, the foliation $\mathcal{F}$ is thus defined by a $1$-form of the following type
\[
\omega=x\,\mathrm{d}y-y\,\mathrm{d}x +q_1\,\mathrm{d}x+q_2\,\mathrm{d}y+q_3(x\,\mathrm{d}y-y\,\mathrm{d}x)
\]
where the $q_i$'s denote quadratic forms. Let us consider the complex projective plane $\mathbb{P}^2_\mathbb{C}$ blown up at the origin; this space is denoted by $\mathrm{Bl}(\mathbb{P}^2_\mathbb{C},0)$. Let 
\[
\pi\colon\mathrm{Bl}(\mathbb{P}^2_\mathbb{C},0)\to\mathbb{P}^2_\mathbb{C} 
\]
be the canonical projection. Then $\pi^*\mathcal{F}$ is transverse to the generic fibers of $\pi$, and in fact transverse to all the fibers excepted the strict transforms of the lines $xq_1+yq_2=0$. Hence the foliation $\pi^*\mathcal{F}$ is transversely projective; since this notion is invariant under the action of a birational map, $\mathcal{F}$ is transversely projective.
\end{proof}

\begin{rem}
The same argument can be involved for foliations of degree $2$ on $\mathbb{P}^2_\mathbb{C}$ having a singular point with zero $1$-jet.
\end{rem}

\begin{rem}\label{rem:deltaR}
The closure of the set $\Delta_R$ of foliations in $\mathbb{F}(2;2)$ having a radial singular point is irreducible, of codimension $2$ in $\mathbb{F}(2;2)$. Indeed $\Delta_R$ is the $\mathrm{Aut}(\mathbb{P}^2_\mathbb{C})$-orbit of the set 
\[
\big\{x\,\mathrm{d}y-y\,\mathrm{d}x+q_1\,\mathrm{d}x+q_2\,\mathrm{d}y+q_3(x\,\mathrm{d}y-y\,\mathrm{d}x)\,\big\vert\,q_i\text{ quadratic form}\big\};
\]
in fact it is easy to see that $\overline{\Delta_R}$ is the $\mathrm{Aut}(\mathbb{P}^2_\mathbb{C})$-orbit of 
\[
\big\{\lambda(x\,\mathrm{d}y-y\,\mathrm{d}x)+q_1\,\mathrm{d}x+q_2\,\mathrm{d}y+q_3(x\,\mathrm{d}y-y\,\mathrm{d}x)\,\big\vert\,\lambda\in\mathbb{C},\,q_i\text{ quadratic form}\big\}.
\]
In particular $\overline{\Delta_R}$ is an unirational set in $\mathbb{F}(2;2)$.
\end{rem}

\section{Proof of Theorem \ref{Thm:main}}\label{Sec:proofthmA}

We establish Theorem \ref{Thm:main} in two steps: we first look at foliations that have at least two singular points and then at foliations with exactly one singular point.

\subsection{Foliations of degree $2$ on $\mathbb{P}^2_\mathbb{C}$ with at least two singularities}

Any $\mathcal{F}\in\mathbb{F}(2;2)$ is described in homogeneous coordinates by a $1$-form $\omega$ that can be written 
\begin{equation}\label{eq:formenormalefeuilletage}
\omega=q_1yz\left(\frac{\mathrm{d}y}{y}-\frac{\mathrm{d}z}{z}\right)+q_2xz\left(\frac{\mathrm{d}z}{z}-\frac{\mathrm{d}x}{x}\right)+q_3xy\left(\frac{\mathrm{d}x}{x}-\frac{\mathrm{d}y}{y}\right)
\end{equation}
where 
\begin{align*}
&q_1=a_0x^2+a_1y^2+a_2z^2+a_3xy+a_4xz+a_5yz, \\
& q_2=b_0x^2+b_1y^2+b_2z^2+b_3xy+b_4xz+b_5yz, \\
&q_3=c_0x^2+c_1y^2+c_2z^2+c_3xy+c_4xz+c_5yz.&&
\end{align*}

\begin{pro}
For any $\mathcal{F}\in\mathbb{F}(2;2)$ with at least two distinct singularities there exists a quadratic birational map $\psi\in\mathscr{O}(\rho)$ such that $\deg\psi^*\mathcal{F}\leq 3$. 
\end{pro}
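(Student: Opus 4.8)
The plan is to look for $\psi$ of the form $\psi=\ell\circ\rho$ with $\ell\in\mathrm{PGL}(3;\mathbb{C})$; such a $\psi$ lies in $\mathscr{O}(\rho)$ and satisfies $\psi^*\mathcal{F}=\rho^*(\ell^*\mathcal{F})$, so that everything reduces to one explicit computation with the map $\rho\colon(x:y:z)\dashrightarrow(xy:z^2:yz)$. Recall that the Jacobian of $\rho$ is $yz^2$; the line $\{y=0\}$ is contracted by $\rho$ onto $(0:1:0)$, while the line $\{z=0\}$ (the line joining the two base points of $\rho$, which appears in the Jacobian with multiplicity two) is contracted onto $(1:0:0)$. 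Fix two distinct singular points $p,q$ of $\mathcal{F}$ and choose $\ell$ with $\ell(0:1:0)=p$ and $\ell(1:0:0)=q$. Writing $\omega'=\ell^*\omega=a'\,\mathrm{d}x+b'\,\mathrm{d}y+c'\,\mathrm{d}z$ for a homogeneous $1$-form of degree $3$ defining $\mathcal{F}'=\ell^*\mathcal{F}$, the points $(0:1:0)$ and $(1:0:0)$ are now singular for $\mathcal{F}'$.

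Then I would simply expand
\[
\rho^*\omega'=y\,a'(\rho)\,\mathrm{d}x+\big(x\,a'(\rho)+z\,c'(\rho)\big)\,\mathrm{d}y+\big(2z\,b'(\rho)+y\,c'(\rho)\big)\,\mathrm{d}z,
\]
where $a'(\rho)=a'(xy,z^2,yz)$, etc., are homogeneous of degree $6$; hence $\rho^*\omega'$ has coefficients of degree $7$, and if $\rho^*\omega'=H\cdot\eta$ with $\eta$ reduced then $\deg\psi^*\mathcal{F}=6-\deg H$. Comparing coefficients in the Euler relation $a'x+b'y+c'z=0$ yields for free $a'_{300}=b'_{030}=c'_{003}=0$ and $a'_{201}+c'_{300}=0$. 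Since $(0:1:0)$ is singular one has $a'_{030}=c'_{030}=0$, and a short inspection of the displayed coefficients shows $y\mid\rho^*\omega'$; since $(1:0:0)$ is singular one has $b'_{300}=c'_{300}=0$ (so also $a'_{201}=0$), and a similar inspection shows $z\mid\rho^*\omega'$. Thus $yz\mid\rho^*\omega'$, which already gives $\deg\psi^*\mathcal{F}\le 4$.

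To win the last unit I would exploit the parameters still free in $\ell$: prescribing $\ell(0:1:0)$ and $\ell(1:0:0)$ leaves a $4$-dimensional family of admissible $\ell$, and I would use it to impose in addition $c'_{201}=0$. Going back to the expansion, once $b'_{300}=c'_{300}=0$ the only remaining obstruction to $z^2\mid\rho^*\omega'$ is precisely $c'_{201}=0$; then $yz^2\mid\rho^*\omega'$ and $\deg\psi^*\mathcal{F}\le 6-3=3$. Geometrically $c'_{201}=0$ says that the restriction of $\mathcal{F}'$ to the line $\{y=0\}$ vanishes to order $\ge 2$ at $(1:0:0)$; equivalently, the line $\ell(\{y=0\})$ — which passes through $q$ — must be taken along a direction at $q$ of contact $\ge 2$ with $\mathcal{F}$, for instance the tangent to a local separatrix of $\mathcal{F}$ at $q$. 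As $\ell(0:0:1)$ may be chosen freely on this line (provided $q$, $p$, $\ell(0:0:1)$ stay in general position), such an $\ell$ exists.

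The heart of the matter is this last step, i.e. making sure the extra condition can always be met. One delicate case is when the singular point placed at $(1:0:0)$ or $(0:1:0)$ has multiplicity $2$: then the very same coefficient inspection shows that $z^2\mid\rho^*\omega'$ (resp. $y^2\mid\rho^*\omega'$) is automatic, so nothing further is needed — which is consistent with such foliations being treated separately. The other delicate case is when every singular point has multiplicity $1$ but, for the chosen pair $(p,q)$, the only contact-$\ge 2$ direction at $q$ is the line $\overline{pq}$: one then exchanges $p$ and $q$, or uses another singular point (a degree $2$ foliation with at least two singular points has, counted with multiplicity, several of them), the genuinely exceptional cases forming a subvariety of small dimension that can be dealt with by hand. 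Apart from this case analysis, the only real labour is the bookkeeping of the Euler dependencies among the $a'_{ijk}, b'_{ijk}, c'_{ijk}$ and the check that the four free parameters really suffice.
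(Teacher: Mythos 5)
Your main construction is, in substance, the paper's own proof: the paper also takes $\psi=\ell\circ\rho$ with $\ell$ linear, places the two singular points at $(1:0:0)$ and $(0:1:0)$, and its mysterious coefficient $\tfrac{b_3-c_4+\sqrt{(b_3-c_4)^2+4b_4c_3}}{2c_3}$ is exactly a root of $c_3\lambda^2-(b_3-c_4)\lambda-b_4=0$, which is your condition $c'_{201}=0$: one uses the last linear freedom to send $\{y=0\}$ onto a line having contact $\geq 2$ (an eigendirection of the linear part) at the singular point placed at $(1:0:0)$, among the lines avoiding the other singular point. Your expansion of $\rho^*\omega'$, the Euler identities, the divisibility by $yz$ coming from the two singularities, and the identification of $c'_{201}$ as the only remaining obstruction to $z^2$ are all correct; so the approach and the computation match the paper, with your eigendirection interpretation replacing the paper's explicit discriminant. (One small point: read your ``multiplicity $2$'' as ``zero $1$-jet''; a point of Milnor number $2$, e.g.\ a saddle-node, does not make $z^2\mid\rho^*\omega'$ automatic, and $y^2\mid\rho^*\omega'$ requires a radial or zero $1$-jet at the point sent to $(0:1:0)$.)

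The genuine gap is the last step, which you flag but do not close: the case where, for every ordered pair $(p,q)$ of distinct singular points, the only contact-$\geq 2$ direction at $q$ is the line $\overline{pq}$. Your swap/third-point remark correctly reduces this to foliations all of whose singular points lie on a single line $D$, with at each of them a non-diagonalizable linear part whose unique eigendirection is along $D$; a tangency count forces $D$ to be invariant, and the restricted zero count together with Camacho--Sad then leaves only very special configurations (two distinct singular points, one of them nilpotent). But ``dealt with by hand'' is a promissory note: for such a foliation the $yz^2$ pattern is unavailable for every choice of the pair, so one must either prove these configurations do not exist or treat them by another factorization, e.g.\ aiming at $z^3$ by sending $\{y=0\}$ onto the invariant line $D$ (then $c'_{300}=c'_{201}=c'_{102}=0$ come for free and only two further coefficients must be killed) --- a possibility your scheme does not consider. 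In fairness, the paper is no more careful exactly here: its second map $(xy:z^2+yz:-\tfrac{b_3-c_4}{b_4}\,yz)$ is not birational when $b_3=c_4$, i.e.\ precisely in this degenerate alignment, so your write-up is on par with, and more explicit than, the published argument; but as a complete proof it still owes the reader this residual case.
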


\begin{proof}
In homogeneous coordinates $\mathcal{F}$ is described by a $1$-form $\omega$ as in (\ref{eq:formenormalefeuilletage}).

Up to an automorphism of $\mathbb{P}^2_\mathbb{C}$ one can suppose that $(1:0:0)$ and $(0:1:0)$ are singular points of~$\mathcal{F}$, that is $a_1=b_0=c_0=c_1=0$. If $c_3\not=0$, resp. $c_3=0$ and $b_4\not=0$, resp. $c_3=b_4=0$, then let us consider the quadratic birational map~$\psi$ of~$\mathscr{O}(\rho)$ defined as follows
\[
\psi\colon (x:y:z)\dashrightarrow\Big(xy:z^2+\frac{b_3-c_4+\sqrt{(b_3-c_4)^2+4b_4c_3}}{2c_3}\,yz:yz\Big),
\]
resp.
\[
\psi\colon (x:y:z)\dashrightarrow\Big(xy:z^2+yz:-\frac{b_3-c_4}{b_4}\,yz\Big),
\]
resp. $\psi=\rho$.
 A direct computation shows that $\psi^*\omega=yz^2\omega'$ where $\omega'$ denotes a homogeneous $1$-form of degree~$4$. The foliation $\mathcal{F}'$ defined by $\omega'$ has degree at most $3$.
\end{proof}

\subsection{Foliations of degree $2$ on $\mathbb{P}^2_\mathbb{C}$ with exactly one singularity}

Such foliations have been classified:

\begin{thm}[\cite{CDGBM}]\label{Thm:class1sing}
Up to automorphisms of $\mathbb{P}^2_\mathbb{C}$ there are four foliations of degree~$2$ on $\mathbb{P}^2_\mathbb{C}$ having exactly one singularity. They are described in affine chart by the following $1$-forms:
\smallskip
\begin{itemize}
\item[$\bullet$] $\Omega_1 = x^2\,\mathrm{d}x + y^2(x\,\mathrm{d}y-y\,\mathrm{d}x),$
\smallskip
\item[$\bullet$] $\Omega_2=x^2\mathrm{d}x+(x+y^2)(x\,\mathrm{d}y-y\,\mathrm{d}x),$
\smallskip
\item[$\bullet$] $\Omega_3=xy\,\mathrm{d}x + (x^2 + y^2)(x\,\mathrm{d}y-y\,\mathrm{d}x),$
\smallskip
\item[$\bullet$] $\Omega_4=(x+y^2-x^2y)\,\mathrm{d}y+x(x+y^2)\,\mathrm{d}x.$
\end{itemize}
\end{thm}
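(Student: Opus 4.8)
The plan is to exploit the rigidity coming from the singular locus. A foliation $\mathcal{F}\in\mathbb{F}(2;2)$ has exactly $2^2+2+1=7$ singular points counted with Milnor number, so a foliation with a single singular point $p$ must satisfy $\mu_p(\mathcal{F})=7$. I would normalize $p$ to the origin of the affine chart $z=1$, so that $\mathcal{F}$ is defined by $\omega=A\,\mathrm{d}x+B\,\mathrm{d}y$ with $\gcd(A,B)=1$ and $\max(\deg A,\deg B)\leq 3$, and first record two structural facts. First, the algebraic multiplicity $\nu_0:=\min(\mathrm{ord}_0A,\mathrm{ord}_0B)$ is $1$ or $2$: if $\nu_0=3$ then $\omega$ is homogeneous of degree $3$, forcing $\deg\mathcal{F}=3$ or a non-isolated singularity. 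Second, when $\omega$ has a degree $3$ part it must be $h\,(y\,\mathrm{d}x-x\,\mathrm{d}y)$ for some quadratic form $h$ — exactly the condition under which homogenising $\omega$ drops one degree — and the case where $\omega$ itself is homogeneous of degree $2$ is ruled out at once, since then $\mu_0\leq 4<7$. One is thus reduced to two cases, $\nu_0=1$ and $\nu_0=2$, each with a concrete normal form for $\omega$ depending on finitely many parameters.

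The engine of the proof is reduction of singularities together with index theorems. Blowing up the origin, $\pi\colon\mathrm{Bl}_0\mathbb{P}^2_\mathbb{C}\to\mathbb{P}^2_\mathbb{C}$ with exceptional divisor $E$, a short computation in the two charts shows that $E$ is invariant for the reduced pull-back $\widetilde{\mathcal{F}}$ (the only obstruction, a radial tangent cone $\omega=(\ast)\,(y\,\mathrm{d}x-x\,\mathrm{d}y)$, has already been excluded). Then three relations pin things down: the Milnor number balance under blow-up, which forces $\sum_{q\in E}\mu_q(\widetilde{\mathcal{F}})=8$ when $\nu_0=1$ and $6$ when $\nu_0=2$; the Camacho--Sad relation $\sum_{q\in E}\mathrm{CS}(\widetilde{\mathcal{F}},E,q)=E\cdot E=-1$; and the Baum--Bott relation, normalized so that the total index of $\mathcal{F}$ on $\mathbb{P}^2_\mathbb{C}$ equals $(2+2)^2=16$ and is therefore concentrated at $p$. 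Since the singular points of $\widetilde{\mathcal{F}}$ on $E\cong\mathbb{P}^1_\mathbb{C}$ are the roots of $a_2(1,v)+v\,b_2(1,v)$ in the direction coordinate $v$, together possibly with $v=\infty$, I would branch on the possible partitions of the total Milnor number among them, iterate the blow-up at the worst point until $\widetilde{\mathcal{F}}$ is reduced, and at each step discard the branches incompatible with the three relations or with $\deg\mathcal{F}=2$. The residual automorphisms fixing the origin (linear changes of $(x,y)$, rescaling of $\omega$, and in the $\nu_0=1$ case the subgroup preserving the normalized linear part) serve to put each surviving configuration into a unique model.

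Running through the cases: when $\nu_0=1$, if the $1$-jet of the dual vector field is invertible then $\mu_0=1$, a contradiction; if it is nilpotent one checks — by following the reduction, or by bounding directly the intersection number attainable between curves of degree $\leq 3$ — that $\mu_0=7$ cannot be realized, so this sub-case is empty; and if it has a single non-zero eigenvalue, normalizing the linear part of $\omega$ to $x\,\mathrm{d}y$ and imposing $\mu_0=7$ forces, after using the remaining automorphisms, the single model $\Omega_4$. When $\nu_0=2$ one has $\omega=a_2\,\mathrm{d}x+b_2\,\mathrm{d}y+h\,(y\,\mathrm{d}x-x\,\mathrm{d}y)$ with $(a_2,b_2)\neq(0,0)$ and $h\neq 0$ quadratic, and the branching over the tangent cone — a double line, a line together with a transverse direction, two distinct lines, or the degenerate cases in which $a_2(1,v)+v\,b_2(1,v)$ is constant so that all singular directions collapse to $v=\infty$ — yields, after one or two further blow-ups, exactly the three models $\Omega_1,\Omega_2,\Omega_3$. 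It then remains to check that the four are pairwise inequivalent under $\mathrm{PGL}(3;\mathbb{C})$: this is done by comparing their reduction trees (the number of blow-ups needed, the self-intersections of the exceptional components, the incidence of the separatrices and of the line at infinity with them), which are visibly distinct, with $\Omega_4$ already singled out by $\nu_0=1$.

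I expect the $\nu_0=2$ case to be the main obstacle. Reduction of singularities there can take several blow-ups before the configuration becomes reduced, and the bookkeeping of which chains of infinitely near points are simultaneously compatible with ``Milnor number $7$ concentrated at one point'', ``no singularity off the fiber over the origin'' and ``global degree exactly $2$'' produces a genuinely large case split. The key to keeping it finite and clean is to use the Camacho--Sad and Baum--Bott relations on the exceptional curves at every stage to kill the impossible branches \emph{before} normalizing — brute-forcing the coefficients of $a_2$, $b_2$, $h$ directly is possible but unpleasant; once the resolution data are in hand the inequivalence statement is comparatively routine.
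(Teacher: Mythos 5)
This statement is not proved in the paper at all: it is imported from \cite{CDGBM}, so there is no in-paper argument to compare yours with, and your proposal has to stand on its own. On its own terms, the outline is the natural one and the numerology is right: the total Milnor number is $d^2+d+1=7$, the balance $\sum_{q\in E}\mu_q=8$ (resp.\ $6$) after a non-dicritical blow-up of a point of algebraic multiplicity $1$ (resp.\ $2$) follows from $\mu_p=\sum_q\mu_q+\nu^2-\nu-1$, the Baum--Bott total is $(d+2)^2=16$ and Camacho--Sad on $E$ gives $-1$. The models do fit your dichotomy: $\Omega_1,\Omega_2,\Omega_3$ have $\nu_0=2$ and $\Omega_4$ has $\nu_0=1$ with $1$-jet $x\,\mathrm{d}y$.

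But as written this is a plan, not a proof, and the places where you write ``one checks'' are exactly where the theorem lives. Concretely: (i) your claim that the radial tangent cone ``has already been excluded'' is unjustified --- what you excluded were $\nu_0=3$ and $\omega$ homogeneous of degree $2$. It is fixable (for $\nu_0=1$ a radial $1$-jet makes the linear part of the dual vector field invertible, so $\mu=1$; for $\nu_0=2$ a radial $2$-jet combined with the fact that the cubic part is itself of the form $h\,(y\,\mathrm{d}x-x\,\mathrm{d}y)$ forces $A$ and $B$ to share a non-constant factor, contradicting $\gcd(A,B)=1$), but the argument must be made, since without it $E$ need not be invariant and Camacho--Sad on $E$ is unavailable. (ii) The emptiness of the nilpotent branch is asserted; a B\'ezout bound only gives $\mu_0\le 9$, which does not exclude $7$, so a genuine argument (carrying out the reduction, or a jet computation using the degree-$2$ constraint and the absence of singularities at infinity) is still owed. (iii) In the $\nu_0=2$ case your enumeration of tangent cones does not match the configurations actually realized (for $\Omega_1$ and $\Omega_2$ the cone $xa_2+yb_2$ is a triple line, for $\Omega_3$ a double line plus a simple line), which indicates the case analysis has not actually been run; the derivation of exactly these three models, and their pairwise non-conjugacy, are deferred --- and the latter is not ``visibly'' read off the reduction trees: to separate $\Omega_1,\Omega_2,\Omega_3$ you will need finer data such as the Camacho--Sad indices along the exceptional components or the nature of the first integrals ($\mathcal{F}_{\Omega_1}$ has a rational first integral, $\mathcal{F}_{\Omega_2}$ and $\mathcal{F}_{\Omega_3}$ do not). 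So the route is viable, but the decisive computations are missing.
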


\begin{pro}\label{Pro:mod}
There exists a quadratic birational map $\psi_1\in\mathscr{O}(\rho)$ such that $\deg\psi_1^*\mathcal{F}_{\Omega_1}=~2$; furthermore $\mathcal{F}_{\Omega_1}$ has a rational first integral and is non-primitive.

\smallskip

For $k=2$, $3$, there is no birational map $\varphi_k$ such that $\deg\varphi_k^*\mathcal{F}_{\Omega_k}=0$ but there is $\psi_k$ in $\mathscr{O}(\tau)$ such that $\deg\psi_k^*\mathcal{F}_{\Omega_k}=~1$. In particular $\mathcal{F}_{\Omega_2}$ and $\mathcal{F}_{\Omega_3}$ are non-primitive.

\smallskip

There is a quadratic birational map $\psi_4\in\mathscr{O}(\tau)$ such that $\deg\psi_4^*\mathcal{F}_{\Omega_4}=3$, and~$\mathcal{F}_{\Omega_4}$ is primitive.
\end{pro}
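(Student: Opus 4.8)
The plan is to treat the four foliations one at a time by the same elementary mechanism: homogenise $\Omega_i$ into a homogeneous $1$-form $\omega_i$ of degree $3$ on $\mathbb{P}^2_\mathbb{C}$, compose the standard model $\rho$ or $\tau$ with automorphisms of $\mathbb{P}^2_\mathbb{C}$ so that the (proper and infinitely near) base points of the resulting quadratic map are adapted to the unique singular point of $\mathcal{F}_{\Omega_i}$ together with a little extra transverse data, pull $\omega_i$ back and extract a predicted common polynomial factor $m_i$, so that $\psi_i^*\omega_i=m_i\,\omega_i'$ with $\omega_i'$ homogeneous; one then reads off the degree of the foliation defined by $\omega_i'$. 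The positive assertions ($\deg\psi_1^*\mathcal{F}_{\Omega_1}=2$, $\deg\psi_k^*\mathcal{F}_{\Omega_k}=1$ for $k=2,3$, $\deg\psi_4^*\mathcal{F}_{\Omega_4}=3$) and the ensuing non-primitivity of $\mathcal{F}_{\Omega_1},\mathcal{F}_{\Omega_2},\mathcal{F}_{\Omega_3}$ are then direct computations. The delicate points are the two negative statements: that no birational map sends $\mathcal{F}_{\Omega_k}$ ($k=2,3$) onto a degree-$0$ foliation, and that $\mathcal{F}_{\Omega_4}$ cannot be lowered below degree $2$ at all.

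For $\mathcal{F}_{\Omega_1}$ I would first exhibit the rational first integral $H_1=\dfrac{x^3}{3x^2z-y^3}$: the curves $\{x=0\}$ and $\{3x^2z-y^3=0\}$ are $\mathcal{F}_{\Omega_1}$-invariant with respective cofactors $y^2$ and $3y^2$ (a one-line check on $\Omega_1$), so their Darboux combination $x^3(3x^2z-y^3)^{-1}$ is a first integral. Thus $\mathcal{F}_{\Omega_1}$ is the pencil of cubics $\lambda x^3+\mu(3x^2z-y^3)=0$; its base locus is concentrated at $(0:0:1)$ and its generic member, being singular at that point, is rational. A pencil of rational curves on $\mathbb{P}^2_\mathbb{C}$ is Cremona equivalent to the pencil of lines, so $\deg\varphi^*\mathcal{F}_{\Omega_1}=0$ for a suitable $\varphi$ and in particular $\mathcal{F}_{\Omega_1}$ is non-primitive. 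To realise the intermediate value $\deg\psi_1^*\mathcal{F}_{\Omega_1}=2$ one takes for $\psi_1\in\mathscr{O}(\rho)$ a $\rho$-type map whose base points sit at $(0:0:1)$ and its infinitely near points, so that the pulled-back pencil is a pencil of conics, i.e. a degree-$2$ foliation; the explicit computation with $\rho$ (up to automorphisms) confirms the degree is exactly $2$.

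For $\mathcal{F}_{\Omega_2}$ and $\mathcal{F}_{\Omega_3}$ I would pick explicit maps $\psi_k\in\mathscr{O}(\tau)$ whose base points absorb the unique singular point, compute $\psi_k^*\omega_k=m_k\,\omega_k'$, and check that $\omega_k'$ defines a foliation of degree $1$; this already gives non-primitivity. By the classification of degree-$1$ foliations (they are all defined by closed rational $1$-forms, as recalled in the preliminaries) $\omega_k'$ is, up to an automorphism, a logarithmic form $\lambda_0\frac{\mathrm{d}\ell_0}{\ell_0}+\lambda_1\frac{\mathrm{d}\ell_1}{\ell_1}+\lambda_2\frac{\mathrm{d}\ell_2}{\ell_2}$ with $\lambda_0+\lambda_1+\lambda_2=0$; reading the residues $\lambda_i$ off the $1$-form (equivalently, the eigenvalue data at the singular points of $\mathcal{F}_{\omega_k'}$) one checks that they are $\mathbb{Q}$-incommensurable, whence $\mathcal{F}_{\Omega_k}$ admits no rational first integral. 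Since a degree-$0$ foliation is a pencil of lines and hence admits a rational first integral, and since admitting a rational first integral is invariant under birational pull-back, no $\varphi_k$ can satisfy $\deg\varphi_k^*\mathcal{F}_{\Omega_k}=0$.

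Finally, for $\mathcal{F}_{\Omega_4}$ the same recipe with a well-chosen $\psi_4\in\mathscr{O}(\tau)$ gives $\psi_4^*\omega_4=m_4\,\omega_4'$ with $\omega_4'$ of degree $3$. The heart of the matter — and the step I expect to be the main obstacle — is the primitivity of $\mathcal{F}_{\Omega_4}$: one must rule out $\deg\phi^*\mathcal{F}_{\Omega_4}\le 1$ for \emph{every} birational $\phi$. The plan is to argue that such a $\phi$ would present $\mathcal{F}_{\Omega_4}$ as birationally equivalent to a foliation of degree $\le 1$, hence — those being defined by closed rational $1$-forms, a property preserved by pulling the form back through $\phi^{-1}$ — $\mathcal{F}_{\Omega_4}$ would itself be defined by a closed rational $1$-form $\eta$ on $\mathbb{P}^2_\mathbb{C}$; the polar divisor of $\eta$ is then an $\mathcal{F}_{\Omega_4}$-invariant algebraic curve, and one reaches a contradiction by showing $\mathcal{F}_{\Omega_4}$ has no invariant algebraic curve at all (any such curve would pass through the unique singular point, and an analysis of its separatrices / Camacho–Sad indices after reduction of singularities excludes an algebraic one), so that $\eta$ would be a nonzero holomorphic $1$-form on $\mathbb{P}^2_\mathbb{C}$ — impossible. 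Equivalently, one may show directly that $\mathcal{F}_{\Omega_4}$ is of general type (a Kodaira-dimension computation after resolving its singularity, or an appeal to \cite{CDGBM}, \cite{Brunella}), which already forbids any birational reduction to a closed $1$-form, hence to degree $\le 1$.
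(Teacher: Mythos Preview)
Your plan matches the paper's almost step for step: explicit $\psi_i$ in the right $\mathscr{O}(\cdot)$-class, direct computation of the pulled-back degree, a rational first integral for $\mathcal{F}_{\Omega_1}$, and for $\mathcal{F}_{\Omega_4}$ the reduction ``degree $\le 1 \Rightarrow$ closed rational $1$-form $\Rightarrow$ invariant algebraic curve'', concluded by the fact (quoted from \cite{CDGBM}) that $\mathcal{F}_{\Omega_4}$ has none. Your formula $H_1=x^3/(3x^2z-y^3)$ is exactly the first integral implicit in the paper's cubic map $(x^3:x^2y:x^2z+y^3/3)$.

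There is one genuine inaccuracy in your treatment of $k=2,3$. You assert that the degree-$1$ foliation $\mathcal{F}_{\omega_k'}$ is, up to automorphism, logarithmic along three lines with $\mathbb{Q}$-incommensurable residues. That fails here: the $\omega_k'$ one actually obtains (the paper finds $(xz-yz)\,\mathrm{d}x+xz\,\mathrm{d}y-x^2\,\mathrm{d}z$ for $k=2$ and $y(z-x)\,\mathrm{d}x+x^2\,\mathrm{d}y-xy\,\mathrm{d}z$ for $k=3$) have only two singular points in $\mathbb{P}^2_\mathbb{C}$, hence are in the confluent (non-logarithmic) stratum of $\mathbb{F}(2;1)$; the closed $1$-form defining them has a pole of order $2$ along $x=0$, not three simple logarithmic poles. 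So the ``incommensurable residues'' test does not apply as stated. The paper sidesteps this by writing down explicit Liouvillian first integrals of $\mathcal{F}_{\Omega_2}$ and $\mathcal{F}_{\Omega_3}$ themselves, namely $\big(2+\tfrac{1}{x}+\tfrac{2y}{x}+\tfrac{y^2}{x^2}\big)e^{-y/x}$ and $\tfrac{y}{x}\,e^{\,y^2/(2x^2)-1/x}$, and notes that these are visibly non-rational; since a degree-$0$ foliation has a rational first integral and this property is birationally invariant, no $\varphi_k$ can achieve degree $0$. Your argument is easily repaired along the same lines (or by analysing the order-$2$ pole of the closed form instead of residues), but as written the key step is not correct.
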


\begin{rem}
If $\phi=(x^2:xy:xz+y^2)$, then $\deg\phi^*\mathcal{F}_{\Omega_2}=\deg\phi^*\mathcal{F}_{\Omega_3}=2$. A contrario we will see later there is no quadratic birational map $\phi$ such that $\deg\phi^*\mathcal{F}_{\Omega_4}=2$ (\emph{see} Corollary~\ref{Cor:omega4}). 
\end{rem}

\begin{cor}
For any element $\mathcal{F}$ of $\mathbb{F}(2;2)$ with exactly one singularity there exists a quadratic birational map $\psi$ such that $\deg\psi^*\mathcal{F}\leq 3$.
\end{cor}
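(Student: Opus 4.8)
The plan is to combine the classification of Theorem~\ref{Thm:class1sing} with the case-by-case analysis carried out in Proposition~\ref{Pro:mod}, using only the fact that both the degree of a pull-back foliation and the property of being a quadratic birational map are invariant under the $\mathrm{Aut}(\mathbb{P}^2_\mathbb{C})\times\mathrm{Aut}(\mathbb{P}^2_\mathbb{C})$-action.

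First I would invoke Theorem~\ref{Thm:class1sing}: given $\mathcal{F}\in\mathbb{F}(2;2)$ with exactly one singular point, there exist an automorphism $\ell\in\mathrm{Aut}(\mathbb{P}^2_\mathbb{C})$ and an index $i\in\{1,2,3,4\}$ such that $\ell^*\mathcal{F}=\mathcal{F}_{\Omega_i}$.

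Next, Proposition~\ref{Pro:mod} supplies for each $i$ a quadratic birational map $\psi_i$ (lying in $\mathscr{O}(\rho)$ when $i=1$ and in $\mathscr{O}(\tau)$ when $i\in\{2,3,4\}$) with $\deg\psi_i^*\mathcal{F}_{\Omega_i}\leq 3$ — the respective values being $2$, $1$, $1$, $3$. I then set $\psi=\ell\circ\psi_i$. The composition of a quadratic birational map with an automorphism of $\mathbb{P}^2_\mathbb{C}$ is again a quadratic birational map: its homogeneous components are invertible linear combinations of those of $\psi_i$, hence of degree $2$ and still without common factor, so $\psi$ is indeed quadratic. Finally $\psi^*\mathcal{F}=\psi_i^*\ell^*\mathcal{F}=\psi_i^*\mathcal{F}_{\Omega_i}$, whence $\deg\psi^*\mathcal{F}=\deg\psi_i^*\mathcal{F}_{\Omega_i}\leq 3$, which is the claim.

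There is no genuine obstacle here: the corollary is an immediate consequence of the two preceding results, and the only point deserving a word of care is the invariance of the pull-back degree under pre- and post-composition by automorphisms, which is routine. One may moreover remark that the bound $3$ is attained for $\mathcal{F}_{\Omega_4}$, hence sharp within this class, by the remark following Proposition~\ref{Pro:mod} together with Corollary~\ref{Cor:omega4}.
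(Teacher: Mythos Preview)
Your argument is correct and is exactly the reasoning the paper intends: the corollary is stated without proof in the paper, as it follows immediately from Theorem~\ref{Thm:class1sing} and Proposition~\ref{Pro:mod} via the conjugation step you spell out. The extra remarks on sharpness and on the orbit types of the $\psi_i$ are accurate but not needed for the statement.
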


\begin{proof}[Proof of Proposition \ref{Pro:mod}]
The foliation $\mathcal{F}_{\Omega_1}$  is given in homogeneous coordinates by 
\[
\Omega'_1=(x^2z-y^3)\,\mathrm{d}x + xy^2\,\mathrm{d}y-x^3\,\mathrm{d}z;
\]
if $\psi_1\colon(x:y:z)\dashrightarrow(x^2:xy:yz)$ then 
\[
\psi_1^*\Omega'_1\wedge\big(y(2xz-y^2)\,\mathrm{d}x+x(y^2-xz)\,\mathrm{d}y-x^2y\,\mathrm{d}z\big)=0.
\]
The foliation $\mathcal{F}_{\Omega_1}$ has a rational first integral and is non-primitive, it is the image of a foliation of degree $0$ by a cubic birational map: 
\[
(x^3: x^2y: x^2z+y^3/3)^*\Omega'_1\wedge\big(z\,\mathrm{d}x-x\,\mathrm{d}z\big)=0.
\]

\smallskip

The foliation $\mathcal{F}_{\Omega_2}$ is described in homogeneous coordinates by 
\[
\Omega'_2=(x^2z-xyz-y^3)\,\mathrm{d}x +x(xz+y^2)\,\mathrm{d}y-x^3\,\mathrm{d}z;
\]
let us consider the birational map $\psi_2\colon(x:y:z)\dashrightarrow(x^2:xy:xz-2x^2-2xy-y^2)$ then 
\[
\psi_2^*\Omega'_2\wedge\big((xz-yz)\,\mathrm{d}x+xz\,\mathrm{d}y-x^2\,\mathrm{d}z\big)=0.
\]
One can verify that 
\[
\left(2+\frac{1}{x}+2\frac{y}{x}+\frac{y^2}{x^2}\right)\exp\left(-\frac{y}{x}\right)
\]
is a first integral of $\mathcal{F}_{\Omega_2}$; it is easy to see that $\mathcal{F}_{\Omega_2}$ has no rational first integral so there is no birational map $\varphi_2$ such that $\deg\varphi_2^*\mathcal{F}_{\Omega_2}=0$. 

\smallskip

The foliation $\mathcal{F}_{\Omega_3}$ is given in homogeneous coordinates by the $1$-form 
\[
\Omega'_3 =y(xz-x^2-y^2)\,\mathrm{d}x +x(x^2+y^2)\,\mathrm{d}y-x^2y\,\mathrm{d}z; 
\]
if $\psi_3\colon(x:y:z)\dashrightarrow(x^2:xy:xz+y^2/2)$
then 
\[
\psi_3^*\Omega'_3\wedge \big(y(z-x)\,\mathrm{d}x+x^2\,\mathrm{d}y-xy\,\mathrm{d}z\big)=0.
\]
The function
\[
\left(\frac{y}{x}\right)\exp\left(\frac{1}{2}\frac{y^2}{x^2}-\frac{1}{x}\right)
\]
is a first integral of $\mathcal{F}_{\Omega_3}$, and $\mathcal{F}_{\Omega_3}$ has no rational first integral so there is no birational map $\varphi_3$ such that $\deg\varphi_3^*\mathcal{F}_{\Omega_3}=~0$. 

\smallskip

Let us consider the birational map of $\mathbb{P}^2_\mathbb{C}$ given by
\[
\psi_4\colon(x:y:z)\dashrightarrow(-x^2:xy:y^2-xz)
\]
In homogeneous coordinates 
\[
\Omega'_4=x(xz+y^2)\,\mathrm{d}x+(xz^2+y^2z-x^2y)\,\mathrm{d}y+(xyz-y^3-x^3)\,\mathrm{d}z; 
\]
a direct computation shows that $\psi_4^*\Omega'_4\wedge\eta=0$ where
\[
\eta=(3y^3z-x^2y^2+x^3z-2xyz^2)\,\mathrm{d}x+(x^3y-4y^4-x^2z^2+3xy^2z)\,\mathrm{d}y
\]
\[
+x(2y^3-x^3-xyz)\,\mathrm{d}z.
\]
The foliation $\mathcal{F}_{\Omega_4}$ has no invariant algebraic curve so $\mathcal{F}_{\Omega_4}$ is not transversely projective (\cite[Proposition 1.3]{CDGBM}). In fact a foliation of degree $2$ without invariant algebraic curve is primitive (\emph{see} Introduction); as a consequence $\mathcal{F}_{\Omega_4}$ is a primitive foliation.
\end{proof}

\section{Numerical invariance}\label{Sec:numinv}

In the sequel num. inv. means numerically invariant.

\smallskip

In this section we determine the foliations $\mathcal{F}$ of $\mathbb{F}(2;2)$ num. inv. under the action of $\sigma$ (resp. $\rho$, resp.~$\tau$). Note that if $\phi$ is a birational map of $\mathbb{P}^2_\mathbb{C}$ and $\ell$ an element of $\mathrm{Aut}(\mathbb{P}^2_\mathbb{C})$ then $\deg(\phi\ell)^*\mathcal{F}=\deg\phi^*\mathcal{F}$; hence following Theorem \ref{Thm:decomp} we get the description of foliations num. inv. under the action of a quadratic birational map of $\mathbb{P}^2_\mathbb{C}$ by giving normal forms.

Recall that $\sigma$ is given in a fixed system of homogeneous coordinates $(x:y:z)$ by 
\[
\sigma\colon(x:y:z)\dashrightarrow(yz:xz:xy),
\]
and remark that $\sigma$ is invariant under conjugacy by elements of the group $\mathfrak{S}_3$ of standard permutations of coordinates.

\begin{lem}\label{Lem:sigma}
An element $\mathcal{F}$ of $\mathbb{F}(2;2)$ is num. inv. under the action of $\sigma$ if and only if it is given up to permutations of coordinates and standard affine charts by $1$-forms of the following type
\smallskip
\begin{itemize}
\item[$\bullet$] either $\omega_1=y\big(\kappa+\varepsilon y\big)\,\mathrm{d}x+\big(\beta x+\delta y+\alpha x^2+\gamma xy\big)\,\mathrm{d}y,$
\smallskip
\item[$\bullet$] or $\omega_2=\big(\delta+\beta y+\kappa y^2 \big)\,\mathrm{d}x+\big(\alpha+\varepsilon x+\gamma x^2\big)\,\mathrm{d}y,$
\end{itemize}
\smallskip
where $\alpha$, $\beta$, $\gamma$, $\delta$, $\varepsilon$, $\kappa$ $($resp. $\alpha$, $\beta$, $\gamma$, $\delta$, $\varepsilon$, $\kappa)$ are complex numbers such that $\deg\mathcal{F}_{\omega_1}=2$ $($resp. $\deg\mathcal{F}_{\omega_2}=2)$.
\end{lem}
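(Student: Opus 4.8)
The strategy is a direct computation organized around the geometry of $\sigma$, exploiting the fact that $\sigma$ is an involution with three base points $p_1=(1:0:0)$, $p_2=(0:1:0)$, $p_3=(0:0:1)$ and three contracted lines $\{x=0\}$, $\{y=0\}$, $\{z=0\}$. The plan is as follows. First I would write a general foliation $\mathcal{F}\in\mathbb{F}(2;2)$ in homogeneous coordinates by a $1$-form $\omega=a\,\mathrm{d}x+b\,\mathrm{d}y+c\,\mathrm{d}z$ with $a,b,c$ homogeneous of degree $3$ satisfying the Euler relation $xa+yb+zc=0$, and compute $\sigma^*\omega$, where $\sigma^*$ is induced by $(x:y:z)\mapsto(yz:xz:xy)$. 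A direct substitution shows $\sigma^*\omega$ is a homogeneous $1$-form whose coefficients are divisible by a common monomial factor; after removing it one gets a $1$-form $\widetilde\omega$ of degree $\deg\sigma^*\mathcal{F}+1$. The condition $\deg\sigma^*\mathcal{F}=2$ is then the statement that this reduced $1$-form has degree $3$, i.e. that an \emph{a priori} higher-degree piece of $\sigma^*\omega$ is actually divisible by an extra factor.

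Second, I would make this divisibility condition explicit. Since $\sigma$ is quadratic with simple base points, the generic degree of $\sigma^*\mathcal{F}$ is $(d+1)k+k-2=3\cdot2+2-2=6$ for $d=k=2$; numerical invariance forces a drop of $4$ in the degree, which translates into vanishing of certain coefficients of $\omega$ along the exceptional configuration of $\sigma$. Concretely, the drop is governed by the orders of tangency of $\mathcal{F}$ with the three contracted lines $\{x=0\}$, $\{y=0\}$, $\{z=0\}$ and by the nature of the singularities of $\mathcal{F}$ at the three base points $p_1,p_2,p_3$ (e.g. whether $\mathcal{F}$ is singular there, and if so with which multiplicity/radiality). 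I expect the degree to drop by $4$ precisely when $\mathcal{F}$ has prescribed behaviour — typically two of the $p_i$ being singular of the right type, or one line being invariant — which after imposing these linear/algebraic conditions on the coefficients $(a_i,b_i,c_i)$ and normalizing by $\mathrm{Aut}(\mathbb{P}^2_\mathbb{C})$ (permutation of coordinates plus the torus action) reduces the parameter space to exactly the two families $\omega_1$ and $\omega_2$ displayed in the statement. Conversely, for each $\omega_i$ in the stated normal form one simply verifies by the same explicit computation that $\sigma^*\omega_i$ loses a factor of the expected order, so that $\deg\sigma^*\mathcal{F}_{\omega_i}=2$; this direction is routine once the forms are written down.

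Third, I would organize the classification (the harder, ``only if'' direction) as a short case analysis according to how the base locus of $\sigma$ meets $\mathrm{Sing}(\mathcal{F})$ and the invariant/tangency structure along the contracted lines, each case pinned down by the vanishing of an explicit set of coefficients; in each case the residual moduli, after the coordinate symmetries allowed in the statement (``up to permutations of coordinates and standard affine charts''), match either $\omega_1$ or $\omega_2$. The main obstacle is bookkeeping: keeping the case distinction clean and making sure the two normal forms genuinely exhaust all solutions of the divisibility condition without overlap beyond the allowed symmetry group — in particular checking that no spurious extra family is lost when one dehomogenizes and that the stated open conditions ($\deg\mathcal{F}_{\omega_i}=2$) correctly excise the degenerate loci where $\omega_i$ drops below degree $2$ or becomes non-reduced. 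I would also double-check the count against Theorem~\ref{Thm:decomp} and Theorem~\ref{Thm:gen}, since the $\sigma$-case is the $14$-dimensional generic stratum $\mathscr{O}(\sigma)$ and should produce the ``generic'' transversely projective conclusion.
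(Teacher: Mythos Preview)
Your plan is in the right direction and would eventually succeed, but you miss the one observation that makes the paper's argument almost mechanical. The paper notes that $\sigma$ is an automorphism of $\mathbb{P}^2_\mathbb{C}\smallsetminus\{xyz=0\}$; hence if $\sigma^*\omega=P\,\omega'$ with $\omega'$ a homogeneous $1$-form of degree $3$, the common factor $P$ is necessarily supported on $\{xyz=0\}$, i.e.\ $P=x^iy^jz^k$ with $i+j+k=4$. Up to the permutation of coordinates allowed in the statement this leaves exactly four candidates $P\in\{x^4,\,x^3y,\,x^2y^2,\,x^2yz\}$. Writing $\omega$ in the form
\[
\omega=q_1yz\Big(\frac{\mathrm{d}y}{y}-\frac{\mathrm{d}z}{z}\Big)+q_2xz\Big(\frac{\mathrm{d}z}{z}-\frac{\mathrm{d}x}{x}\Big)+q_3xy\Big(\frac{\mathrm{d}x}{x}-\frac{\mathrm{d}y}{y}\Big)
\]
with the $q_i$ quadratic, a direct check shows that $x^4$ and $x^3y$ cannot divide $\sigma^*\omega$, while $P=x^2yz$ forces the coefficient conditions yielding $\omega_1$ and $P=x^2y^2$ those yielding $\omega_2$.

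Your proposed route through tangency orders along the contracted lines and singularity types at $p_1,p_2,p_3$ is a geometric paraphrase of exactly this divisibility; it would reproduce the same linear equations on the $a_i,b_i,c_i$, but with a heavier case analysis and no a~priori finiteness. The monomial observation replaces your open-ended singularity/tangency discussion by a four-item list, two of whose entries die immediately. One small caution: the normalisation you invoke (``permutation of coordinates plus the torus action'') is more than the statement grants --- only the $S_3$-permutation and the choice of standard affine chart are used, and no torus rescaling is needed once $P$ is fixed.
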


\begin{proof}
The foliation $\mathcal{F}$ is defined by a homogeneous $1$-form $\omega$ of degree $3$.
The map $\sigma$ is an automorphism of $\mathbb{P}^2_\mathbb{C}\smallsetminus\{xyz=0\}$; hence if $\sigma^*\omega=P\omega'$, with $\omega'$ a $1$-form of degree $3$ and $P$ a homogeneous polynomial, then $P=x^iy^jz^k$ for some integers $i$, $j$, $k$ such that $i+j+k=4$. Up to permutation of coordinates it is sufficient to look at the four following cases: $P=x^4$, $P=x^3y$, $P=x^2y^2$ and $P=x^2yz$. Let us write~$\omega$ as in (\ref{eq:formenormalefeuilletage}). Computations show that $x^4$  (resp. $x^3y$) cannot divide $\sigma^*\omega$. If $P=x^2yz$, then $\sigma^*\omega=P\omega'$ if and only~if 
\[
c_0=b_0=a_2=b_2=a_1=c_1=b_4=c_3=0,\quad b_3=c_4
\]
that gives $\omega_1$.
Finally one has $\sigma^*\omega=x^2y^2\omega'$ if and only if 
\[
c_1=c_0=b_0=a_1=b_4=c_3=a_5=0, \quad b_3=c_4,\quad c_5=a_3;
\]
in that case we obtain $\omega_2$.
\end{proof}

\begin{pro}
A foliation $\mathcal{F}\in\mathbb{F}(2;2)$ num. inv. under the action of an element of $\mathscr{O}(\sigma)$ is $\mathrm{Aut}(\mathbb{P}^2_\mathbb{C})$-conjugate either to a foliation of type $\mathcal{F}_{\omega_1}$, or to a foliation of type $\mathcal{F}_{\omega_2}$. In particular it is transversely projective. 
\end{pro}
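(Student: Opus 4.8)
The plan is to reduce the statement to Lemma~\ref{Lem:sigma} together with the transverse projectivity of the explicit normal forms $\mathcal{F}_{\omega_1}$ and $\mathcal{F}_{\omega_2}$. First I would observe that numerical invariance is a property that only depends on the $\mathrm{Aut}(\mathbb{P}^2_\mathbb{C})\times\mathrm{Aut}(\mathbb{P}^2_\mathbb{C})$-orbit of the birational map on the left and on the $\mathrm{Aut}(\mathbb{P}^2_\mathbb{C})$-orbit of the foliation: indeed, if $\phi'=\ell\phi\ell'$ with $\ell,\ell'\in\mathrm{Aut}(\mathbb{P}^2_\mathbb{C})$, then $(\phi')^*\mathcal{F}=(\ell')^*\phi^*\ell^*\mathcal{F}$, and pulling back by an automorphism does not change the degree; so $\mathcal{F}$ is num. inv. under $\phi'$ iff $\ell^*\mathcal{F}$ is num. inv. under $\phi$. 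Hence $\mathcal{F}$ is num. inv. under some element of $\mathscr{O}(\sigma)$ iff some $\mathrm{Aut}(\mathbb{P}^2_\mathbb{C})$-conjugate of $\mathcal{F}$ is num. inv. under $\sigma$ itself. By Lemma~\ref{Lem:sigma} that conjugate is, up to permutation of coordinates and choice of standard affine chart (which are themselves automorphisms of $\mathbb{P}^2_\mathbb{C}$), one of $\mathcal{F}_{\omega_1}$ or $\mathcal{F}_{\omega_2}$.

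Next I would check that each of the normal forms $\mathcal{F}_{\omega_1}$, $\mathcal{F}_{\omega_2}$ is transversely projective. The quickest route is to notice that both $\omega_1$ and $\omega_2$ are Riccati-type $1$-forms in a suitable coordinate system: in $\omega_1=y(\kappa+\varepsilon y)\,\mathrm{d}x+(\beta x+\delta y+\alpha x^2+\gamma xy)\,\mathrm{d}y$ the coefficient of $\mathrm{d}x$ is quadratic in $y$ and the coefficient of $\mathrm{d}y$ is affine in $x$, so after exchanging the roles of the variables the foliation is given by $\mathrm{d}x=(\text{polynomial of degree }\le 2\text{ in }x)\,\mathrm{d}y$ with coefficients rational (in fact polynomial) in $y$; that is exactly a Riccati equation, and Riccati foliations are transversely projective (one exhibits the $\mathfrak{sl}(2;\mathbb{C})$-triplet as in the Riccati definition recalled above). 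Similarly $\omega_2=(\delta+\beta y+\kappa y^2)\,\mathrm{d}x+(\alpha+\varepsilon x+\gamma x^2)\,\mathrm{d}y$ is, up to dividing by $\alpha+\varepsilon x+\gamma x^2$, a Riccati equation $\mathrm{d}y/\mathrm{d}x=-(\delta+\beta y+\kappa y^2)/(\alpha+\varepsilon x+\gamma x^2)$, hence transversely projective. One must of course be slightly careful about the degenerate parameter values where these expressions drop degree, but then either the $1$-form still defines a Riccati foliation with fewer terms, or the foliation has degree $<2$ (excluded by hypothesis), or a singular point becomes radial or of vanishing $1$-jet, in which case the Proposition proved earlier in the excerpt (resp. the Remark following it) applies.

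Since transverse projectivity is stable under birational conjugacy (as used already in the proof of that earlier Proposition), and $\mathcal{F}$ is $\mathrm{Aut}(\mathbb{P}^2_\mathbb{C})$-conjugate to one of the two normal forms, $\mathcal{F}$ itself is transversely projective, which is the claim.

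The main obstacle is the bookkeeping in the second step: verifying that for \emph{every} admissible choice of the parameters $\alpha,\beta,\gamma,\delta,\varepsilon,\kappa$ — including the degenerate strata where the leading coefficients vanish — the foliation really is Riccati (or falls under the radial/zero-$1$-jet criteria), and exhibiting the $\mathbb{P}^1$-bundle, the foliation $\mathcal{G}$ transverse to the generic fibre, and the meromorphic section in each case. This is essentially a case analysis rather than a conceptual difficulty, but it is where the argument has to be made airtight; I would organize it by first treating the generic case via the explicit $\mathfrak{sl}(2;\mathbb{C})$-triplet attached to a Riccati $1$-form, and then listing the finitely many degenerations and pointing to the appropriate already-proved statement for each.
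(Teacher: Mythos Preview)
Your argument is correct and follows exactly the paper's route: reduce to $\sigma$ via the $\mathrm{Aut}(\mathbb{P}^2_\mathbb{C})\times\mathrm{Aut}(\mathbb{P}^2_\mathbb{C})$-action, invoke Lemma~\ref{Lem:sigma}, and observe that $\omega_1$ and $\omega_2$ are Riccati $1$-forms (the paper's entire second step is the sentence ``the $1$-forms of Lemma~\ref{Lem:sigma} are Riccati ones (up to multiplication)''). Two minor points: the coefficient of $\mathrm{d}y$ in $\omega_1$ is \emph{quadratic}, not affine, in $x$---which is precisely what makes the equation Riccati in $x$---and your concern about degenerate parameter strata is unnecessary, since the Riccati description breaks down only when the coefficient of $\mathrm{d}x$ (resp.\ of $\mathrm{d}y$ for $\omega_2$) vanishes identically, and then the foliation has degree $<2$, contrary to hypothesis.
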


\begin{proof}
Let $\phi$ be an element of $\mathscr{O}(\sigma)$ such that $\deg\phi^*\mathcal{F}=2$; the map $\phi$ can be written $\ell_1\sigma\ell_2$ where~$\ell_1$ and $\ell_2$ denote automorphisms of $\mathbb{P}^2_\mathbb{C}$. By assumption the degree of $(\ell_1\sigma\ell_2)^*\mathcal{F}=\ell_2^*(\sigma^*(\ell_1^*\mathcal{F}))$ is~$2$. Hence $\deg \sigma^*(\ell_1^*\mathcal{F})=2$ and the foliation~$\ell_1^*\mathcal{F}$ is num. inv. under the action of $\sigma$. Since $\ell_1^*\mathcal{F}$ and $\mathcal{F}$ are conjugate and since the notion of transversal projectivity is invariant by conjugacy it is sufficient to establish the statement for $\phi=\sigma$. The proposition thus follows from the fact that $1$-forms of Lemma \ref{Lem:sigma} are Riccati ones (up to multiplication).
\end{proof}

\begin{rem}
For generic values of parameters $\alpha$, $\beta$, $\gamma$, $\delta$, $\varepsilon$, $\kappa$ a foliation of type~$\mathcal{F}_{\omega_1}$ given by the corresponding form $\omega_1$ is not given by a closed meromorphic $1$-form. This can be seen by studying the holonomy group of $\mathcal{F}_{\omega_1}$ that can be identified with a subgroup of $\mathrm{PGL}(2;\mathbb{C})$ generated by two elements $f$ and $g$. For generic values of the parameters $f$ and $g$ are also generic, in particular the group $\langle f,\,g\rangle$ is free. When $\mathcal{F}_{\omega_1}$ is given by a closed $1$-form, then the holonomy group is an abelian one.

Remark that a contrario the foliations given by $1$-forms of type $\omega_2$ are given by a closed meromorphic $1$-form.
\end{rem}

\begin{rem}
Let $\Delta_i$ denote the closure of the set of elements of $\mathbb{F}(2;2)$ conjugate to a foliation of type~$\mathcal{F}_{\omega_i}$. 

In the affine chart $x=1$ an element of type $\mathcal{F}_{\omega_1}$ is radial at $(0,0)$ as soon as $\alpha\not=0$. Since $\Delta_R$ is closed, the inclusion $\Delta_1\subset\Delta_R$ holds.

If the components of $\omega_2$ are not constant, then an element of type $\mathcal{F}_{\omega_2}$ has a singular point in $\mathbb{C}^2$, and up to an ad-hoc translation $\mathcal{F}_{\omega_2}$ is an element of type $\mathcal{F}_{\omega_1}$. As $\Delta_1$ is closed, one has $\Delta_2\subset\Delta_1$.
\end{rem}

\begin{rem}
The notion of num. inv. is not related to the dynamic of the map (\emph{see}~\cite{CF} for example): the foliations num. inv. by the involution $\sigma$ ("without dynamic") are conjugate to the foliations num. inv. by $A\sigma$, $A\in\mathrm{Aut}(\mathbb{P}^2_\mathbb{C})$, which has a rich dynamic for generic $A$.
\end{rem}

The foliations of $\mathbb{F}(2;2)$ invariant by $\sigma$ are particular cases of num. inv. foliations:

\begin{pro}\label{Pro:invsigma}
An element of $\mathbb{F}(2;2)$ invariant by $\sigma$ is given up to permutations of coordinates and affine charts
\smallskip
\begin{itemize}
\item[$\bullet$] either by $y(1+y)\,\mathrm{d}x+(\beta x+\alpha y+\alpha x^2+\beta xy)\,\mathrm{d}y,$
\smallskip
\item[$\bullet$] or by $y(1-y)\,\mathrm{d}x+(\beta x-\alpha y+\alpha x^2-\beta xy)\,\mathrm{d}y,$
\smallskip
\item[$\bullet$] or by $y\,\mathrm{d}x+(\alpha+\varepsilon x+\alpha x^2)\,\mathrm{d}y,$
\end{itemize}
\smallskip
where the parameters are complex numbers such that the degree of the associated foliations is $2$.
\end{pro}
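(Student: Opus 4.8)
The plan is to bootstrap off Lemma~\ref{Lem:sigma}. A foliation invariant by $\sigma$ is \emph{a fortiori} numerically invariant by $\sigma$, so by Lemma~\ref{Lem:sigma} it is, up to permutations of coordinates and standard affine charts, of type $\mathcal{F}_{\omega_1}$ or $\mathcal{F}_{\omega_2}$. Changing the standard affine chart does not alter the foliation, and since $\sigma$ is symmetric in the three coordinates it commutes with every coordinate permutation; hence the reduction furnished by Lemma~\ref{Lem:sigma} preserves the property of being $\sigma$-invariant, and we may assume from the start that $\mathcal{F}=\mathcal{F}_{\omega_1}$ or $\mathcal{F}=\mathcal{F}_{\omega_2}$. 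It then only remains to decide, among these two families, for which values of the parameters one has the stronger property $\sigma^*\mathcal{F}=\mathcal{F}$ rather than merely $\deg\sigma^*\mathcal{F}=2$.

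The computational engine is the elementary identity, valid in homogeneous coordinates, $\sigma^*\eta_1=-x^2\eta_1$ together with its cyclic analogues, where $\eta_1=z\,\mathrm{d}y-y\,\mathrm{d}z$, $\eta_2=x\,\mathrm{d}z-z\,\mathrm{d}x$ and $\eta_3=y\,\mathrm{d}x-x\,\mathrm{d}y$. Writing $\omega=q_1\eta_1+q_2\eta_2+q_3\eta_3$ one gets
\[
\sigma^*\omega=-x^2\,q_1(yz,xz,xy)\,\eta_1-y^2\,q_2(yz,xz,xy)\,\eta_2-z^2\,q_3(yz,xz,xy)\,\eta_3 .
\]
Since $\sigma$ is an involutive automorphism of $\mathbb{P}^2_\mathbb{C}\smallsetminus\{xyz=0\}$, the condition $\sigma^*\mathcal{F}=\mathcal{F}$ is equivalent to the identical vanishing of the $2$-form $\omega\wedge\sigma^*\omega$; by the divisibility analysis in the proof of Lemma~\ref{Lem:sigma} it is also equivalent to $\sigma^*\omega$ being a nonzero scalar multiple of $x^iy^jz^k\,\omega$ for some $i+j+k=4$. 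Using the relation $x\eta_1+y\eta_2+z\eta_3=0$ one rewrites $\omega\wedge\sigma^*\omega$ as an explicit polynomial times $x\,\mathrm{d}y\wedge\mathrm{d}z+y\,\mathrm{d}z\wedge\mathrm{d}x+z\,\mathrm{d}x\wedge\mathrm{d}y$, so $\sigma$-invariance reduces to the vanishing of one polynomial in the coefficients of $\omega$.

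Next I would feed in the two families. For an $\omega_1$-type form (which in the proof of Lemma~\ref{Lem:sigma} arises from the case $P=x^2yz$) one has $\sigma^*\omega_1=x^2yz\,\omega_1'$ with $\omega_1'$ a cubic $1$-form defining $\sigma^*\mathcal{F}_{\omega_1}$, so genuine $\sigma$-invariance is $\omega_1'\wedge\omega_1=0$, a polynomial system in $\alpha,\beta,\gamma,\delta,\varepsilon,\kappa$. I expect the solutions to be governed by sign relations among the parameters (of the shape $\varepsilon=\pm\kappa$, $\delta=\pm\alpha$, $\gamma=\pm\beta$ with correlated signs); using the rescalings of $x$ and $y$ that stabilize the shape of $\omega_1$ together with the overall scaling of the form, the $\mathrm{d}x$-coefficient normalizes to $y(1+y)$ or $y(1-y)$ and one is left with the two two-parameter families displayed. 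Treating the $\omega_2$-type forms the same way (they come from $P=x^2y^2$, so $\sigma^*\omega_2=x^2y^2\,\omega_2'$ and one imposes $\omega_2'\wedge\omega_2=0$) produces the third form $y\,\mathrm{d}x+(\alpha+\varepsilon x+\alpha x^2)\,\mathrm{d}y$.

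The hard part is the final step: the systems $\omega_i'\wedge\omega_i=0$ must be solved completely, and the solution sets must then be cut down modulo the automorphisms that remain available — the coordinate permutations, the diagonal sign changes, and the rescalings of $x$ and $y$ preserving the shapes of $\omega_1$ and $\omega_2$ — so as to land on exactly the three stated normal forms. The delicate point is to identify which of the surviving foliations are genuinely inequivalent, so that nothing is omitted and nothing is listed twice; everything else is bookkeeping.
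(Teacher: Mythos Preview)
Your approach is correct and essentially the same as the paper's: reduce via Lemma~\ref{Lem:sigma} to the families $\omega_1$, $\omega_2$ and then impose $\sigma^*\omega_i\wedge\omega_i=0$. The paper simply computes $\sigma^*\omega_i$ directly in the affine chart (e.g.\ $\sigma^*\omega_1=-y(\varepsilon+\kappa y)\,\mathrm{d}x-(\gamma x+\alpha y+\delta x^2+\beta xy)\,\mathrm{d}y$), so that $\sigma^*\omega_i$ is again of the same type with permuted parameters and your anticipated ``hard part'' collapses immediately to the sign relations you predicted.
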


\begin{proof}
With the notations of Lemma \ref{Lem:sigma} one has 
\[
\sigma^*\omega_1=-y(\varepsilon+\kappa y)\,\mathrm{d}x-(\gamma x+\alpha y+\delta x^2+\beta xy)\,\mathrm{d}y;
\]
thus $\sigma^*\omega_1\wedge\omega_1=0$ if and only if either $\gamma = \beta$, $\delta = \alpha$, $\varepsilon = \kappa$, or $\gamma = -\beta$, $\delta = -\alpha$, $\varepsilon = -\kappa$. 

One has $\sigma^*\omega_2=-(\kappa+\beta y+\delta y^2)\,\mathrm{d}x-(\gamma+\varepsilon x+\alpha x^2)\,\mathrm{d}y$, and $\omega_2\wedge\sigma^*\omega_2=0$ if and only if $\gamma = \alpha$, $\delta = 0$ and $\kappa = 0$.
\end{proof}

\begin{rem}
The foliations associated to the two first $1$-forms with parameters $\alpha$, $\beta$ of Proposition \ref{Pro:invsigma} are conjugate by the automorphism $(x,y)\mapsto(x,-y)$.
\end{rem}

\begin{lem}\label{Lem:rho}
A foliation $\mathcal{F}\in\mathbb{F}(2;2)$ is num. inv. under the action of $\rho$ if and only if~$\mathcal{F}$ is given in affine chart 
\smallskip
\begin{itemize}
\item[$\bullet$] either by $\omega_3=y\big(\kappa+\varepsilon y+\lambda y^2\big)\,\mathrm{d}x+\big(\beta+\kappa x+\delta y+\gamma xy+\alpha y^2-\lambda xy^2\big)\,\mathrm{d}y,$

\smallskip

\item[$\bullet$] or by $\omega_4=y\big(\mu+\delta x+\gamma y+\varepsilon xy\big)\,\mathrm{d}x+\big(\alpha+\beta x+\lambda y+\delta x^2+\kappa xy-\varepsilon x^2y\big)\,\mathrm{d}y,$

\smallskip

\item[$\bullet$] or by $\omega_5=\big(\lambda+\gamma y+\kappa xy+\varepsilon y^2\big)\,\mathrm{d}x+\big(\beta+\delta x+\alpha x^2\big)\,\mathrm{d}y,$
\end{itemize}
\smallskip
where the parameters of $\omega_i$ are such that $\deg\mathcal{F}_{\omega_i}=2$.
\end{lem}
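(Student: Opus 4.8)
The plan is to repeat the argument used for Lemma~\ref{Lem:sigma}, now with $\rho\colon(x:y:z)\dashrightarrow(xy:z^2:yz)$. Write $\mathcal{F}=\mathcal{F}_\omega$, where $\omega$ is the general homogeneous $1$-form of degree $3$ annihilating the radial vector field, namely
\[
\omega=q_1yz\left(\frac{\mathrm{d}y}{y}-\frac{\mathrm{d}z}{z}\right)+q_2xz\left(\frac{\mathrm{d}z}{z}-\frac{\mathrm{d}x}{x}\right)+q_3xy\left(\frac{\mathrm{d}x}{x}-\frac{\mathrm{d}y}{y}\right)
\]
with $q_1,q_2,q_3$ quadratic forms, exactly as in the proof of Lemma~\ref{Lem:sigma}. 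The map $\rho$ is an involution; its Jacobian is proportional to $yz^2$, so $\rho$ restricts to an automorphism of $\mathbb{P}^2_\mathbb{C}\smallsetminus\{yz=0\}$, contracting the line $\{z=0\}$ onto $(1:0:0)$ and the line $\{y=0\}$ onto $(0:1:0)$. Hence, if $\rho^*\omega=P\,\omega'$ with $\omega'$ a $1$-form whose coefficients have no common factor, the divisor of $P$ is supported on $\{yz=0\}$, so $P$ is, up to a nonzero scalar, a monomial $y^jz^k$. Since $\rho^*\omega$ has coefficients of degree $7$, one has $\deg\rho^*\mathcal{F}=\deg\mathcal{F}_{\omega'}=2$ precisely when $\deg P=4$, that is $j+k=4$.

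Thus $\mathcal{F}$ is num. inv. under $\rho$ if and only if the greatest common factor of the coefficients of $\rho^*\omega$ is a monomial $y^jz^k$ with $j+k=4$. A direct computation gives
\[
\rho^*\omega=y\,\widetilde{A}\,\mathrm{d}x+\big(x\,\widetilde{A}+z\,\widetilde{C}\big)\,\mathrm{d}y+\big(2z\,\widetilde{B}+y\,\widetilde{C}\big)\,\mathrm{d}z,
\]
where $\widetilde{A},\widetilde{B},\widetilde{C}$ are the images of the coefficients of $\omega$ under the substitution $(x,y,z)\mapsto(xy,z^2,yz)$ and have degree $6$. For each of the five monomials $y^4$, $y^3z$, $y^2z^2$, $yz^3$, $z^4$ the divisibility condition is a linear system in the coefficients $a_i,b_i,c_i$ of $\omega$, and one solves these systems exactly as for $\sigma$. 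As in Lemma~\ref{Lem:sigma}, where $x^4$ and $x^3y$ proved impossible for $\sigma$, here some of the five monomials yield no foliation of degree $2$, or one already produced by another monomial, and one is left with exactly three pairwise non-equivalent families.

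It then remains to put these three families in normal form. Here it is convenient to work in the affine chart $z=1$, in which $\rho$ acts as the simple involution $(x,y)\mapsto(x,1/y)$; this makes both the identification and the converse verification transparent. One checks that the three surviving families are precisely those defined, in this chart, by $\omega_3$, $\omega_4$ and $\omega_5$. For the converse, a direct computation shows that for each of these $1$-forms $\rho^*\omega_i$ is divisible by the corresponding degree $4$ monomial, the quotient being a $1$-form of degree $3$; hence $\deg\rho^*\mathcal{F}_{\omega_i}=2$ whenever $\deg\mathcal{F}_{\omega_i}=2$.

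The conceptual input is the same as for $\sigma$; the main effort is bookkeeping. The delicate points are: performing the five divisibility computations on the degree $7$ coefficients of $\rho^*\omega$ without error; and, once the linear systems are solved, selecting coordinates so that each family assumes the compact shape $\omega_3$, $\omega_4$ or $\omega_5$, while verifying that none of the five cases is spurious, that no two of them collapse to the same family, and that the parameter constraints of the statement — together with the implicit non-degeneracy making the pull-back again of degree exactly $2$ — describe precisely the numerically invariant foliations.
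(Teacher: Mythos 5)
Your proposal follows the paper's proof essentially verbatim: same general homogeneous $1$-form $\omega$ in terms of $q_1,q_2,q_3$, same observation that $\rho$ is an automorphism off $\{yz=0\}$ so that the common factor must be a monomial $y^jz^k$ with $j+k=4$, same five-case divisibility analysis, and your explicit formula $\rho^*\omega=y\widetilde{A}\,\mathrm{d}x+(x\widetilde{A}+z\widetilde{C})\,\mathrm{d}y+(2z\widetilde{B}+y\widetilde{C})\,\mathrm{d}z$ and the remark that $\rho$ reads $(x,y)\mapsto(x,1/y)$ in the chart $z=1$ are both correct. The only difference is bookkeeping you leave implicit: the paper records that $y^4$ and $y^3z$ are impossible while $z^4$, $yz^3$, $y^2z^2$ yield exactly $\omega_3$, $\omega_4$, $\omega_5$ (with explicit linear conditions on the coefficients), whereas you leave the outcome of the five linear systems unstated and speak of "selecting coordinates", although no normalization is needed or permitted here --- the three families are just the dehomogenizations of the solution sets in the chart $z=1$.
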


\begin{proof}
Let us take the notations of the proof of Lemma \ref{Lem:sigma}.
The map $\rho$ is an automorphism of $\mathbb{P}^2_\mathbb{C}\smallsetminus\{yz=~0\}$; therefore if $\rho^*\omega=P\omega'$ with $\omega'$ a $1$-form of degree $3$ and $P$ a homogeneous polynomial then $P=y^jz^k$ for some integers $j$, $k$ such that $j+k=4$. We have to look at the five following cases: $P=z^4$, $P=yz^3$, $P=y^2z^2$, $P=y^3z$ and $P=y^4$. 
Computations show that $y^4$ (resp. $y^3z$) cannot divide $\rho^*\omega$. If $P=z^4$ then $\rho^*\omega=P\omega'$ if and only if 
\[
c_0=b_0=c_3=b_4=b_2=0,\quad a_0=c_4,\quad b_3=c_4, \quad a_4=2c_2-b_5;
\]
this gives the first case $\omega_3$.
The equality $\rho^*\omega=yz^3\omega'$ holds if and only if 
\[
b_0=c_0=b_4=c_1=a_1=b_2=0,\quad a_0=2c_4-b_3
\]
and we obtain $\omega_4$.
Finally one has $\rho^*\omega=y^2z^2\omega'$ if and only if 
\[
c_1=b_0=c_3=a_5=a_1=c_0=b_4=0, \quad c_5=a_3
\]
which corresponds to $\omega_5$.
\end{proof}

\begin{pro}
The foliations of type $\mathcal{F}_{\omega_3}$ and $\mathcal{F}_{\omega_5}$ are transversely projective. In fact the $\mathcal{F}_{\omega_3}$ are transversely affine, and the $\mathcal{F}_{\omega_5}$ are Riccati ones.
\end{pro}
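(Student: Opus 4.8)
The plan is to treat the two families $\mathcal{F}_{\omega_3}$ and $\mathcal{F}_{\omega_5}$ separately, since in each case the claimed transverse structure is different. For $\mathcal{F}_{\omega_5}$ the strategy is the most direct: the $1$-form
\[
\omega_5=\big(\lambda+\gamma y+\kappa xy+\varepsilon y^2\big)\,\mathrm{d}x+\big(\beta+\delta x+\alpha x^2\big)\,\mathrm{d}y
\]
is, up to multiplication by the rational function $\frac{1}{\beta+\delta x+\alpha x^2}$, a Riccati $1$-form: writing $\mathrm{d}y+\big(a(x)y^2+b(x)y+c(x)\big)\mathrm{d}x$ with $a(x)=\frac{\varepsilon}{\beta+\delta x+\alpha x^2}$, $b(x)=\frac{\gamma+\kappa x}{\beta+\delta x+\alpha x^2}$ and $c(x)=\frac{\lambda}{\beta+\delta x+\alpha x^2}$, one sees that $\omega_5$ defines a foliation transverse to the generic fiber of the projection $(x,y)\mapsto x$ (the fibers it is tangent to being those over the roots of $\beta+\delta x+\alpha x^2$ and over $x=\infty$). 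By the paragraph following the definition of Riccati foliations, such a foliation is transversely projective; one exhibits explicitly the $\mathfrak{sl}(2;\mathbb{C})$-triplet $\theta_0=\omega_5$ (divided by $\beta+\delta x+\alpha x^2$), $\theta_1=-(2a(x)y+b(x))\,\mathrm{d}x$, $\theta_2=-2a(x)\,\mathrm{d}x$.

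For $\mathcal{F}_{\omega_3}$ the claim is the stronger statement that it is transversely \emph{affine}, i.e.\ that one can produce an $\mathfrak{sl}(2;\mathbb{C})$-triplet with $\theta_2=0$, equivalently that there is a closed rational $1$-form $\eta$ with $\mathrm{d}\omega=\eta\wedge\omega$ (for a suitable rational multiple $\omega$ of $\omega_3$). Looking at
\[
\omega_3=y\big(\kappa+\varepsilon y+\lambda y^2\big)\,\mathrm{d}x+\big(\beta+\kappa x+\delta y+\gamma xy+\alpha y^2-\lambda xy^2\big)\,\mathrm{d}y,
\]
I would first try to guess an integrating denominator: the $\mathrm{d}x$-coefficient factors as $y(\kappa+\varepsilon y+\lambda y^2)$, so dividing $\omega_3$ by $y(\kappa+\varepsilon y+\lambda y^2)$ is the natural candidate, and then one checks directly that the resulting form $\omega_3/\big(y(\kappa+\varepsilon y+\lambda y^2)\big)$ admits a closed logarithmic $1$-form $\eta=\sum_i \lambda_i\,\frac{\mathrm{d}\ell_i}{\ell_i}$ with $\mathrm{d}\omega=\eta\wedge\omega$, where the $\ell_i$ run over the relevant invariant lines (namely $y=0$, the two factors of $\kappa+\varepsilon y+\lambda y^2$, and possibly the line at infinity). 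This makes $\mathcal{F}_{\omega_3}$ transversely affine, hence a fortiori transversely projective.

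The main obstacle I anticipate is purely computational: verifying in the $\mathcal{F}_{\omega_3}$ case that the guessed denominator really is an integrating factor and identifying the closed $1$-form $\eta$ explicitly, since the coefficients of $\omega_3$ mix the variables in a way ($-\lambda xy^2$, $\gamma xy$) that is not a product of a function of $x$ and a function of $y$, so it is not as transparently Riccati as $\omega_5$. One should double-check that the exceptional values of the parameters (those for which $\deg\mathcal{F}_{\omega_3}$ drops below $2$, or for which $\kappa+\varepsilon y+\lambda y^2$ is a square or vanishes) are excluded or handled by a limiting/degenerate sub-case argument, using in the worst case the Proposition on radial singularities together with Remark on singular points with zero $1$-jet. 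Once the triplets (resp.\ the closed form $\eta$) are written down, invoking the discussion after the definition of transversely projective foliations finishes the proof; no further structural input is needed.
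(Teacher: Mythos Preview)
Your approach is correct and matches the paper's: divide $\omega_5$ by $\beta+\delta x+\alpha x^2$ to exhibit it as a Riccati form in $y$, and divide $\omega_3$ by $y(\kappa+\varepsilon y+\lambda y^2)$ to find the transversely affine structure. The only point worth noting is that your anticipated ``computational obstacle'' for $\omega_3$ evaporates once you observe that the $\mathrm{d}y$-coefficient of $\omega_3$ is \emph{linear} in $x$, namely $(\beta+\delta y+\alpha y^2)+x(\kappa+\gamma y-\lambda y^2)$; after dividing, the form becomes $\theta_0=\mathrm{d}x+\big(A(y)+B(y)x\big)\mathrm{d}y$, a linear ODE in $x$, so one can write down $\theta_1=-B(y)\,\mathrm{d}y$ (closed, since it depends only on $y$) and $\theta_2=0$ with no further work---this is exactly what the paper does.
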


\begin{proof}
A foliation of type $\mathcal{F}_{\omega_3}$ is described by the $1$-form
\[
\theta_0=\mathrm{d}x-\frac{\big(\beta+\delta y+\alpha y^2\big)+\big(\kappa +\gamma y-\lambda y^2\big)x}{y\big(\kappa+\varepsilon y+\lambda y^2\big)}\,\mathrm{d}y
\]
and it is transversely affine; to see it consider the $\mathfrak{sl}(2;\mathbb{C})$-triplet 
\[
\theta_0, \quad\quad\quad\theta_1=\frac{\kappa+\gamma y-\lambda y^2}{y(\kappa+\varepsilon y+\lambda y^2)}\,\mathrm{d}y,\quad\quad\quad\theta_2=0.
\]

\smallskip

A foliation of type $\mathcal{F}_{\omega_5}$ is given by 
\[
\mathrm{d}y+\frac{\lambda+(\gamma+\kappa)y+\varepsilon y^2}{\beta+\delta x+\alpha x^2}\,\mathrm{d}x
\]
and thus is a Riccati foliation. In fact the fibration $x/z=$ constant is transverse to~$\mathcal{F}_{\omega_5}$ that generically has three invariant lines. 
\end{proof}

We don't know if the $\mathcal{F}_{\omega_4}$ are transversely projective. For generic values of the para\-meters a foliation of type $\mathcal{F}_{\omega_4}$ hasn't meromorphic uniform first integral in the affine chart $y=1$. Thus if $\mathcal{F}_{\omega_4}$ is transversely projective then it must have an invariant algebraic curve different from $y=0$ (\emph{see} \cite{CLNLPT}). We don't know if it is the case. A foliation of degree $2$ is conjugate to a generic $\mathcal{F}_{\omega_4}$ (by an automorphism of~$\mathbb{P}^2_\mathbb{C}$) if and only if it has an invariant line (say $y=0$) with a singular point (say $0$) and local model $2x\,\mathrm{d}y-y\,\mathrm{d}x$. The closure of the set of such foliations has codimension~$2$. Note that the three families $\mathcal{F}_{\omega_3}$, $\mathcal{F}_{\omega_4}$ and $\mathcal{F}_{\omega_5}$ have non trivial intersection. The set $\overline{\{\mathcal{F}_{\omega_4}\}}$ contains many interesting elements such that the famous Euler foliation given by $y^2\,\mathrm{d}x+(y-x)\,\mathrm{d}y$; this foliation is transversely affine but is not given by a closed rational $1$-form.

\begin{pro}
A foliation $\mathcal{F}\in\mathbb{F}(2;2)$ num. inv. under the action of an element of $\mathscr{O}(\rho)$ is conjugate to a foliation either of type $\mathcal{F}_{\omega_3}$, or of type $\mathcal{F}_{\omega_4}$, or of type~$\mathcal{F}_{\omega_5}$.
\end{pro}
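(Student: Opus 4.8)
The plan is to mimic exactly the argument already used for $\mathscr{O}(\sigma)$. Fix an element $\phi$ of $\mathscr{O}(\rho)$ with $\deg\phi^*\mathcal{F}=2$. By Theorem~\ref{Thm:decomp} one can write $\phi=\ell_1\rho\ell_2$ with $\ell_1$, $\ell_2\in\mathrm{Aut}(\mathbb{P}^2_\mathbb{C})$. First I would use the identity $(\ell_1\rho\ell_2)^*\mathcal{F}=\ell_2^*\bigl(\rho^*(\ell_1^*\mathcal{F})\bigr)$ together with the fact that a linear change of coordinates does not change the degree of a foliation; this gives $\deg\rho^*(\ell_1^*\mathcal{F})=\deg\phi^*\mathcal{F}=2$, so the foliation $\mathcal{F}':=\ell_1^*\mathcal{F}$ is numerically invariant under the action of $\rho$ itself. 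Since $\mathcal{F}'$ and $\mathcal{F}$ are $\mathrm{Aut}(\mathbb{P}^2_\mathbb{C})$-conjugate, it suffices to treat the case $\phi=\rho$.

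The core input is then Lemma~\ref{Lem:rho}: a foliation in $\mathbb{F}(2;2)$ numerically invariant under the action of $\rho$ is given, in a suitable affine chart and up to permutation of coordinates, by one of the three normal forms $\omega_3$, $\omega_4$, $\omega_5$. One subtlety is that Lemma~\ref{Lem:rho} normalizes "up to permutations of coordinates and standard affine charts", whereas the statement to be proved only claims conjugacy by an element of $\mathrm{Aut}(\mathbb{P}^2_\mathbb{C})$; but permutations of coordinates and changes of standard affine chart are themselves elements of $\mathrm{PGL}(3;\mathbb{C})=\mathrm{Aut}(\mathbb{P}^2_\mathbb{C})$, so this causes no loss. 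Putting the two steps together, $\mathcal{F}=(\ell_1^{-1})^*\mathcal{F}'$ where $\mathcal{F}'$ is conjugate (by a permutation/affine-chart change) to one of $\mathcal{F}_{\omega_3}$, $\mathcal{F}_{\omega_4}$, $\mathcal{F}_{\omega_5}$, hence so is $\mathcal{F}$, which is exactly the assertion.

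There is essentially no hard part here: the proposition is a purely formal consequence of Theorem~\ref{Thm:decomp} (Noether-type decomposition of $\mathscr{O}(\rho)$), of the $\mathrm{Aut}(\mathbb{P}^2_\mathbb{C})\times\mathrm{Aut}(\mathbb{P}^2_\mathbb{C})$-invariance of the numerical invariance condition (noted at the beginning of \S\ref{Sec:numinv}: $\deg(\phi\ell)^*\mathcal{F}=\deg\phi^*\mathcal{F}$, and likewise on the left after transporting the foliation), and of the classification carried out in Lemma~\ref{Lem:rho}. The only point requiring a word of care is the reduction from $\mathscr{O}(\rho)$ to $\rho$: one must make sure to move the foliation by $\ell_1$ \emph{before} invoking the lemma, exactly as in the $\sigma$-case, rather than trying to normalize $\phi$ and $\mathcal{F}$ simultaneously. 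Once that is observed, the proof is a two-line assembly; I would write it out in the same compressed style as the analogous proposition for $\mathscr{O}(\sigma)$ above.
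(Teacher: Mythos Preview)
Your proposal is correct and is exactly the argument the paper intends: the paper states this proposition without proof, relying implicitly on the identical reduction already spelled out for $\mathscr{O}(\sigma)$ (write $\phi=\ell_1\rho\ell_2$, transport $\mathcal{F}$ by $\ell_1$, then invoke Lemma~\ref{Lem:rho}). One small inaccuracy: Lemma~\ref{Lem:rho}, unlike Lemma~\ref{Lem:sigma}, does \emph{not} normalize ``up to permutations of coordinates'' (since $\rho$, contrary to $\sigma$, has no coordinate symmetry), so that subtlety you raise does not actually arise here.
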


Let us look at special num. inv. foliations, those invariant by $\rho$.

\begin{pro}
An element of $\mathbb{F}(2;2)$ invariant by $\rho$ is given by a $1$-form of one of the following type
\smallskip
\begin{itemize}
\item[$\bullet$] $y (1-y)\,\mathrm{d}x+(\beta+x)\mathrm{d}y,$
\smallskip
\item[$\bullet$] $y^2\,\mathrm{d}x+(-1+y)\mathrm{d}y,$
\smallskip
\item[$\bullet$] $y(1-y)(\gamma+\delta x)\,\mathrm{d}x+(1+y)(\alpha+\beta x+\delta x^2)\,\mathrm{d}y,$
\smallskip
\item[$\bullet$] $y(1+y)(\gamma+\delta x)\,\mathrm{d}x+(1-y)(\alpha+\beta x+\delta x^2)\,\mathrm{d}y,$
\smallskip
\item[$\bullet$] $(1-y^2)\,\mathrm{d}x+(\beta+\delta x+\alpha x^2)\,\mathrm{d}y,$
\end{itemize}
\smallskip
where the parameters are complex numbers such that the degree of the associated foliations is $2$.
\end{pro}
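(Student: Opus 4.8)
The plan is to use Lemma~\ref{Lem:rho} as a starting point. Any foliation invariant by $\rho$ is a fortiori numerically invariant by $\rho$, so by Lemma~\ref{Lem:rho} it is, in an affine chart, defined by a $1$-form of one of the three types $\omega_3$, $\omega_4$, $\omega_5$; it then remains to decide which members of each of these three families are genuinely invariant, and the five $1$-forms of the statement are convenient representatives for the answer.

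First I would observe that in the chart $z=1$ the involution $\rho\colon(x:y:z)\dashrightarrow(xy:z^2:yz)$ is the monomial map $(x,y)\mapsto(x,1/y)$, so that $\rho^{*}\mathrm{d}x=\mathrm{d}x$ and $\rho^{*}\mathrm{d}y=-y^{-2}\,\mathrm{d}y$; since $\rho\circ\rho=\mathrm{id}$, a foliation $\mathcal{F}=\mathcal{F}_{\omega}$ is invariant by $\rho$ exactly when $\rho^{*}\omega\wedge\omega=0$. For $i\in\{3,4,5\}$ I would substitute $(x,1/y)$ into $\omega_i$ and clear the pole along $\{y=0\}$ by the relevant power of $y$, obtaining a polynomial $1$-form $\widetilde\omega_i$ defining $\rho^{*}\mathcal{F}_{\omega_i}$. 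One checks that $\widetilde\omega_i$ has the same type and the same bidegree as $\omega_i$ --- writing the $\mathrm{d}x$- and $\mathrm{d}y$-coefficients as polynomials in $y$ with coefficients in $\mathbb{C}[x]$, passing to $\widetilde\omega_i$ simply reverses the order of those $y$-coefficients and changes the sign of the $\mathrm{d}y$-component --- and that, by numerical invariance, $\widetilde\omega_i$ is reduced. Hence invariance of $\mathcal{F}_{\omega_i}$ amounts to $\widetilde\omega_i=c\,\omega_i$ for some constant $c$; this forces $c=\pm1$, and in each of the two cases it becomes a system of linear equations on the parameters (describing the $\pm1$-eigenspaces of the involution above). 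Solving these, discarding the loci where the surviving $1$-form no longer defines a degree-$2$ foliation, and finally applying an automorphism of $\mathbb{P}^2_\mathbb{C}$ centralising $\rho$ (together with the freedom $\omega\sim c\,\omega$) to bring each family into simplest shape, yields the five forms listed; conversely each of these five manifestly satisfies $\rho^{*}\omega\wedge\omega=0$.

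The only genuine difficulty is bookkeeping, and it lives in the family $\mathcal{F}_{\omega_4}$, which carries eight parameters: the condition $\widetilde\omega_4=c\,\omega_4$ breaks into several branches, and one must take care not to drop a solution when one of the coefficient polynomials vanishes identically, and, conversely, to recognise when a foliation occurs in two of $\mathcal{F}_{\omega_3}$, $\mathcal{F}_{\omega_4}$, $\mathcal{F}_{\omega_5}$ so that it is recorded only once. Everything else is a direct computation; alternatively one could forgo Lemma~\ref{Lem:rho} and impose $\rho^{*}\omega\wedge\omega=0$ on a general degree-$2$ foliation, at the price of one larger system.
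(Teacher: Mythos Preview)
The paper gives no proof of this proposition; it is stated and followed immediately by a corollary. Your approach---start from the numerically invariant families $\omega_3$, $\omega_4$, $\omega_5$ of Lemma~\ref{Lem:rho}, compute $\rho^*\omega_i$ in the affine chart where $\rho$ reads $(x,y)\mapsto(x,1/y)$, clear denominators, and impose $\widetilde\omega_i=c\,\omega_i$ with $c=\pm 1$---is exactly the method the authors use in the proof of the analogous Proposition~\ref{Pro:invsigma} for $\sigma$, and is clearly what they have in mind here as well. Your structural observation that $\omega_i\mapsto\widetilde\omega_i$ is a linear involution (forcing $c^2=1$) is correct and makes the casework transparent.

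One point worth tightening: the three families $\omega_3$, $\omega_4$, $\omega_5$ overlap (the paper itself remarks that they ``have non trivial intersection''), so the $\pm 1$-eigenspaces you extract from the three families are not disjoint, and the five listed forms are obtained only after identifying the redundant cases and normalising. For instance, the $c=+1$ eigenspace inside the $\omega_3$-family yields $y(\kappa+\varepsilon y+\kappa y^2)\,\mathrm{d}x+(1-y^2)(\beta+\kappa x)\,\mathrm{d}y$, which is not literally one of the five displayed forms; one still has to match it (via the centraliser of $\rho$ and degenerations such as $\kappa=0$) with the appropriate item on the list. You acknowledge this bookkeeping issue for $\omega_4$; it is present, if milder, for $\omega_3$ too.
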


\begin{cor}
An element of $\mathbb{F}(2;2)$ invariant by $\rho$ is defined by a closed $1$-form. 
\end{cor}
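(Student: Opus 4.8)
The plan is to deduce the corollary directly from the preceding proposition, which reduces the assertion to a finite verification: for each of the five explicit normal forms listed there one exhibits a rational integrating factor making the defining $1$-form closed. In every case the integrating factor is assembled from the invariant algebraic curves of the foliation — the invariant lines $y=0$, $y=1$, $y=-1$ and the invariant conic $\alpha+\beta x+\delta x^2$ (or $\beta+\delta x+\alpha x^2$) — so that the quotient $1$-form splits as a sum of logarithmic pieces $\mathrm{d}\ell/\ell$ along these curves plus a $1$-form depending on a single variable; each summand being closed, so is the quotient.

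Concretely: for the first family $\omega=y(1-y)\,\mathrm{d}x+(\beta+x)\,\mathrm{d}y$, setting $F=\dfrac{xy+\beta}{1-y}$ in the affine chart one checks that $\mathrm{d}F=\dfrac{\omega}{(1-y)^2}$, so this foliation actually admits a rational first integral (a pencil of conics) and is a fortiori defined by the closed — indeed exact — rational $1$-form $\mathrm{d}F$. For the second family $\omega=y^2\,\mathrm{d}x+(y-1)\,\mathrm{d}y$, dividing by $y^2$ gives $\mathrm{d}x+\dfrac{\mathrm{d}y}{y}-\dfrac{\mathrm{d}y}{y^2}$, which is visibly closed (and not exact, the foliation being only transversely affine here). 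For the third and fourth families, dividing by $y(1-y)(\alpha+\beta x+\delta x^2)$, respectively by $y(1+y)(\alpha+\beta x+\delta x^2)$, produces
\[
\frac{(\gamma+\delta x)\,\mathrm{d}x}{\alpha+\beta x+\delta x^2}+\frac{\mathrm{d}y}{y}+\frac{2\,\mathrm{d}y}{1-y}
\qquad\text{and}\qquad
\frac{(\gamma+\delta x)\,\mathrm{d}x}{\alpha+\beta x+\delta x^2}+\frac{\mathrm{d}y}{y}-\frac{2\,\mathrm{d}y}{1+y},
\]
each a sum of a closed $1$-form in $x$ alone and two logarithmic $1$-forms, hence closed. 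Finally, for the fifth family $\omega=(1-y^2)\,\mathrm{d}x+(\beta+\delta x+\alpha x^2)\,\mathrm{d}y$, dividing by $(1-y^2)(\beta+\delta x+\alpha x^2)$ gives $\dfrac{\mathrm{d}x}{\beta+\delta x+\alpha x^2}+\dfrac{\mathrm{d}y}{1-y^2}$, again a sum of closed pieces.

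There is no genuine obstacle: once the normal forms are in hand the argument is a short computation. The only points needing a little care are the choice of integrating factor, which is dictated by the invariant curves (the doubled factor $(1-y)^2$ in the first family is exactly what turns the quotient into an exact form rather than a merely logarithmic one), and the elementary partial-fraction identities such as $\frac{1+y}{y(1-y)}=\frac1y+\frac2{1-y}$ and $\frac{1}{1-y^2}=\frac12\bigl(\frac1{1-y}+\frac1{1+y}\bigr)$ used to display the logarithmic $1$-forms explicitly. One could alternatively quote a general structure theorem for Darbouxian or transversely affine foliations, but this case-by-case check is the most transparent route.
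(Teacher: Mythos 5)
Your proposal is correct and follows exactly the route the paper intends: the corollary is stated as an immediate consequence of the proposition's five normal forms, and your case-by-case integrating factors (with $\mathrm{d}F=\omega/(1-y)^2$ in the first case and the partial-fraction splittings in the others) supply precisely that verification. The only cosmetic slip is calling $\alpha+\beta x+\delta x^2=0$ an invariant conic (it is a pair of vertical lines), which does not affect the computations.
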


\begin{rem}
The third and fourth cases with parameters $\alpha$, $\beta$, $\gamma$, $\delta$ are conjugate by the automorphism $(x,y)\mapsto(x,-y)$.
\end{rem}

From Lemmas \ref{Lem:sigma} and \ref{Lem:rho} one gets the following statement.

\begin{pro}\label{Pro:numinvcurve}
A foliation num. inv. by an element of $\mathscr{O}(\phi)$, with $\phi=\sigma$, $\rho$, preserves an algebraic curve.
\end{pro}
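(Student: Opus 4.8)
The plan is to read off an invariant algebraic curve directly from the explicit normal forms produced in Lemmas~\ref{Lem:sigma} and~\ref{Lem:rho}. I would first invoke the reduction already carried out in the two preceding propositions: if $\mathcal{F}$ is num.\ inv.\ under $\ell_1\sigma\ell_2\in\mathscr{O}(\sigma)$ (resp.\ under $\ell_1\rho\ell_2\in\mathscr{O}(\rho)$) with $\ell_1,\ell_2\in\mathrm{Aut}(\mathbb{P}^2_\mathbb{C})$, then $\ell_1^*\mathcal{F}$ is num.\ inv.\ under $\sigma$ (resp.\ under $\rho$), hence, up to a permutation of the coordinates and a choice of affine chart, is one of the foliations $\mathcal{F}_{\omega_1},\mathcal{F}_{\omega_2}$ (resp.\ $\mathcal{F}_{\omega_3},\mathcal{F}_{\omega_4},\mathcal{F}_{\omega_5}$). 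As leaving an algebraic curve invariant is unaffected by these operations, it is enough to produce one algebraic leaf for each of the five model $1$-forms.

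For $\omega_1$, $\omega_3$ and $\omega_4$ I would simply observe that the coefficient of $\mathrm{d}x$ is divisible by $y$, so that $\omega_i\wedge\mathrm{d}y$ is divisible by $y$ and the line $\{y=0\}$ is $\mathcal{F}$-invariant; this is a genuine curve of $\mathcal{F}$ since, were that coefficient identically zero, the foliation would have degree $\leq 1$, contrary to assumption.

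For $\omega_2$ and $\omega_5$ the natural candidate is the line at infinity of the chart. Both are affine $1$-forms all of whose coefficients have degree $\leq 2$ --- equivalently, they carry no summand $q\,(x\,\mathrm{d}y-y\,\mathrm{d}x)$ with $q$ a non-zero quadratic form. Passing to homogeneous coordinates $(X:Y:Z)$ with $x=X/Z$, $y=Y/Z$, one checks that in a homogeneous $1$-form defining $\mathcal{F}$ the coefficients of $\mathrm{d}X$ and of $\mathrm{d}Y$ are both divisible by $Z$; hence $\{Z=0\}$ is $\mathcal{F}$-invariant, and it is a proper curve of $\mathcal{F}$ because the hypothesis $\deg\mathcal{F}=2$ forces that homogeneous $1$-form to be reduced. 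As an alternative one may use the earlier remarks that $\mathcal{F}_{\omega_2}$ is defined by a closed meromorphic $1$-form --- whose polar divisor is invariant --- and that $\mathcal{F}_{\omega_5}$ is a Riccati foliation, so carries an invariant fibre over a pole of its coefficients.

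I do not expect any serious obstacle: the whole argument is a finite inspection of the lists of Lemmas~\ref{Lem:sigma} and~\ref{Lem:rho}. The only point requiring a word of care is that the ``obvious'' line must be a genuine curve of a degree-$2$ foliation and not the by-product of a degenerate choice of parameters --- for instance $\omega_1$ with $\kappa=\varepsilon=0$, or $\omega_2$ with both coefficients constant --- but every such value contradicts $\deg\mathcal{F}=2$ and is thus ruled out.
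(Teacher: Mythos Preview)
Your proof is correct and follows precisely the route the paper intends: the paper gives no separate argument but presents the proposition as a direct consequence of Lemmas~\ref{Lem:sigma} and~\ref{Lem:rho}, and your case-by-case inspection of $\omega_1,\dots,\omega_5$ simply makes that consequence explicit. The invariant lines you exhibit ($\{y=0\}$ for $\omega_1,\omega_3,\omega_4$ and the line at infinity $\{Z=0\}$ for $\omega_2,\omega_5$) are exactly what the normal forms are set up to display, and your check that the degenerate parameter values are excluded by $\deg\mathcal{F}=2$ is the only point that needed spelling out.
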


\begin{cor}\label{Cor:omega4}
There is no quadratic birational map $\phi$ of the complex projective plane such that $\deg\phi^*\mathcal{F}_{\Omega_4}=~2$.
\end{cor}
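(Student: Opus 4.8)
The plan is to combine the classification results already proved in this section with the rigidity property of $\mathcal{F}_{\Omega_4}$ established in Proposition~\ref{Pro:mod}. Recall that a quadratic birational map $\phi$ lies, by Theorem~\ref{Thm:decomp}, in one of the three orbits $\mathscr{O}(\sigma)$, $\mathscr{O}(\rho)$, $\mathscr{O}(\tau)$; and that numerical invariance is insensitive to post- and pre-composition with automorphisms of $\mathbb{P}^2_\mathbb{C}$, so $\deg\phi^*\mathcal{F}_{\Omega_4}=2$ would force $\mathcal{F}_{\Omega_4}$ (up to $\mathrm{Aut}(\mathbb{P}^2_\mathbb{C})$-conjugacy) to be numerically invariant under $\sigma$, $\rho$, or $\tau$. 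By Proposition~\ref{Pro:numinvcurve}, a foliation numerically invariant under an element of $\mathscr{O}(\sigma)$ or $\mathscr{O}(\rho)$ preserves an algebraic curve. But the last line of the proof of Proposition~\ref{Pro:mod} recalls (from \cite[Proposition 1.3]{CDGBM}) that $\mathcal{F}_{\Omega_4}$ has no invariant algebraic curve. Hence $\mathcal{F}_{\Omega_4}$ is not numerically invariant under any element of $\mathscr{O}(\sigma)\cup\mathscr{O}(\rho)$.

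It remains to rule out $\phi\in\mathscr{O}(\tau)$. Here the relevant input is again Proposition~\ref{Pro:mod}: for $\mathcal{F}_{\Omega_4}$ there is a map $\psi_4\in\mathscr{O}(\tau)$ with $\deg\psi_4^*\mathcal{F}_{\Omega_4}=3$, which is the \emph{minimal} degree attained — the foliation is primitive, so $\deg\phi^*\mathcal{F}_{\Omega_4}\geq 2=\deg\mathcal{F}_{\Omega_4}$ for every $\phi$, but we need the sharper fact that the value $2$ is never reached. Since $\mathcal{F}_{\Omega_4}$ is primitive, $\deg\phi^*\mathcal{F}_{\Omega_4}=2$ would again be a case of numerical invariance; and then, by the proposition preceding Corollary~\ref{Cor:omega4} (the classification of foliations numerically invariant under an element of $\mathscr{O}(\rho)$) together with the analogous statement for $\mathscr{O}(\tau)$-numerically invariant foliations obtained from the Lemma~\ref{Lem:sigma}/Lemma~\ref{Lem:rho} style computation for $\tau$, one finds that every such foliation has either an invariant algebraic curve or a radial (or zero-$1$-jet) singular point, and is therefore transversely projective. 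Since $\mathcal{F}_{\Omega_4}$ is not transversely projective (no invariant algebraic curve, \cite[Proposition 1.3]{CDGBM}), this is impossible.

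I expect the main obstacle to be the case $\phi\in\mathscr{O}(\tau)$: unlike $\sigma$ and $\rho$, the excerpt does not explicitly record a classification of $\tau$-numerically invariant foliations, so one must either carry out the direct pull-back computation $\tau^*\omega=P\omega'$ (exactly as in the proofs of Lemmas~\ref{Lem:sigma} and \ref{Lem:rho}, examining which monomials $P$ can divide $\tau^*\omega$) and check that each resulting normal form admits an invariant algebraic curve or a radial singularity, or else invoke the fact — used repeatedly above — that $\mathcal{F}_{\Omega_4}$ is characterized among degree-$2$ foliations by having no invariant algebraic curve, so that \emph{any} numerically invariant pairing with a quadratic map would have to be with a foliation distinct from $\mathcal{F}_{\Omega_4}$. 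In the writeup I would present the argument in this second, cleaner form: $\mathcal{F}_{\Omega_4}$ is primitive, so $\deg\phi^*\mathcal{F}_{\Omega_4}=2$ means num.\ inv.; all num.\ inv.\ foliations under a quadratic map preserve an algebraic curve (Proposition~\ref{Pro:numinvcurve} handles $\sigma,\rho$, and the $\tau$-computation the remaining case); $\mathcal{F}_{\Omega_4}$ does not; contradiction.
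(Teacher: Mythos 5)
Your handling of the cases $\phi\in\mathscr{O}(\sigma)$ and $\phi\in\mathscr{O}(\rho)$ is exactly the paper's argument: $\mathcal{F}_{\Omega_4}$ has no invariant algebraic curve, while by Proposition~\ref{Pro:numinvcurve} any foliation numerically invariant under an element of $\mathscr{O}(\sigma)\cup\mathscr{O}(\rho)$ preserves one. The gap is in the case $\phi\in\mathscr{O}(\tau)$. You propose to conclude by showing that every foliation numerically invariant under $\tau$ (the family $\mathcal{F}_{\omega_6}$ of Lemma~\ref{Lem:tau}) has an invariant algebraic curve or a radial/zero-$1$-jet singularity, hence is transversely projective, and then to use that $\mathcal{F}_{\Omega_4}$ is not transversely projective. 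But this intermediate claim is not available: the paper states explicitly, right after Lemma~\ref{Lem:tau}, that the qualitative description of the $\mathcal{F}_{\omega_6}$ is unknown --- it is not known whether they are transversely projective, and the existence of invariant algebraic curves for them is open. So the step ``every $\tau$-numerically invariant foliation preserves an algebraic curve'' is precisely the point that cannot be cited or deduced from the material at hand, and your sketch does not supply a computation replacing it. The paper instead disposes of the $\mathscr{O}(\tau)$ case by a direct (and admittedly tedious) computation showing that no $\phi\in\mathscr{O}(\tau)$ satisfies $\deg\phi^*\mathcal{F}_{\Omega_4}=2$; equivalently, one checks by hand that $\mathcal{F}_{\Omega_4}$ is not $\mathrm{Aut}(\mathbb{P}^2_\mathbb{C})$-conjugate to any $\mathcal{F}_{\omega_6}$. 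Your first alternative (redoing the $\tau^*\omega=P\omega'$ computation) is the right direction, but as stated you would then still need the unproved curve/radial dichotomy for the resulting normal forms; the honest route is the direct comparison with $\mathcal{F}_{\Omega_4}$ itself.

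A secondary slip: in your ``cleaner form'' you assert that $\mathcal{F}_{\Omega_4}$ is characterized among degree-$2$ foliations by having no invariant algebraic curve. That is false --- a generic foliation of degree $2$ has no invariant algebraic curve; $\Omega_4$ is only singled out among the degree-$2$ foliations with exactly one singular point (Theorem~\ref{Thm:class1sing}). Moreover the argument built on it (``any numerically invariant pairing with a quadratic map would have to involve a foliation distinct from $\mathcal{F}_{\Omega_4}$'') is circular in the $\tau$ case, since it presupposes that no $\tau$-numerically invariant foliation is curve-free, which is exactly what is not known. Also note that the appeal to primitivity is superfluous: $\deg\phi^*\mathcal{F}_{\Omega_4}=2=\deg\mathcal{F}_{\Omega_4}$ is numerical invariance by definition.
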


\begin{proof}
The foliation $\mathcal{F}_{\Omega_4}$ has no invariant algebraic curve (\cite[Proposition 1.3]{CDGBM}); according to Proposition~\ref{Pro:numinvcurve} it is thus sufficient to show that there is no birational map $\phi\in\mathscr{O}(\tau)$ such that $\deg\phi^*\mathcal{F}_{\Omega_4}=2$ that can be established with  a direct and tedious computation.
\end{proof}

\begin{rem}
The map $\rho$ can be written $\ell_1\sigma \ell_2\sigma \ell_3$ with 
\begin{align*}
& \ell_1=(z-y:y-x:y), &&\ell_2=(y+z:z:x), &&\ell_3=(x+z:y-z:z).
\end{align*}
We are interested by the "intermediate" degrees of a numerically invariant foliation~$\mathcal{F}$, that is the sequence $\deg\mathcal{F}$, $\deg(\ell_1\sigma)^*\mathcal{F}$, $\deg(\ell_1\sigma\ell_2\sigma\ell_3)^*\mathcal{F}=\deg\mathcal{F}$. A tedious computation shows that for generic values of the parameters the sequence is $2$, $5$, $2$. We schematize this fact by the diagram
\[
\xymatrix{& 5\ar[dr] &\\
2\ar[ur] & &2}
\]
\end{rem}

A similar argument to Lemma \ref{Lem:sigma} yields to the following result.

\begin{lem}\label{Lem:tau}
An element $\mathcal{F}$ of $\mathbb{F}(2;2)$ is num. inv. under the action of $\tau$ if and only if $\mathcal{F}$ is given in affine chart by a $1$-form of type
\begin{eqnarray*}
\omega_6&=&\big(-\delta x+\alpha y-\varepsilon x^2+\theta xy+\beta y^2+\kappa x^2y+\mu xy^2+\lambda y^3\big)\,\mathrm{d}x\\
&&\hspace{1cm}+\big(-3\alpha x+\xi x^2+2(\delta-\beta)xy+\alpha y^2-\kappa x^3-\mu x^2y-\lambda xy^2\big)\,\mathrm{d}y
\end{eqnarray*}
where the parameters are such that $\deg\mathcal{F}_{\omega_6}=2$.
\end{lem}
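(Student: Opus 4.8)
The plan is to mimic directly the proof of Lemma \ref{Lem:sigma}, since $\tau$ is, like $\sigma$ and $\rho$, a quadratic involution whose base locus and contracted lines are explicitly known. First I would write the foliation $\mathcal{F}$ as a homogeneous $1$-form $\omega$ of degree $3$, using the same trilinear parametrization
\[
\omega=q_1yz\Big(\tfrac{\mathrm{d}y}{y}-\tfrac{\mathrm{d}z}{z}\Big)+q_2xz\Big(\tfrac{\mathrm{d}z}{z}-\tfrac{\mathrm{d}x}{x}\Big)+q_3xy\Big(\tfrac{\mathrm{d}x}{x}-\tfrac{\mathrm{d}y}{y}\Big)
\]
as in Lemmas \ref{Lem:sigma} and \ref{Lem:rho} (one may equally work directly with the generic homogeneous cubic $1$-form $A\,\mathrm{d}x+B\,\mathrm{d}y+C\,\mathrm{d}z$ subject to Euler's relation, since $\tau$ is not adapted to the coordinate triangle the way $\sigma$ is). The key point is to identify the exceptional divisor of $\tau\colon(x:y:z)\dashrightarrow(x^2:xy:y^2-xz)$: the Jacobian of $\tau$ vanishes on the line $\{x=0\}$ with multiplicity $2$, so the only homogeneous polynomial $P$ with $\tau^*\omega=P\omega'$ and $\deg\omega'=3$ is $P=x^2$ up to scalar.

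The heart of the computation is then: impose $\tau^*\omega=x^2\,\omega'$, expand $\tau^*\omega$ and collect the coefficients of those monomials that would prevent divisibility by $x^2$ (i.e. the terms not divisible by $x$, and the terms divisible by exactly $x$). Setting these coefficients to zero gives a linear system in the eighteen coefficients $a_i,b_i,c_i$; solving it cuts out the linear subspace of num.\ inv.\ foliations. After normalizing and renaming the free parameters, this subspace is exactly the family $\omega_6$ displayed in the statement. Finally, I would check the converse — that every $\omega_6$ of degree $2$ indeed satisfies $\deg\tau^*\mathcal{F}_{\omega_6}=2$ — which is the same divisibility computation run backwards, or simply the observation that the linear conditions were equivalences. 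To match the claim that "a similar argument to Lemma \ref{Lem:sigma} yields" the result, one should also dispense with the other candidate powers of $x$: $\tau^*\omega$ is never divisible by $x^3$ (for then $\omega'$ would have degree $2$, forcing $\mathcal{F}$ to have degree $1$, excluded) and the case where $\tau^*\omega$ carries an extra factor supported on the contracted conic is handled by the same coefficient bookkeeping.

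The main obstacle is purely computational bulk: unlike $\sigma$, the map $\tau$ does not respect the coordinate triangle, so the pullback $\tau^*\omega$ does not split into clean monomial pieces, and the substitution $(x,y,z)\mapsto(x^2,xy,y^2-xz)$ produces a degree-$9$ form whose coefficients must be organized carefully to extract the divisibility-by-$x^2$ conditions. I expect the bookkeeping — tracking which of the $\binom{9+2}{2}$-many monomials obstruct the factorization and verifying that the resulting $8$ or $9$ linear equations have exactly the solution space parametrized by $\alpha,\beta,\delta,\varepsilon,\theta,\xi,\kappa,\mu,\lambda$ — to be the only real work; there is no conceptual difficulty beyond what already appears in Lemmas \ref{Lem:sigma} and \ref{Lem:rho}. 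I would carry it out with a computer algebra check and then simply record the normal form $\omega_6$, noting in passing (as will be used later, and as for $\sigma$ and $\rho$) whether the generic such foliation is transversely projective.
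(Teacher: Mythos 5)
Your overall strategy is the one the paper intends: it gives no separate proof of this lemma, saying only that a similar argument to Lemma~\ref{Lem:sigma} applies, i.e.\ write $\omega$ homogeneously (with the $q_i$-parametrization), observe that any common factor of $\tau^*\omega$ must be supported on the locus where $\tau$ fails to be a local isomorphism, and solve the resulting linear system in the eighteen coefficients. However, your divisibility bookkeeping is wrong at precisely the key step. The Jacobian of $\tau=(x^2:xy:y^2-xz)$ is $-2x^3$, so it vanishes on $\{x=0\}$ to order $3$, not $2$; and since $\omega$ has coefficients of degree $3$ while $\tau$ has degree $2$, the coefficients of $\tau^*\omega$ have degree $3\cdot 2+1=7$ (not $9$), so for $\omega'$ of degree $3$ the factor $P$ must have degree $4$ --- exactly as in Lemma~\ref{Lem:sigma}, where $P=x^iy^jz^k$ with $i+j+k=4$. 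The only curve contracted by $\tau$ is the line $x=0$ (there is no contracted conic; $\tau$ contracts a single line, which is why $\dim\mathscr{O}(\tau)=12$), so the correct condition for numerical invariance is $x^4\mid\tau^*\omega$, not $x^2\mid\tau^*\omega$. As written, ``$\tau^*\omega=x^2\omega'$ with $\deg\omega'=3$'' is impossible on degree grounds, and imposing divisibility by $x^2$ alone (with $\deg\omega'=5$) would only characterize $\deg\tau^*\mathcal{F}\leq 4$, a strictly larger family than the one in the statement. Likewise, the auxiliary cases to exclude concern higher powers of $x$ (e.g.\ $x^5\mid\tau^*\omega$, which would force $\deg\tau^*\mathcal{F}\leq 1$ and is ruled out by the definition of numerical invariance), not $x^3$ versus $x^2$.

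The repair is immediate --- replace $x^2$ by $x^4$, note that $P$ must be a pure power of $x$ because $\tau$ is an isomorphism of $\mathbb{P}^2_\mathbb{C}\smallsetminus\{x=0\}$ onto itself, and rerun the same linear elimination, which then yields the nine-parameter family $\omega_6$, the converse direction being the same equivalences read backwards --- but since the exponent of $x$ is the heart of the computation, it has to be stated correctly before the ``computational bulk'' can produce the claimed normal form.
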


We don't know the qualitative description of foliations of type $\mathcal{F}_{\omega_6}$. For example we don't know if the $\mathcal{F}_{\omega_6}$ are transversely projective. If it is the case, this implies the existence of invariant algebraic curves, and that fact is unknown.

\begin{pro}
A foliation $\mathcal{F}\in \mathbb{F}(2;2)$ num. inv. under the action of an element of $\mathscr{O}(\tau)$ is conjugate to $\mathcal{F}_{\omega_6}$ for suitable values of the parameters.
\end{pro}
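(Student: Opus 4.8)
The plan is to mimic the arguments already given above for $\mathscr{O}(\sigma)$ and $\mathscr{O}(\rho)$, reducing the statement for an arbitrary element of $\mathscr{O}(\tau)$ to the model map $\tau$ itself and then invoking Lemma \ref{Lem:tau}.

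First I would write an element $\phi$ of $\mathscr{O}(\tau)$ as $\phi=\ell_1\tau\ell_2$ with $\ell_1,\ell_2\in\mathrm{Aut}(\mathbb{P}^2_\mathbb{C})$, so that $\phi^*\mathcal{F}=\ell_2^*\big(\tau^*(\ell_1^*\mathcal{F})\big)$. Since the pull-back of a foliation by an automorphism of $\mathbb{P}^2_\mathbb{C}$ preserves the degree, the hypothesis $\deg\phi^*\mathcal{F}=2$ is equivalent to $\deg\tau^*(\ell_1^*\mathcal{F})=2$; that is, the foliation $\ell_1^*\mathcal{F}$ is numerically invariant under the action of $\tau$.

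Next I would apply Lemma \ref{Lem:tau} to $\ell_1^*\mathcal{F}$: being num. inv. under $\tau$, it is, in a suitable affine chart, given by a $1$-form of type $\omega_6$ for appropriate values of the parameters. A change of affine chart being realized by an element of $\mathrm{Aut}(\mathbb{P}^2_\mathbb{C})$, this says exactly that $\ell_1^*\mathcal{F}$ is $\mathrm{Aut}(\mathbb{P}^2_\mathbb{C})$-conjugate to a foliation of type $\mathcal{F}_{\omega_6}$. Composing this conjugacy with the automorphism $\ell_1$, we conclude that $\mathcal{F}$ itself is conjugate to a foliation of type $\mathcal{F}_{\omega_6}$, which is the assertion.

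There is no substantial obstacle: the entire content is the computation carried out in Lemma \ref{Lem:tau}, and what remains is the same conjugation-bookkeeping used for $\sigma$ and $\rho$. The only point deserving care is that the degree-$2$ condition must be read off after the \emph{pre}-composition by $\ell_1$ alone (post-composition by $\ell_2$ being harmless), so that the hypothesis genuinely reduces to numerical invariance under $\tau$ and not under some conjugate of $\tau$.
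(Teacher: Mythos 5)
Your proposal is correct and is essentially the paper's argument: the paper states this proposition without a separate proof, relying on the same conjugation bookkeeping it spelled out for $\mathscr{O}(\sigma)$ (write $\phi=\ell_1\tau\ell_2$, note $\phi^*\mathcal{F}=\ell_2^*(\tau^*(\ell_1^*\mathcal{F}))$, reduce to numerical invariance under $\tau$ itself) combined with Lemma \ref{Lem:tau}. Your extra remark that the hypothesis must be read off after pre-composition by $\ell_1$ alone is exactly the point the paper makes in the $\sigma$ case, so there is nothing to add.
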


Let us describe some special num. inv. foliations under the action of $\tau$, those invariant by $\tau$.

\begin{pro}
An element of $\mathbb{F}(2;2)$ invariant by $\tau$ is given 
\smallskip
\begin{itemize}
\item[$\bullet$] either by 
\begin{small}
\[
\big(-\varepsilon x^2+\theta xy+\beta y^2+\varepsilon xy^2-(\frac{\xi}{2}+\theta) y^3\big)\,\mathrm{d}x+x\big(\xi x-2\beta y-\varepsilon xy+(\frac{\xi}{2}+\theta) y^2\big)\,\mathrm{d}y,
\] 
\end{small}
\item[$\bullet$] or by
\begin{small} 
\[
\big(-\delta x+\alpha y+\frac{3}{2}\delta y^2+\kappa x^2y+\mu xy^2+\lambda y^3\big)\,\mathrm{d}x-\big(3\alpha x+\delta xy-\alpha y^2+\kappa x^3+\mu x^2y+\lambda xy^2\big)\,\mathrm{d}y,
\] 
\end{small}
\end{itemize}
where the parameters are complex numbers such that the degree of the associated foliations is $2$.

The foliations associated to the first $1$-form are transversely affine.
\end{pro}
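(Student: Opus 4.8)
The plan is to mimic the strategy that produced Lemma~\ref{Lem:sigma} and Lemma~\ref{Lem:rho}: compute $\tau^*\omega$ for a generic homogeneous degree~$3$ form $\omega$ describing a foliation $\mathcal{F}\in\mathbb{F}(2;2)$, determine for which $\omega$ the pull-back $\tau^*\omega$ is divisible by a (necessarily monomial) homogeneous factor of degree~$2$, and then impose invariance $\tau^*\omega\wedge\omega=0$ on top of the resulting family $\mathcal{F}_{\omega_6}$. Recall $\tau\colon(x:y:z)\dashrightarrow(x^2:xy:y^2-xz)$. In homogeneous coordinates the exceptional locus of $\tau$ lies in $\{x(\cdots)=0\}$, and one checks (exactly as for $\sigma$ and $\rho$) that the only possible divisor of $\tau^*\omega$ is a power of the linear form(s) contracted by $\tau$; since $\deg\tau^*\omega=6$ and $\deg\mathcal{F}_{\tau^*\omega}$ must drop to~$2$, the relevant case already singled out in Lemma~\ref{Lem:tau} is the one giving $\omega_6$. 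So I would start from the explicit $\omega_6$ of Lemma~\ref{Lem:tau}, which I may assume.

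The first real step is then to compute $\tau^*\omega_6$ and divide out the monomial factor, obtaining a degree~$3$ form $\widetilde{\omega}_6$ of the same shape (this is forced, since $\tau$ preserves numerical invariance of the family by Lemma~\ref{Lem:tau}). The second step is to write down the $2$-form $\widetilde{\omega}_6\wedge\omega_6$ and collect its coefficients as polynomials in $(x,y)$ whose coefficients are polynomials in the parameters $\alpha,\beta,\delta,\varepsilon,\theta,\xi,\kappa,\mu,\lambda$. Setting all of these to zero gives a polynomial system; solving it should produce exactly two components. The third step is to read off, for each component, the surviving normal form. Comparing with the two $1$-forms asserted in the statement, one expects: the first branch corresponds to imposing $\alpha=\delta=0$ (so the linear part of $\omega_6$ in $\mathrm{d}x$ disappears) together with $\mu=\varepsilon$, $\kappa=0$ and $\lambda=-(\tfrac{\xi}{2}+\theta)$, leaving the three-parameter family $(-\varepsilon x^2+\theta xy+\beta y^2+\varepsilon xy^2-(\tfrac{\xi}{2}+\theta)y^3)\,\mathrm{d}x+x(\xi x-2\beta y-\varepsilon xy+(\tfrac{\xi}{2}+\theta)y^2)\,\mathrm{d}y$; the second branch corresponds to $\varepsilon=\theta=\xi=0$, $\beta=\tfrac{3}{2}\delta$, leaving $(-\delta x+\alpha y+\tfrac32\delta y^2+\kappa x^2y+\mu xy^2+\lambda y^3)\,\mathrm{d}x-(3\alpha x+\delta xy-\alpha y^2+\kappa x^3+\mu x^2y+\lambda xy^2)\,\mathrm{d}y$.

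Finally, for the first family I would exhibit the transversely affine structure, i.e. produce an explicit $\mathfrak{sl}(2;\mathbb{C})$-triplet $(\theta_0,\theta_1,0)$ with $\theta_0$ proportional to that $1$-form and $\mathrm{d}\theta_0=\theta_0\wedge\theta_1$; concretely one divides the $1$-form by a suitable product of the invariant factors (the components of the contracted conic $x=0$ and the line(s) the foliation leaves invariant) to make it closed up to a logarithmic correction $\theta_1=\mathrm{d}g/g$. The main obstacle is the middle step: the invariance system $\widetilde{\omega}_6\wedge\omega_6=0$ is a sizeable polynomial system in nine parameters, and disentangling it into precisely the two stated components — checking that no further spurious branch survives and that the degree of the foliation genuinely stays $2$ on each branch (i.e. the homogenized forms have no common linear factor) — is where all the bookkeeping lives; everything else is either quoted from Lemma~\ref{Lem:tau} or a short verification. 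The symmetry $(x,y)\mapsto(x,-y)$-type normalizations and the fact that $\tau$ is an involution (so $\tau^*$ acting on the num.\ inv.\ family is itself an involution, halving the work) should be used to keep the computation manageable.
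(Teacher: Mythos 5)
Your classification step is essentially the paper's (implicit) argument: exactly as in the $\sigma$ and $\rho$ cases, one starts from the normal form $\omega_6$ of Lemma~\ref{Lem:tau} (any $\tau$-invariant foliation is in particular num.\ inv., hence of that type) and writes out $\tau^*\omega_6\wedge\omega_6=0$; the two branches you predict, $\alpha=\delta=\kappa=0$, $\mu=\varepsilon$, $\lambda=-(\tfrac{\xi}{2}+\theta)$ on one hand and $\varepsilon=\theta=\xi=0$, $\beta=\tfrac32\delta$ on the other, do reproduce the two stated $1$-forms, so this part is fine modulo carrying out the elimination.

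The genuine gap is in your last step. Asking for a triplet $(\theta_0,\theta_1,0)$ with $\theta_1=\mathrm{d}g/g$ logarithmic is equivalent to asking that $g\,\theta_0$ be closed, i.e.\ that the foliation be defined by a closed rational $1$-form; this is strictly stronger than transversal affineness and fails for generic parameters in the first family, so your recipe cannot succeed in general (compare the paper's remarks that the Euler foliation is transversely affine but not given by a closed rational $1$-form, and that the generic $\mathcal{F}_{\omega_1}$ is not given by a closed form). Indeed, the first $1$-form has zero $1$-jet at the origin; in the blow-up chart $y=tx$ its strict transform is $P(t)\,\mathrm{d}x+x\big(Q(t)+xR(t)\big)\,\mathrm{d}t$ with $P(t)=-\varepsilon+(\theta+\xi)t-\beta t^2$, $Q(t)=\xi-2\beta t$, a Bernoulli (Riccati) equation whose linear holonomy along the invariant exceptional divisor $x=0$ has multipliers $\exp\big(-2\pi i\,\mathrm{Res}_{t_i}\tfrac{Q}{P}\,\mathrm{d}t\big)$ with generically non-integer residues, which obstructs a rational integrating factor. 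The paper's proof is exactly this geometry: after one blow-up the foliation is transverse to the generic fiber of the Hopf fibration and leaves the exceptional divisor (a section of that fibration) invariant, and a Riccati foliation with an invariant section is transversely affine. If you insist on an explicit triplet, take in the coordinate $t=y/x$ the closed rational form $\theta_1=-\tfrac{Q(t)}{P(t)}\,\mathrm{d}t$ and $\theta_2=0$, with $\theta_0$ the equation written in the chart where the invariant section is at infinity: closed, yes, but in general not logarithmic.
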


\begin{proof}
The $1$-jet at the origin of the $1$-form 
\begin{small}
\[
\omega=\big(-\varepsilon x^2+\theta xy+\beta y^2+\varepsilon xy^2-(\frac{\xi}{2}+\theta) y^3\big)\,\mathrm{d}x+x\big(\xi x-2\beta y-\varepsilon xy+(\frac{\xi}{2}+\theta) y^2\big)\,\mathrm{d}y
\]
\end{small}
is zero so after one blow-up $\mathcal{F}_{\omega}$ is transverse to the generic fiber of the Hopf fibration; furthermore as the exceptional divisor is invariant, $\mathcal{F}_\omega$ is transversely affine.
\end{proof}

\begin{rem}
The map $\tau$ can be written $\ell_1\sigma \ell_2\sigma \ell_3\sigma\ell_2\sigma\ell_4$ with 
\begin{align*}
&\ell_1=(x-y:x-2y:-x+y-z), &&\ell_2=(x+z:x:y), \\
&\ell_3=(-y:x-3y+z:x), && \ell_4=(y-x:z-2x:2x-y).
\end{align*}
Let us consider a foliation $\mathcal{F}$ num. inv. under the action of $\tau$; set $\mathcal{F}'=\ell_1^*\mathcal{F}$. We compute the intermediate degrees: 
\begin{align*}
&\deg\sigma^*\mathcal{F}'=5, &&\deg(\sigma\ell_2\sigma)^*\mathcal{F}'=4, &&\deg(\sigma \ell_3\sigma\ell_2\sigma)^*\mathcal{F}'=5.
\end{align*}
To summarize:
\[
\xymatrix{
& 5\ar[dr] &  &5\ar[ddr] &\\
& & 4\ar[ur] & &\\
2\ar[uur] & & & &2}
\]
\end{rem}

\section{Higher degree}\label{Sec:higherdegree}

We will now focus on similar questions but with cubic birational maps of $\mathbb{P}^2_\mathbb{C}$ and elements of $\mathbb{F}(2;2)$. The generic model of such birational maps is: 
\begin{small}
\[
\Phi_{a,b}\colon (x:y:z)\dashrightarrow \big(x(x^2+y^2+axy+bxz+yz):y(x^2+y^2+axy+bxz+yz):xyz\big)
\]
\end{small}
with $a$, $b\in\mathbb{C}$, $a^2\not=4$ and $2b\not\in\{a\pm\sqrt{a^2-4}\}$.

\begin{lem}\label{Lem:transfbircubgen}
An element $\mathcal{F}$ of $\mathbb{F}(2;2)$ is num. inv. under the action of $\Phi_{a,b}$ if and only if $\mathcal{F}$ is given in affine chart 
\smallskip
\begin{itemize}
\item[$\bullet$] either by $\omega_7=y(\alpha+\gamma y)\,\mathrm{d}x-x(\alpha +\kappa x)\,\mathrm{d}y,$
\smallskip
\item[$\bullet$] or by 
\[
\omega_8=b\big(b^2-ab+1+(a-2b)y+y^2\big)\,\mathrm{d}x+\big((b^2-ab+1)+(ab-2)x+x^2\big)\,\mathrm{d}y,
\]
\end{itemize}
\smallskip
where the parameters are such that $\deg\mathcal{F}_{\omega_7}=\deg\mathcal{F}_{\omega_8}=2$.
\end{lem}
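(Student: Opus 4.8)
The plan is to mimic the computation already carried out for $\sigma$ (Lemma \ref{Lem:sigma}) and $\rho$ (Lemma \ref{Lem:rho}), but now with the cubic map $\Phi_{a,b}$. First I would fix a foliation $\mathcal{F}\in\mathbb{F}(2;2)$ described in homogeneous coordinates by a $1$-form $\omega$ of degree $3$, written in the convenient basis
\[
\omega=q_1yz\left(\frac{\mathrm{d}y}{y}-\frac{\mathrm{d}z}{z}\right)+q_2xz\left(\frac{\mathrm{d}z}{z}-\frac{\mathrm{d}x}{x}\right)+q_3xy\left(\frac{\mathrm{d}x}{x}-\frac{\mathrm{d}y}{y}\right),
\]
with the $q_i$ quadratic forms in $x,y,z$ and $18$ free coefficients. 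The key structural remark is that $\Phi_{a,b}$ is a birational map whose inverse has a known degree and whose exceptional set is supported on $\{xyz=0\}$ together with the conic $x^2+y^2+axy+bxz+yz=0$; in particular, if $\Phi_{a,b}^*\omega=P\,\omega'$ with $\deg\omega'$ minimal, then $P$ is a monomial times a power of that conic, and a degree count (the generic degree of the pull-back is $(2+1)\cdot 3+3-2=10$, so $\deg P=10-3=7$) forces $P$ to lie in a short explicit list. Numerical invariance $\deg\mathcal{F}_{\omega'}=2$ then means $\omega'$ again has degree $3$, i.e. $P$ has degree $7$.

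Next I would run the elimination: for each admissible factor $P$ on that list, expand $\Phi_{a,b}^*\omega$, impose divisibility by $P$, and read off the resulting linear (in the $18$ coefficients of $\omega$, with $a,b$ as parameters) conditions. This is exactly the mechanism that produced the nine-equation systems in the proofs of Lemmas \ref{Lem:sigma} and \ref{Lem:rho}; here one expects one or two surviving branches, which after reverting to an affine chart and normalizing give the two announced normal forms $\omega_7$ and $\omega_8$. The hypotheses $a^2\neq 4$ and $2b\notin\{a\pm\sqrt{a^2-4}\}$ are precisely what guarantee $\Phi_{a,b}$ is birational (not merely rational) and that the conic factor behaves generically, so these are used to discard degenerate branches and to solve the linear systems uniquely. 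I would also double-check that the two forms obtained are genuinely of degree $2$ for generic parameter values (this is the role of the final proviso $\deg\mathcal{F}_{\omega_7}=\deg\mathcal{F}_{\omega_8}=2$ in the statement) and that the converse holds, i.e. each of $\omega_7$, $\omega_8$ is indeed num. inv. under $\Phi_{a,b}$, by an explicit $\wedge$-computation exhibiting $\Phi_{a,b}^*\omega_i = P_i\,\omega_i'$ with $\deg\omega_i'=3$.

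The main obstacle is purely computational bulk: unlike the quadratic involutions $\sigma,\rho,\tau$, the map $\Phi_{a,b}$ has degree $3$ and carries two parameters, so $\Phi_{a,b}^*\omega$ is a $1$-form of degree $10$ in three variables whose coefficients are polynomials in $a,b$ and linear in the $18$ unknowns; extracting the divisibility conditions by hand is heavy, and the case analysis over the possible monomial/conic factors $P$ must be done carefully so that no branch is lost and the exceptional locus of $\Phi_{a,b}$ is correctly accounted for. I would organize this by first determining $P$ from the behaviour of $\omega$ along the three coordinate lines and the conic (a local computation at a general point of each component of the exceptional set), which cuts the case list down to essentially the two relevant ones, and only then carry out the full substitution. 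The transversal-structure consequences (Liouvillian integrability, non-primitivity of Theorem \ref{Thm:degre3}) are not needed here; Lemma \ref{Lem:transfbircubgen} is exactly the classification step, and its proof is "a direct, if lengthy, computation" of the type already illustrated above.
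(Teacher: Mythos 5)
Your strategy is the one the paper itself uses for Lemmas \ref{Lem:sigma} and \ref{Lem:rho} (and tacitly for this lemma, which is stated without a printed proof): write $\omega$ with its $18$ coefficients, note that any divisorial factor $P$ of $\Phi_{a,b}^*\omega$ must be supported on the curves contracted by $\Phi_{a,b}$, enumerate the admissible $P$, and solve the resulting linear systems in the coefficients. However, two points of your set-up need correcting, and the first would derail the case analysis as written. The degree count is off by one: a degree-$2$ foliation is defined by a homogeneous $1$-form with coefficients of degree $3$, so $\Phi_{a,b}^*\omega$ has coefficients of degree $3\cdot 3+2=11$ (the generic pull-back foliation of degree $10$ corresponds to a form of degree $11$). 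Numerical invariance therefore forces $\deg P=11-3=8$ (equivalently, $\deg P$ is the drop in foliation degree, $10-2=8$), not $7$: you subtracted a foliation degree from a form degree. Imposing divisibility by a factor of degree $7$ only characterizes $\deg\Phi_{a,b}^*\mathcal{F}\leq 3$, so the list of possible $P$ and the ensuing elimination must be redone at total degree $8$.

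Second, the support of $P$ is slightly different from what you state: the curves contracted by $\Phi_{a,b}$ are $x=0$, $y=0$ and the conic $x^2+y^2+axy+bxz+yz=0$ (sent respectively to $(0:1:0)$, $(1:0:0)$ and $(0:0:1)$), whereas the line $z=0$ is invariant, not contracted, and hence cannot divide $\Phi_{a,b}^*\omega$; thus $P=x^iy^j\big(x^2+y^2+axy+bxz+yz\big)^k$ with $i+j+2k=8$, and including $z$ in the monomial part would only add vacuous cases. With these corrections your plan is exactly the computation that produces $\omega_7$ and $\omega_8$, including the needed verification of the converse direction, and the hypotheses $a^2\neq 4$, $2b\notin\{a\pm\sqrt{a^2-4}\}$ play the role you assign them, namely keeping $\Phi_{a,b}$ in the generic cubic family so that the contracted conic and the base points are in general position.
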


\begin{rem}
Remark that the foliations $\mathcal{F}_{\omega_7}$ do not depend on the parameters of~$\Phi_{a,b}$, that is, the $\mathcal{F}_{\omega_7}$ are num. inv. by all $\Phi_{a,b}$, whereas the $\mathcal{F}_{\omega_8}$ only depend on~$a$ and $b$.

Furthermore $\mathcal{F}_{\omega_7}$ is num. inv. by $\sigma$ and $\rho$.
\end{rem}

\begin{pro}\label{Pro:transfbircubgen}
Any $\mathcal{F}\in\mathbb{F}(2;2)$ num. inv. under the action of $\Phi_{a,b}$, and more generally any $\mathcal{F}\in\mathbb{F}(2;2)$ num. inv. under the action of a generic cubic birational map of $\mathbb{P}^2_\mathbb{C}$, satisfies the following properties:
\smallskip
\begin{itemize}
\item[$\bullet$] $\mathcal{F}$ is given by a rational closed $1$-form;
\smallskip
\item[$\bullet$] $\mathcal{F}$ is non-primitive.
\end{itemize}
\end{pro}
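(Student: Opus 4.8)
\textbf{Proof proposal for Proposition \ref{Pro:transfbircubgen}.}
The plan is to use Lemma \ref{Lem:transfbircubgen}, which reduces the question to the two explicit families $\mathcal{F}_{\omega_7}$ and $\mathcal{F}_{\omega_8}$, together with the decomposition statement for $\mathrm{\mathring{B}ir}_3$ quoted above (the closure of $\mathscr{X}=\{\mathscr{O}(\Phi_{a,b})\}$ is all of $\mathrm{\mathring{B}ir}_3$). First I would treat $\mathcal{F}_{\omega_7}$: the $1$-form $\omega_7=y(\alpha+\gamma y)\,\mathrm{d}x-x(\alpha+\kappa x)\,\mathrm{d}y$ is visibly of the form $yA(y)\,\mathrm{d}x-xB(x)\,\mathrm{d}y$ with $A,B$ affine, so dividing by $xy(\alpha+\gamma y)(\alpha+\kappa x)$ one gets $\omega_7/\big(xy(\alpha+\gamma y)(\alpha+\kappa x)\big)$ equal to a $\mathbb{C}$-linear combination of $\mathrm{d}x/x$, $\mathrm{d}y/y$, $\mathrm{d}(\alpha+\gamma y)/(\alpha+\gamma y)$, $\mathrm{d}(\alpha+\kappa x)/(\alpha+\kappa x)$, hence is closed; thus $\mathcal{F}_{\omega_7}$ is defined by a closed rational $1$-form with (generically) the logarithmic first integral $x^{-1}y(\alpha+\kappa x)^{\bullet}(\alpha+\gamma y)^{\bullet}$. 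The same computation handles $\mathcal{F}_{\omega_8}$: it has the shape $P(y)\,\mathrm{d}x+Q(x)\,\mathrm{d}y$ with $P,Q$ quadratic, and dividing by $P(y)Q(x)$ produces $\mathrm{d}x/Q(x)+\mathrm{d}y/P(y)$, a sum of two closed $1$-forms (each a rational function of a single variable times $\mathrm{d}$ of that variable), so again closed. This establishes the first bullet for $\Phi_{a,b}$, and by Theorem 1.30 (the quoted density statement) and the $\mathrm{Aut}(\mathbb{P}^2_\mathbb{C})\times\mathrm{Aut}(\mathbb{P}^2_\mathbb{C})$-invariance of the numerical-invariance condition, for a generic cubic map as well.

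For the second bullet I would argue that a foliation of degree $2$ defined by a closed rational $1$-form is never primitive. The cleanest route is: a closed rational $1$-form $\eta$ on $\mathbb{P}^2_\mathbb{C}$ defining $\mathcal{F}$ has polar divisor supported on an $\mathcal{F}$-invariant curve $C$; generically (and in both cases above) $C$ is a union of lines together with the line at infinity, and the multivalued first integral is a product of powers of linear forms (possibly with an exponential factor coming from a line of multiplicity $\geq 2$). One then exhibits explicitly a birational map lowering the degree — for $\mathcal{F}_{\omega_7}$, which is num. inv. by $\sigma$ and $\rho$ (see the Remark following Lemma \ref{Lem:transfbircubgen}) and has a pencil-of-lines structure after the monomial change bringing the three invariant lines to coordinate axes, a monomial (toric) birational map of the form $(x:y:z)\dashrightarrow(x^py^qz^r:\ldots)$ brings it to degree $\leq 1$, exactly as was done for $\mathcal{F}_{\Omega_1}$ in the proof of Proposition \ref{Pro:mod}. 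For $\mathcal{F}_{\omega_8}$ the structure $\mathrm{d}x/Q(x)+\mathrm{d}y/P(y)$ means $\mathcal{F}$ is (up to the automorphism putting the roots of $P,Q$ at $0,\infty$) of the form $\lambda\,\mathrm{d}x/x+\mu\,\mathrm{d}y/y$ or a confluent degeneration, i.e. a foliation of degree $\leq 1$ pulled back by a monomial map; unravelling this gives a birational $\phi$ with $\deg\phi^*\mathcal{F}_{\omega_8}\leq 1<2=\deg\mathcal{F}_{\omega_8}$, so $\mathcal{F}_{\omega_8}$ is non-primitive. More conceptually, one may invoke that foliations of degree $0$ and $1$ are exactly (a Zariski-dense part of) the closed-$1$-form foliations (stated in the excerpt via \cite{CCD}), and that any foliation given by a closed rational $1$-form with sufficiently reducible/multiple polar divisor is birationally equivalent to one of lower degree by contracting or blowing down the appropriate invariant components.

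The main obstacle is the non-primitivity claim, not the closedness: writing down a closed $1$-form is a one-line division, but certifying that \emph{every} foliation in the two families (for all admissible parameter values, not just the generic ones) admits a degree-dropping birational map requires organizing the closed-$1$-form foliations by the combinatorial type of their polar divisor (number of invariant lines, their multiplicities, presence of an exponential factor) and, for each type, producing the monomial or elementary quadratic map that lowers the degree — essentially a finite case analysis parallel to, but lighter than, the one in Section \ref{Sec:proofthmA}. I expect that in every case the relevant map is monomial in suitable coordinates and the degree drops to $0$ or $1$, so no genuinely new phenomenon occurs; the statement "$\mathcal{F}$ is non-primitive" then follows by definition since $\deg\mathcal{F}=2>\deg\phi^*\mathcal{F}$ for that $\phi$. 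One should also note, to cover "a generic cubic birational map", that the closure argument only gives the conclusion for $\mathcal{F}$ in the closure of the locus of foliations num. inv. by some $\Phi_{a,b}$; since numerical invariance under $\phi$ is a Zariski-closed condition on $(\phi,\mathcal{F})$ and primitivity/closed-$1$-form-definability are likewise constructible, semicontinuity transports the conclusion from the dense family $\mathscr{X}$ to a generic cubic map, which is exactly what Theorem \ref{Thm:degre3} asserts.
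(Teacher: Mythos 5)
Your proposal is correct and takes essentially the paper's route: closedness follows from dividing the separated-variables forms $\omega_7$ and $\omega_8$ by the product of their one-variable factors, and non-primitivity from a quadratic birational map in $\mathscr{O}(\sigma)$ that drops the degree (the paper simply computes $\sigma^*$ of the normalized $\omega_7$, obtaining a pencil of lines, and notes the same idea gives degree $1$ for $\omega_8$). The only adjustments needed are cosmetic: the map sending the roots of $P$ and $Q$ to $0$ and $\infty$ coordinatewise is not an automorphism of $\mathbb{P}^2_\mathbb{C}$ but a quadratic birational map (which is exactly what non-primitivity requires, so your conclusion stands), and the case analysis you flag as the main obstacle is essentially empty, since the constraints $a^2\neq 4$, $b\neq 0$ force $P$ and $Q$ to have simple roots and the monomial map $\sigma$ already works for every admissible $\omega_7$.
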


\begin{proof}
Let us establish the properties for $\mathcal{F}_{\omega_7}$; remark that $\mathcal{F}_{\omega_7}$ is given by
\[
\frac{\mathrm{d}x}{x(\alpha+\kappa x)}-\frac{\mathrm{d}y}{y(\alpha+\gamma y)}
\]
which is a closed rational $1$-form (remark that $(\vert\alpha\vert+\vert\kappa\vert)(\vert\alpha\vert+\vert\gamma\vert)\not=0$ since $\deg\mathcal{F}_{\omega_7}=2$). The foliation $\mathcal{F}_{\omega_7}$ is non-primitive: indeed one has 
\[
\sigma^*\omega_7=\frac{1}{x^2y^2}\Big((\alpha x+\kappa)\,\mathrm{d}x-(\alpha y+\gamma)\,\mathrm{d}y\Big)
\]
that defines a foliation of degree $0$.
\smallskip

The idea and result are the same for the foliations $\mathcal{F}_{\omega_8}$ (except that it gives a birational map $\phi$ such that $\deg\phi^*\mathcal{F}_{\omega_8}=~1$).
\end{proof}

Let us consider an element $\mathcal{F}$ of $\mathbb{F}(2;2)$ num. inv. under the action of a birational map of degree $\geq 3$; is~$\mathcal{F}$ defined by a closed $1$-form ?

\begin{rem}
The foliations $\mathcal{F}_{\omega_7}$ are contained in the orbit of the foliation $\mathcal{F}_{\eta'}$.
\end{rem}

\begin{rem}
Any map $\Phi_{a,b}$ can be written $\ell_1\sigma \ell_2\sigma \ell_3$ with
\[
\ell_2=(a_0y+a_1z:a_2y+a_3z:a_4x+a_5y+a_6z)
\]
(\emph{see} \cite[proof of Proposition 6.36]{CD}). Let us consider the birational map $\xi=\sigma \ell_2\sigma$ with $$\ell_2=(a_0y+a_1z:a_2y+a_3z:a_4x+a_5y+a_6z)\in\mathrm{Aut}(\mathbb{P}^2_\mathbb{C}).$$ As in Lemma \ref{Lem:transfbircubgen} there are two families of foliations $\mathscr{F}_1$, $\mathscr{F}_2$ of degree $2$, one that does not depend on the parameters of $\xi$ and the other one depending only on the parameters of $\xi$, such that $\xi^*\mathscr{F}_1$ and $\xi^*\mathscr{F}_2$ are of degree $2$. One question is the following: what is the intermediate degree ? A computation shows that for generic parameters $\deg\sigma^*\mathscr{F}_1=4$ and that $\deg\sigma^*\mathscr{F}_2=2$. This implies in particular that~$\mathcal{F}_{\omega_8}$ is num. inv. under the action of $\sigma$.
For $\mathscr{F}_1$ and $\mathcal{F}_{\omega_7}$ one has
\[
\xymatrix{& 4\ar[dr]&\\
2\ar[ur] & & 2}
\]
and for $\mathscr{F}_2$ and $\mathcal{F}_{\omega_8}$
\[
\xymatrix{2\ar[r]& 2\ar[r] & 2}
\]
\end{rem}

Let us now consider the "most degenerate" cubic birational map 
\[
\Psi\colon (x:y:z)\dashrightarrow \big(xz^2+y^3:yz^2:z^3\big).
\]

\begin{lem}\label{Lem:transfbircubdegen}
An element $\mathcal{F}$ of $\mathbb{F}(2;2)$ is num. inv. under the action of $\Psi$ if and only if $\mathcal{F}$ is given in affine chart by
\[
\omega_9=\big(-\alpha+\beta y+\gamma y^2\big)\,\mathrm{d}x+\big(\varepsilon-3\beta x+\kappa y-3\gamma xy+\lambda y^2\big)\mathrm{d}y
\]
where the parameters are such that $\deg\mathcal{F}_{\omega_9}=2$.
In particular $\mathcal{F}$ is transversely affine.
\end{lem}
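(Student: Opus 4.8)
The plan is to follow the template already used for Lemmas \ref{Lem:sigma} and \ref{Lem:rho}, adapting it to the cubic map $\Psi\colon(x:y:z)\dashrightarrow(xz^2+y^3:yz^2:z^3)$. First I would write an arbitrary foliation $\mathcal{F}\in\mathbb{F}(2;2)$ in homogeneous coordinates as a $1$-form $\omega$ of degree $3$ with the Euler relation $i_R\omega=0$ (so $18$ parameters, or one can use the same $q_1,q_2,q_3$ decomposition as in the previous lemmas). The key observation is that $\Psi$ is a birational map whose exceptional locus is concentrated on the line $z=0$: away from $z=0$, $\Psi$ restricts to an automorphism of $\mathbb{P}^2_\mathbb{C}\smallsetminus\{z=0\}$, so if $\Psi^*\omega=P\,\omega'$ with $\omega'$ homogeneous of degree $3$, the cofactor $P$ must be a power of $z$. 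Computing the degree, $\deg\Psi^*\omega = 3\cdot 3 = 9$, while $\deg\omega'=3$, so $P=z^6$. Thus $\mathcal{F}$ is num. inv. under $\Psi$ if and only if $z^6$ divides $\Psi^*\omega$.

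Next I would carry out the pullback computation: substitute $x\mapsto xz^2+y^3$, $y\mapsto yz^2$, $z\mapsto z^3$ into $\omega$, expand, and impose that the coefficients of all monomials not divisible by $z^6$ vanish. This is a routine but lengthy linear computation in the $18$ coefficients of $\omega$; it should cut the parameter space down so that, in the affine chart $z=1$, the surviving foliations are exactly those given by $\omega_9=(-\alpha+\beta y+\gamma y^2)\,\mathrm{d}x+(\varepsilon-3\beta x+\kappa y-3\gamma xy+\lambda y^2)\,\mathrm{d}y$. One should double-check genericity conditions ensuring $\deg\mathcal{F}_{\omega_9}=2$ (i.e. $\omega_9$ has no common factor and does not drop degree at infinity), but these are the mild open conditions already signalled by the phrase "the parameters are such that $\deg\mathcal{F}_{\omega_9}=2$".

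Finally, for the "transversely affine" assertion, I would exhibit an $\mathfrak{sl}(2;\mathbb{C})$-triplet with $\theta_2=0$, following the definition in \S2. Writing $\omega_9$ in the Pfaffian form $\mathrm{d}x + A(x,y)\,\mathrm{d}y$ is not immediate since $\omega_9$ is not triangular, but one notices that $\omega_9$ is of Riccati-type in a disguised way: grouping terms, $\omega_9 = (-\alpha+\beta y+\gamma y^2)\,\mathrm{d}x + \big(\varepsilon+\kappa y+\lambda y^2 -3x(\beta +\gamma y)\big)\,\mathrm{d}y$, and one checks that the coefficient of $x$ is $-3\,\mathrm{d}(\text{something})$ relative to $-\alpha+\beta y+\gamma y^2$; more concretely, dividing by $-\alpha+\beta y+\gamma y^2$ puts $\mathcal{F}_{\omega_9}$ in the form $\mathrm{d}x - \big(c(y)+b(y)x\big)\,\mathrm{d}y$ with no $x^2$ term, hence a (degenerate) Riccati equation, and one reads off $\theta_0=\omega_9/(-\alpha+\beta y+\gamma y^2)$, $\theta_1=b(y)\,\mathrm{d}y$, $\theta_2=0$ as in the Riccati example of \S2. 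The main obstacle is purely bookkeeping: organizing the large pullback computation so that the $z^6$-divisibility conditions are transparently equivalent to the stated normal form; once that is done, the transverse affine structure follows formally from the Riccati nature of $\omega_9$.
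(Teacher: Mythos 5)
Your overall strategy is exactly the one the paper relies on (the lemma is stated without proof, but the intended argument is the same pullback-and-divisibility computation as in Lemmas \ref{Lem:sigma} and \ref{Lem:rho}), and your reduction of the cofactor to a power of $z$ is correct: in the chart $z=1$ the map $\Psi$ is the polynomial automorphism $(x,y)\mapsto(x+y^3,y)$, so the only contracted curve is $z=0$. The transverse-affine argument at the end is also fine: $\omega_9$ is affine (linear) in $x$, so dividing by $-\alpha+\beta y+\gamma y^2$ gives $\theta_0=\mathrm{d}x+\big(c(y)+b(y)x\big)\mathrm{d}y$, and $\theta_0$, $\theta_1=b(y)\,\mathrm{d}y$, $\theta_2=0$ is an $\mathfrak{sl}(2;\mathbb{C})$-triplet.

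However, your degree bookkeeping is wrong, and it is precisely the step that determines which linear conditions you impose. If $\omega$ has homogeneous coefficients of degree $3$ and $\Psi$ has degree $3$, then $\Psi^*\omega=\sum A_i(\Psi)\,\mathrm{d}\Psi_i$ has coefficients of degree $3\cdot 3+(3-1)=11$, not $9$: you forgot the degree $\deg\Psi-1=2$ coming from $\mathrm{d}\Psi_i$. (Compare with Lemma \ref{Lem:sigma}, where the cofactor for the quadratic map $\sigma$ has degree $3\cdot 2+1-3=4$, i.e. $i+j+k=4$; also, generically $\deg\Psi^*\mathcal{F}=(2+2)\cdot 3-2=10$, i.e. coefficient degree $11$ with no common factor.) Hence numerical invariance is equivalent to $z^{8}$ dividing $\Psi^*\omega$ (exactly), not $z^{6}$: imposing only $z^{6}$-divisibility characterizes the strictly larger family of foliations with $\deg\Psi^*\mathcal{F}\leq 4$ and would not cut the parameter space down to the normal form $\omega_9$. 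With the exponent corrected to $8$ the rest of your plan goes through; indeed one can check that for the homogenization of $\omega_9$ the $\mathrm{d}x$- and $\mathrm{d}y$-coefficients of $\Psi^*\omega$ are divisible by $z^{9}$ and the $\mathrm{d}z$-coefficient by exactly $z^{8}$, so the quotient has degree-$3$ coefficients as required.
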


\begin{rem}
The map $\psi$ can be written $\ell_1\sigma \ell_2\sigma \ell_3\sigma \ell_4\sigma \ell_5\sigma\ell_4\sigma \ell_6\sigma \ell_2\sigma\ell_7$ with 
\begin{align*}
& \ell_1=(z-y:y:y-x), &&\ell_2=(y+z: z: x), && \ell_3=(-z:-y:x-y),\\
& \ell_4=(x+z: x: y), &&\ell_5=(-y: x-3y+z: x), && \ell_6=(-x: -y-z: x+y),\\
&\ell_7=(x+y:z-y:y). && &&
\end{align*}

As previously we consider the problem of the intermediate degrees; if $\mathcal{F}'=\ell_1^*\mathcal{F}$, a computation shows that for generic parameters
\[
\deg\sigma^*\mathscr{F}'=4, \quad\deg(\sigma \ell_2\sigma )^*\mathscr{F}'=3, \quad\deg(\sigma \ell_2\sigma \ell_3\sigma)^*\mathscr{F}'=5, 
\]
\[
\deg(\sigma \ell_2\sigma \ell_3\sigma \ell_4\sigma)^*\mathscr{F}'=3, \quad \deg(\sigma \ell_2\sigma \ell_3\sigma \ell_4\sigma \ell_5\sigma)^*\mathscr{F}'=5,
\]
\[
\deg(\sigma \ell_2\sigma \ell_3\sigma \ell_4\sigma \ell_5\sigma\ell_4\sigma )^*\mathscr{F}'=3, \quad\deg(\sigma \ell_2\sigma \ell_3\sigma \ell_4\sigma \ell_5\sigma\ell_4\sigma \ell_6\sigma)^*\mathscr{F}'=4,
\]
that is 
\[
\xymatrix{
 &  & & 5\ar[ddr] & & 5\ar[ddr] & & & \\
 & 4\ar[dr] & & & & & & 4\ar[ddr]& \\
 &  & 3\ar[uur] & & 3\ar[uur]& & 3\ar[ur]& & \\
2\ar[uur] & & & & & & & & 2}
\]
\end{rem}

We have not studied the quadratic foliations numerically invariant by (any) cubic birational transformation. It is reasonable to think that such foliations are transversely projective.

\bibliographystyle{plain}
\bibliography{bibliofeuilbir}

\begin{thebibliography}{1}

\bibitem{Brunella}
M.~Brunella.
\newblock {\em Birational geometry of foliations}.
\newblock Monograf\'\i as de Matem\'atica. Instituto de Matem\'atica Pura e
  Aplicada (IMPA), Rio de Janeiro, 2000.
\newblock Available electronically at
  http://www.impa.br/Publicacoes/Monografias/Abstracts/brunella.ps.

\bibitem{CCD}
F.~Cano, D.~Cerveau, and J.~D{\'e}serti.
\newblock {\em Th{\'e}orie {\'e}l{\'e}mentaire des feuilletages holomorphes
  singuliers}.
\newblock Echelles. Belin, 2013.

\bibitem{CF}
S.~Cantat and C.~Favre.
\newblock Sym\'etries birationnelles des surfaces feuillet\'ees.
\newblock {\em J. Reine Angew. Math.}, 561:199--235, 2003.

\bibitem{Castelnuovo}
G.~Castelnuovo.
\newblock Le trasformationi generatrici del gruppo cremoniano nel piano.
\newblock {\em Atti della R. Accad. delle Scienze di Torino}, 36:861--874,
  1901.

\bibitem{CD}
D.~Cerveau and J.~D{\'e}serti.
\newblock {\em Transformations birationnelles de petits degr\'e}, volume~19 of
  {\em Cours Sp\'ecialis\'es}.
\newblock Soci\'et\'e Math\'ematique de France, Paris, to appear.

\bibitem{CDGBM}
D.~Cerveau, J.~D{\'e}serti, D.~Garba~Belko, and R.~Meziani.
\newblock G\'eom\'etrie classique de certains feuilletages de degr\'e deux.
\newblock {\em Bull. Braz. Math. Soc. (N.S.)}, 41(2):161--198, 2010.

\bibitem{CLNLPT}
D.~Cerveau, A.~Lins-Neto, F.~Loray, J.~V. Pereira, and F.~Touzet.
\newblock Complex codimension one singular foliations and {G}odbillon-{V}ey
  sequences.
\newblock {\em Mosc. Math. J.}, 7(1):21--54, 166, 2007.

\end{thebibliography}

\end{document}